\def\@settitle{\begin{center}%
		\baselineskip14\p@\relax
		\normalfont\LARGE\scshape\bfseries
		%\uppercasenonmath\@title
		\@title
	\end{center}%
}
\def\subsection{\@startsection{subsection}{2}%
	\z@{.5\linespacing\@plus.7\linespacing}{.5\linespacing}%
	{\normalfont\large\bfseries}}
\def\subsubsection{\@startsection{subsubsection}{3}%
	\z@{.5\linespacing\@plus.7\linespacing}{.5\linespacing}%
	{\normalfont\itshape}}
\definecolor{darkblue}{rgb}{0.0, 0.0, 0.45}
\date{\today}
\theoremstyle{theorem}
\newtheorem{Thm}{Theorem}[section]
\newtheorem{Prop}[Thm]{Proposition}
\newtheorem{Lem}[Thm]{Lemma}
\newtheorem{Cor}[Thm]{Corollary}
\newtheorem{As}[Thm]{Assumption}
\newtheorem{Def}[Thm]{Definition}
\newtheorem{Rem}[Thm]{Remark}
\theoremstyle{remark}
\newtheorem{Ex}{Example}
\definecolor{green}{rgb}{0,.6,0}
\newcommand{\Max}{\max\limits_}
\newcommand{\Min}{\min\limits_}
\newcommand{\Sup}{\sup\limits_}
\newcommand{\Inf}{\inf\limits_}
\newcommand{\PP}{\mathds{P}}
\newcommand{\EE}{\mathds{E}}
\newcommand{\R}{\mathbb{R}}
\newcommand{\Ru}{\overline {\R}}
\newcommand{\N}{\mathbb{N}}
\newcommand{\borel}{\mathfrak{B}}
\newcommand{\ra}{\rightarrow}
\newcommand{\da}{\downarrow}
\newcommand{\ind}[1]{\mathds{1}_{#1}}
\newcommand{\Let}{\coloneqq}
\newcommand{\diff}{\mathrm{d}}
\newcommand{\ol}[1]{\overline{#1}}
\newcommand{\ul}[1]{\underline{#1}}
\newcommand{\wh}{\widehat}
\newcommand{\tr}{^\intercal}
\newcommand{\eps}{\varepsilon}
\newcommand{\X}{\mathbb{X}}
\newcommand{\amb}{\wh{\mathcal{P}}}
\newcommand{\data}{\wh{\xi}}
\newcommand{\Pem}{\wh{\PP}_N}
\newcommand{\Xiem}{\wh{\Xi}_N}
\newcommand{\Q}{\mathds{Q}}
\newcommand{\M}{\mathcal{M}}
\newcommand{\Lip}{\mathcal{L}}
\newcommand{\dir}[1]{\delta_{#1}}
\newcommand{\Wass}[2]{d_{\mathrm W}\big(#1,#2\big)}
\newcommand{\inner}[2]{\big \langle #1, #2 \big \rangle }
\DeclareMathOperator{\cl}{cl}
\newcommand{\set}[1]{\mathbb{#1}}
\newcommand{\normT}[1]{\left\| #1 \right\|_{\rm T}}
\newcommand{\J}{J^\star}
\newcommand{\xsaa}{\wh{x}_{\rm SAA}}
\newcommand{\Jsaa}{\wh{J}_{\rm SAA}}
\newcommand{\xdd}{\wh{x}_N} %{x_{\rm DD}^\star}
\newcommand{\Jdd}{\wh{J}_N} %{J_{\rm DD}^\star}
\newcommand{\xacc}{x^\star}
\newcommand{\ball}[2]{\mathbb{B}_{#2}(#1)} %{\mathbb{B}\big(#1; #2\big)}
\newcommand{\st}{\normalfont \text{s.t.}}
\newcommand{\Oo}{\mathcal{O}}
\newcommand{\indI}[1]{\chi_{#1}}
\newcommand{\One}{e}
\newcommand{\cvar}{{\rm CVaR}}
\title[Data-Driven Distributionally Robust Optimization Using the Wasserstein Metric]
{Data-Driven Distributionally Robust Optimization Using the Wasserstein Metric: Performance Guarantees and Tractable Reformulations}
\author{Peyman Mohajerin Esfahani and Daniel Kuhn}%
	\thanks{The authors are with the Delft Center for Systems and Control, TU Delft, The Netherlands ({\tt P.MohajerinEsfahani@tudelft.nl}), and the Risk Analytics and Optimization Chair, EPFL, Switzerland (\texttt{daniel.kuhn@epfl.ch}).}
\begin{document} 
\maketitle

\begin{abstract}
	We consider stochastic programs where the distribution of the uncertain parameters is only observable through a finite training dataset. Using the Wasserstein metric, we construct a ball in the space of (multivariate and non-discrete) probability distributions centered at the uniform distribution on the training samples, and we seek decisions that perform best in view of the worst-case distribution within this Wasserstein ball. The state-of-the-art methods for solving the resulting distributionally robust optimization problems rely on global optimization techniques, which quickly become computationally excruciating. In this paper we demonstrate that, under mild assumptions, the distributionally robust optimization problems over Wasserstein balls can in fact be reformulated as finite convex programs---in many interesting cases even as tractable linear programs. Leveraging recent measure concentration results, we also show that their solutions enjoy powerful finite-sample performance guarantees. Our theoretical results are exemplified in mean-risk portfolio optimization as well as uncertainty quantification.
\end{abstract}

%===============================================================================
\section{Introduction} 
\label{sec:introduction}
%===============================================================================
Stochastic programming is a powerful modeling paradigm for optimization under uncertainty. The goal of a generic single-stage stochastic program is to find a decision $x\in \R^n$ that minimizes an expected cost~$\EE^\PP[h(x,\xi)]$, where the expectation is taken with respect to the distribution $\PP$ of the {continuous random vector} $\xi\in\R^m$. However, classical stochastic programming is challenged by the large-scale decision problems encountered in today's increasingly interconnected world. First, the distribution $\PP$ is never observable but must be inferred from data. However, if we calibrate a stochastic program to a given dataset and evaluate its optimal decision on a different dataset, then the resulting out-of-sample performance is often disappointing---even if the two datasets are generated from the same distribution. This phenomenon is termed the {\em optimizer's curse} and is reminiscent of overfitting effects in statistics \cite{ref:SmiRob-06}. Second, in order to evaluate the objective function of a stochastic program for a fixed decision $x$, we need to compute a multivariate integral, which is \#P-hard  even if $h(x,\xi)$ constitutes the positive part of an affine function, while $\xi$ is uniformly distributed on the unit hypercube \cite[Corollary~1]{ref:GraKunWie-15}.

Distributionally robust optimization is an alternative modeling paradigm, where the objective is to find a decision $x$ that minimizes the {\em worst-case} expected cost $\sup_{\Q \in \mathcal P} \EE^\Q [ h(x,\xi)]$. Here, the worst-case is taken over an ambiguity set $\mathcal P$, that is, a family of distributions characterized through certain known properties of the unknown data-generating distribution $\PP$. Distributionally robust optimization problems have been studied since Scarf's seminal treatise on the ambiguity-averse newsvendor problem in 1958 \cite{ref:Scarf-58}, but the field has gained thrust only with the advent of modern robust optimization techniques in the last decade \cite{ref:BenElNem-09,ref:BertSim-04}. Distributionally robust optimization has the following striking benefits. First, adopting a worst-case approach regularizes the optimization problem and thereby mitigates the optimizer's curse characteristic for stochastic programming. Second, distributionally robust models are often tractable even though the corresponding stochastic model with the true data-generating distribution (which is generically continuous) are $\#P$-hard. So even if the data-generating distribution was known, the corresponding stochastic program could not be solved efficiently.

The ambiguity set $\mathcal P$ is a key ingredient of any distributionally robust optimization model. A good ambiguity set should be rich enough to contain the true data-generating distribution with high confidence. On the other hand, the ambiguity set should be small enough to exclude pathological distributions, which would incentivize overly conservative decisions. The ambiguity set should also be easy to parameterize from data, and---ideally---it should facilitate a tractable reformulation of the distributionally robust optimization problem as a structured mathematical program that can be solved with off-the-shelf optimization software.

Distributionally robust optimization models where $\xi$ has finitely many realizations are reviewed in~\cite{ref:BenHer-13, ref:BertSAA-14, ref:PosHertMel-14}. This paper focuses on situations where $\xi$ can have a continuum of realizations. In this setting, the existing literature has studied three types of ambiguity sets. Moment ambiguity sets contain all distributions that satisfy certain moment constraints, see for example \cite{ref:DelYe-10,ref:GohSim-10,ref:WieKuhSim-14} or the references therein. An attractive alternative is to define the ambiguity set as a ball in the space of probability distributions by using a probability distance function such as the Prohorov metric~\cite{ref:ErdoIyen-06}, the Kullback-Leibler divergence \cite{ref:JiaGua-14, ref:HuHong-13}, or the Wasserstein metric~\cite{ref:PflWoz-07,ref:Woz-12} etc. Such metric-based ambiguity sets contain all distributions that are close to a {\em nominal} or {\em most likely} distribution with respect to the prescribed probability metric. By adjusting the radius of the ambiguity set, the modeler can thus control the degree of conservatism of the underlying optimization problem. If the radius drops to zero, then the ambiguity set shrinks to a singleton that contains only the nominal distribution, in which case the distributionally robust problem reduces to an ambiguity-free stochastic program. In addition, ambiguity sets can also be defined as confidence regions of goodness-of-fit tests~\cite{ref:BertSAA-14}.

%Distributionally robust optimization models where $\xi$ has finitely many realizations are reviewed in~\cite{ref:BenHer-13, ref:BertSAA-14, ref:PosHertMel-14}. This paper focuses exclusively on situations where $\xi$ can have a continuum of realizations. In this setting, the existing literature on distributionally robust optimization has studied moment ambiguity sets $\mathcal P$, which contain all distributions that satisfy certain moment constraints, see for example \cite{ref:DelYe-10,ref:GohSim-10,ref:WieKuhSim-14} and the references therein. An attractive alternative is to define $\mathcal P$ as a ball in the space of probability distributions by using a probability distance function such as the Prohorov metric~\cite{ref:ErdoIyen-06}, the Kullback-Leibler divergence \cite{ref:JiaGua-14, ref:HuHong-13}, or the Wasserstein metric~\cite{ref:PflWoz-07,ref:Woz-12} etc. Such metric-based ambiguity sets contain all distributions that are close to a {\em nominal} or {\em most likely} distribution with respect to the prescribed probability metric. By adjusting the radius of the ambiguity set, the modeler can thus control the degree of conservatism of the underlying optimization problem. If the radius drops to zero, then the ambiguity set shrinks to a singleton that contains only the nominal distribution, in which case the distributionally robust problem reduces to an ambiguity-free stochastic program.

In this paper we study distributionally robust optimization problems with a {\em Wasserstein ambiguity set} centered at the uniform distribution $\Pem$ on $N$ independent and identically distributed training samples. The Wasserstein distance of two distributions $\Q_1$ and $\Q_2$ can be viewed as the minimum transportation cost for moving  the probability mass from $\Q_1$ to $\Q_2$, and the Wasserstein ambiguity set contains all (continuous or discrete) distributions that are sufficiently close to the (discrete) empirical distribution $\Pem$ with respect to the Wasserstein metric. Modern measure concentration results from statistics guarantee that the unknown data-generating distribution $\PP$ belongs to the Wasserstein ambiguity set around $\Pem$ with confidence $1-\beta$ if its radius is a sublinearly growing function of $\log(1/\beta)/N$ \cite{ref:Boll-07, ref:FouGui-14}. The optimal value of the distributionally robust problem thus provides an upper confidence bound on the achievable out-of-sample cost.

While Wasserstein ambiguity sets offer powerful out-of-sample performance guarantees and enable the decision maker to control the model's conservativeness, moment-based ambiguity sets appear to display better tractability properties. Specifically, there is growing evidence that distributionally robust models with moment ambiguity sets are more tractable than the corresponding stochastic models {because the intractable high-dimensional integrals in the objective function are replaced with tractable (generalized) moment problems} \cite{ref:DelYe-10,ref:GohSim-10,ref:WieKuhSim-14}. In contrast, distributionally robust models with Wasserstein ambiguity sets are believed to be harder than their stochastic counterparts \cite{ref:PflPich-14}. Indeed, the state-of-the-art method for computing the worst-case expectation over a Wasserstein ambiguity set~$\mathcal P$ relies on global optimization techniques. Exploiting the fact that the extreme points of $\mathcal P$ are discrete distributions with a fixed number of atoms~\cite{ref:Woz-12}, one may reformulate the original worst-case expectation problem as a finite-dimensional non-convex program, which can be solved via ``difference of convex programming" methods, see \cite{ref:Woz-12} or \cite[Section~7.1]{ref:PflPich-14}. However, the computational effort is reported to be considerable, and there is no guarantee to find the global optimum. {Nevertheless, tractability results are available for special cases. Specifically, the worst case of a convex law-invariant risk measure with respect to a Wasserstein ambiguity set $\mathcal P$ reduces to the sum of the nominal risk and a regularization term whenever $h(x,\xi)$ is affine in $\xi$ and $\mathcal P$ does not include any support constraints \cite{ref:Woz-14}. Moreover, while this paper was under review we became aware of the PhD thesis~\cite{ref:Zhao-14}, which reformulates a distributionally robust two-stage unit commitment problem over a Wasserstein ambiguity set as a semi-infinite linear program, which is subsequently solved using a Benders decomposition algorithm.}

The main contribution of this paper is to demonstrate that the worst-case expectation over a Wasserstein ambiguity set can in fact be computed efficiently via convex optimization techniques for numerous loss functions of practical interest. Furthermore, we propose an efficient procedure for constructing an extremal distribution that attains the worst-case expectation---provided that such a distribution exists. Otherwise, we construct a sequence of distributions that attain the worst-case expectation asymptotically. As a by-product, our analysis shows that many interesting distributionally robust optimization problems with Wasserstein ambiguity sets can be solved in polynomial time. We also investigate the out-of-sample performance of the resulting optimal decisions---both theoretically and experimentally---and analyze its dependence on the number of training samples. We highlight the following main contributions of this paper.
\begin{itemize}
\item We prove that the worst-case expectation of an uncertain loss $\ell(\xi)$ over a Wasserstein ambiguity set coincides with the optimal value of a finite-dimensional convex program if $\ell(\xi)$ constitutes a pointwise maximum of finitely many concave functions. Generalizations to convex functions or to sums of maxima of concave functions are also discussed. We conclude that worst-case expectations can be computed efficiently to high precision via modern convex optimization algorithms.
\item We describe a supplementary finite-dimensional convex program whose optimal (near-optimal) solutions can be used to construct exact (approximate) extremal distributions for the infinite-dimensional worst-case expectation problem.
\item We show that the worst-case expectation reduces to the optimal value of an explicit linear program if the $1$-norm or the $\infty$-norm is used in the definition of the Wasserstein metric and if $\ell(\xi)$ belongs to any of the following function classes: (1) a pointwise maximum or minimum of affine functions; (2) the indicator function of a closed polytope or the indicator function of the complement of an open polytope; (3) the optimal value of a parametric linear program whose cost or right-hand side coefficients depend linearly on~$\xi$.
\item Using recent measure concentration results from statistics, we demonstrate that the optimal value of a distributionally robust optimization problem over a Wasserstein ambiguity set provides an upper confidence bound on the out-of-sample cost of the worst-case optimal decision. We validate this theoretical performance guarantee in numerical tests.
\end{itemize}

If the uncertain parameter vector $\xi$ is confined to a fixed finite subset of $\R^m$, then the worst-case expectation problems over Wasserstein ambiguity sets simplify substantially and can often be reformulated as tractable conic programs by leveraging ideas from robust optimization. An elegant second-order conic reformulation has been discovered, for instance, in the context of distributionally robust regression analysis \cite{ref:MehZhan-14}, and a comprehensive list of tractable reformulations of distributionally robust risk constraints for various risk measures is provided in \cite{ref:PosHertMel-14}. Our paper extends these tractability results to the practically relevant case where $\xi$ has uncountably many possible realizations---without resorting to space tessellation or discretization techniques that are prone to the curse of dimensionality.

When $\ell(\xi)$ is linear and the distribution of $\xi$ ranges over a Wasserstein ambiguity set without support constraints, one can derive a concise closed-form expression for the worst-case risk of $\ell(\xi)$ for various convex risk measures \cite{ref:Woz-14}. However, these analytical solutions come at the expense of a loss of generality. We believe that the results of this paper may pave the way towards an efficient computational procedure for evaluating the worst-case risk of $\ell(\xi)$ in more general settings where the loss function may be non-linear and $\xi$ may be subject to support constraints.

Among all metric-based ambiguity sets studied to date, the Kullback-Leibler ambiguity set has attracted most attention from the robust optimization community. It has first been used in financial portfolio optimization to capture the distributional uncertainty of asset returns with a Gaussian nominal distribution \cite{ref:ElOksOus-03}. Subsequent work has focused on Kullback-Leibler ambiguity sets for discrete distributions with a fixed support, which offer additional modeling flexibility without sacrificing computational tractability \cite{ref:Cal-07,ref:BenHer-13}. It is also known that distributionally robust chance constraints involving a generic Kullback-Leibler ambiguity set are equivalent to the respective classical chance constraints under the nominal distribution but with a rescaled violation probability \cite{ref:JiaGua-14,ref:HuHongSo-13}. Moreover, closed-form counterparts of distributionally robust expectation constraints with Kullback-Leibler ambiguity sets have been derived in \cite{ref:HuHong-13}. 

{However, Kullback-Leibler ambiguity sets typically fail to represent confidence sets for the unknown distribution $\PP$. To see this, assume that $\PP$ is absolutely continuous with respect to the Lebesgue measure and that the ambiguity set is centered at the discrete empirical distribution $\Pem$. Then, any distribution in a Kullback-Leibler ambiguity set around $\Pem$ must assign positive probability mass to each training sample. As $\PP$ has a density function, it must therefore reside outside of the Kullback-Leibler ambiguity set irrespective of the training samples. Thus, Kullback-Leibler ambiguity sets around $\Pem$ contain $\PP$ with probability~0. In contrast, Wasserstein ambiguity sets centered at $\Pem$ contain discrete as well as continuous distributions and, if properly calibrated, represent meaningful confidence sets for~$\PP$. We will exploit this property in Section~\ref{sec:wass} to derive finite-sample guarantees. A comparison and critical assessment of various metric-based ambiguity sets is provided in~\cite{ref:Shap-15}. Specifically, it is shown that worst-case expectations over Kullback-Leibler and other divergence-based ambiguity sets are law invariant. In contrast, worst-case expectations over Wasserstein ambiguity sets are not. The law invariance can be exploited to evaluate worst-case expectations via the sample average approximation.}

The models proposed in this paper fall within the scope of data-driven distributionally robust optimization \cite{ref:ErdoIyen-06,ref:ChehWeb-10, ref:BertSAA-14,ref:HanKuh-13}. Closest in spirit to our work is the robust sample average approximation \cite{ref:BertSAA-14}, which seeks decisions that are robust with respect to the ambiguity set of all distributions that pass a prescribed statistical hypothesis test. Indeed, the distributions within the Wasserstein ambiguity set could be viewed as those that pass a multivariate goodness-of-fit test in light of the available training samples. {This amounts to interpreting the Wasserstein distance between the empirical distribution $\Pem$ and a given hypothesis $\Q$ as a test statistic and the radius of the Wasserstein ambiguity set as a threshold that needs to be chosen in view of the test's desired significance level $\beta$. The Wasserstein distance has already been used in tests for normality~\cite{ref:Barr-99} and to devise nonparametric homogeneity tests~\cite{ref:Ram15}.}

The rest of the paper proceeds as follows. Section~\ref{sec:prob} sketches a generic framework for data-driven distributionally robust optimization, while Section~\ref{sec:wass}  introduces our specific approach based on Wasserstein ambiguity sets and establishes its out-of-sample performance guarantees. In Section~\ref{sec:dist} we demonstrate that many worst-case expectation problems over Wasserstein ambiguity sets can be reduced to finite-dimensional convex programs, and we develop a systematic procedure for constructing worst-case distributions. Explicit linear programming reformulations of distributionally robust single and two-stage stochastic programs as well as uncertainty quantification problems are derived in Section~\ref{sec:cases}. Section~\ref{sec:exten} extends the scope of the basic approach to broader classes of objective functions, and Section~\ref{sec:num} reports on numerical results.

\paragraph{\bf Notation} 
We denote by $\R_+$ the non-negative and by $\Ru \Let \R \cup \{-\infty,\infty \}$ the extended reals. Throughout this paper, we adopt the conventions of extended arithmetics, whereby $\infty\cdot0 = 0\cdot\infty = {0 / 0 } = 0$ and $\infty - \infty = -\infty + \infty = 1/0 = \infty$. The inner product of two vectors $a,b\in\R^m$ is denoted by $\inner{a}{b} \Let a\tr b$. Given a norm $\|\cdot\|$ on $\R^m$, the dual norm is defined through $\|z\|_* \Let \sup_{\|\xi\|\le 1} \inner{z}{\xi}$. A function $f:\R^m\ra \Ru$ is proper if $f(\xi)<+\infty$ for at least one $\xi$ and $f(\xi)>-\infty$ for every $\xi$ in $\R^m$. The conjugate of $f$ is defined as $f^*(z) \Let \sup_{\xi \in \R^m} \inner{z}{\xi} - f(\xi)$. Note that conjugacy preserves properness. For a set $\Xi\subseteq \R^m$, the indicator function $\ind{\Xi}$ is defined through $\ind{\Xi}(\xi)=1$ if $\xi\in \Xi$; $=0$ otherwise. Similarly, the characteristic function $\chi_\Xi$ is defined via $\chi_\Xi(\xi)=0$ if $\xi\in \Xi$; $=\infty$ otherwise. The support function of $\Xi$ is defined as $\sigma_{\Xi}(z) \Let \sup_{\xi \in \Xi} \inner{z}{\xi}$. It coincides with the conjugate of $\chi_\Xi$. We denote by $\dir{\xi}$ the Dirac distribution concentrating unit mass at $\xi\in\R^m$. The product of two probability distributions $\PP_1$ and $\PP_2$ on $\Xi_1$ and $\Xi_2$, respectively, is the distribution $\PP_1\otimes\PP_2 $ on $\Xi_1\times \Xi_2$. The $N$-fold product of a distribution $\PP$ on $\Xi$ is denoted by $\PP^N$, which represents a distribution on the Cartesian product space $\Xi^N$. {Finally, we set the expectation of $\ell:\Xi\ra\Ru$ under $\PP$ to $\EE^\PP[\ell(\xi)] = \EE^\PP\big[\max\{\ell(\xi),0\}\big] + \EE^\PP\big[\min\{\ell(\xi),0\}\big]$, which is well-defined by the conventions of extended arithmetics. % \cite[Chapter 14.F]{ref:Rockafellar-10}.
}

%===============================================================================
\section{Data-Driven Stochastic Programming}
\label{sec:prob}
%===============================================================================

    Consider the stochastic program
        \begin{align}
        \label{Ex-true}
            \J \Let \inf_{x \in \X} \left\{ \EE^\PP \big[ h(x,\xi) \big] =  \int_{\Xi} h(x,\xi)\, \PP(\diff \xi)\right\}
        \end{align}
    with feasible set $\X \subseteq \R^n$, uncertainty set $\Xi\subseteq \R^m$ and loss function $h : \R^n \times \R^m \ra \Ru$. The loss function depends both on the decision vector $x\in\R^n$ and the random vector $\xi\in\R^m$, whose distribution $\PP$ is supported on $\Xi$. Problem~\eqref{Ex-true} can be viewed as the first-stage problem of a two-stage stochastic program, where $h(x,\xi)$ represents the optimal value of a subordinate second-stage problem \cite{ref:Shap&Dent&Rusz}. Alternatively, problem~\eqref{Ex-true} may also be interpreted as a generic learning problem in the spirit of \cite{ref:Vapnik}. 
	
	Unfortunately, in most situations of practical interest, the distribution $\PP$ is not precisely known, and therefore we miss essential information to solve problem \eqref{Ex-true} {\em exactly}. However, $\PP$ is often partially observable through a finite set of $N$ independent samples, {\em e.g.}, past realizations of the random vector $\xi$. We denote the training dataset comprising these samples by $\Xiem \Let \{\data_i\}_{i\le N} \subseteq \Xi$. We emphasize that---before its revelation---the dataset $\Xiem$ can be viewed as a random object governed by the distribution $\PP^N$ supported on $\Xi^N$.

%   \note{I would suggest to highlight here that we perceive a data-driven solution as a pair $(\xdd,\Jdd)$ where $\Jdd$ is meant to be a certificate of the out-of-sample performance of the solution $\xdd$.}
    
    A {\em data-driven solution} for problem \eqref{Ex-true} is a feasible decision $\xdd \in \X$ that is constructed from the training dataset $\Xiem$. Throughout this paper, we notationally suppress the dependence of $\xdd$ on the training samples in order to avoid clutter. Instead, we reserve the superscript `\,$\widehat{~}$\,' for objects that depend on the training data and thus constitute random objects governed by the product distribution $\PP^N$. The {\em out-of-sample performance} of $\xdd$ is defined as $\EE^\PP \big[ h(\xdd,\xi) \big]$ and can thus be viewed as the expected cost of $\xdd$ under a new sample $\xi$ that is independent of the training dataset. As $\PP$ is unknown, however, the exact out-of-sample performance cannot be evaluated in practice, and the best we can hope for is to establish {\em performance guarantees} in the form of tight bounds. The feasibility of $\xdd$ in~\eqref{Ex-true} implies $\J\leq \EE^\PP \big[ h(\xdd,\xi) \big]$, but this lower bound is again of limited use as $\J$ is unknown and as our primary concern is to bound the costs from above. Thus, we seek data-driven solutions $\xdd$ with performance guarantees of the type
        \begin{align}
        	\label{out-of-sample}
            \PP^N\Big\{ \Xiem ~:~ \EE^\PP \big[ h(\xdd,\xi) \big] \leq \Jdd \Big\}\geq 1-\beta,
        \end{align}
	where $\Jdd$ constitutes an upper bound that may depend on the training dataset, and $\beta\in (0,1)$ is a \emph{significance parameter} with respect to the distribution $\PP^N$, which governs both $\xdd$ and $\Jdd$. Hereafter we refer to $\Jdd$ as a {\em certificate} for the out-of-sample performance of $\xdd$ and to the probability on the left-hand side of~\eqref{out-of-sample} as its {\em reliability}. Our ideal goal is to find a data-driven solution with the lowest possible out-of-sample performance. This is impossible, however, as $\PP$ is unknown, and the out-of-sample performance cannot be computed. We thus pursue the more modest but achievable goal to find a data-driven solution with a low certificate and a high reliability.

    A natural approach to generate data-driven solutions $\xdd$ is to approximate $\PP$ with the discrete empirical probability distribution
        \begin{align}
        \label{Pem}
            \Pem \Let {1 \over N} \sum_{i = 1}^{N} \dir{\data_i},
        \end{align}
    that is, the uniform distribution on $\Xiem$. This amounts to approximating the original stochastic program~\eqref{Ex-true} with the \emph{sample-average approximation} (SAA) problem
        \begin{align}
        \label{Ex_emp}
            \Jsaa \Let \inf_{x \in \X} \left\{ \EE^{\Pem} \big[ h(x,\xi) \big] =  {1 \over N} \sum_{i = 1}^{N} h(x, \data_i)\right\} .
        \end{align}
	
	If the feasible set $\X$ is compact and the loss function is uniformly continuous in $x$ across all $\xi\in\Xi$, then the optimal value and optimal solutions of the SAA problem \eqref{Ex_emp} converge almost surely to their counterparts of the true problem \eqref{Ex-true} as $N$ tends to infinity \cite[Theorem~5.3]{ref:Shap&Dent&Rusz}. Even though finite sample performance guarantees of the type \eqref{out-of-sample} can be obtained under additional assumptions such as Lipschitz continuity of the loss function (see {\em e.g.},~\cite[Theorem~1]{ref:Shap:SAA-05}), the SAA problem has been conceived primarily for situations where the distribution $\PP$ is known and additional samples can be acquired cheaply via random number generation. However, the optimal solutions of the SAA problem tend to display a poor out-of-sample performance in situations where $N$ is small and where the acquisition of additional samples would be costly. %{In this case $\Jsaa$ constitutes an optimistically biased estimator for $\J$, that is, $\EE^{\PP^N}[\Jsaa]\leq \J$ \cite[Theorem~1]{Mak_et_al_ORL24_1999}. This suggests that the certificate $\Jsaa$ typically fails to be a useful performance certificate with confidence $1-\beta> 50\%$.}
	
%	\note{Any comments on this: the value $\Jsaa$ can be provably (using Jensen inequality) shown to be always optimistic value than the optimal  solution $\J$ of the original stochastic program \eqref{Ex-true}. As such, the value $\Jsaa$ clearly fails to be a performance certificate. }

	In this paper we address problem \eqref{Ex-true} with an alternative approach that explicitly accounts for our ignorance of the true data-generating distribution $\PP$, and that offers attractive performance guarantees even when the acquisition of additional samples from $\PP$ is impossible or expensive. Specifically, we use $\Xiem$ to design an ambiguity set $\amb_N$ containing all distributions that could have generated the training samples with high confidence. { This ambiguity set enables us to define the certificate $\Jdd$ as the optimal value of a distributionally robust optimization problem that minimize the {\em worst-case} expected cost.}
	%formulate the distributionally robust optimization problem
            \begin{align}
            \label{DRO}
                \Jdd \Let 
                \inf\limits_{x \in \X} \sup\limits_{\Q \in \amb_N} \EE^\Q \big[ h(x,\xi) \big] 
            \end{align}
            %\note{The proposed data-driven solution pair in this study is $(\xdd,\Jdd) = (\xdro,\Jdro)$, where $\xdro$ is an optimizer of the distributionally robust optimization problem~\eqref{DRO}.}
            {Following \cite{ref:PflWoz-07}, we construct $\amb_N$ as a ball around the empirical distribution~\eqref{Pem} with respect to the Wasserstein metric. In the remainder of the paper we will demonstrate that the optimal value $\Jdd$ as well as any optimal solution $\xdd$ (if it exists) of the distributionally robust problem~\eqref{DRO} satisfy the following conditions.}
            %We will construct $\amb_N$ from the dataset $\Xiem$ so that the following requirements are met.
	\begin{enumerate}[label=(\roman*), itemsep = 1mm, topsep = 1mm]
        \item \label{cond-conf} {\bf Finite sample guarantee:} {For a carefully chosen size of the ambiguity set, the certificate $\Jdd$ provides a $1-\beta$ confidence bound of the type~\eqref{out-of-sample} on the out-of-sample performance of $\xdd$.} %The ambiguity set $\amb_N$ contains the true distribution $\PP$ with probability at least $1-\beta$, where $\beta\in(0,1)$ is a prescribed confidence~level. \note{modify to account for the out-of-sample \eqref{out-of-sample} when the solution pair is set to $(\xdd,\Jdd) = (\xdro,\Jdro)$.}
        
        \item \label{cond-asym} {\bf Asymptotic consistency:} {As $N$ tends to infinity, the certificate $\Jdd$ and the data-driven solution $\xdd$ converge---in a sense to be made precise below---to the optimal value $\J$ and an optimizer $x^\star$ of the stochastic program~\eqref{Ex-true}, respectively.} %The ambiguity set $\amb_N$ converges---in a sense to be made precise below---to the singleton set $\{\PP\}$ as the sample size $N$ tends to infinity. \note{modify to almost sure convergence of the certificate to the optimal solution $\Jdd \ra \J$ as well as any limit point of the distributionally robust optimization solution to the true solution of \eqref{Ex-true}.}
        
        \item \label{cond-trac} {\bf Tractability: } {For many loss functions $h(x,\xi)$ and sets $\X$, the distributionally robust problem \eqref{DRO} is computationally tractable and admits a reformulation reminiscent of the SAA problem~\eqref{Ex_emp}.} %The distributionally robust problem \eqref{DRO} belongs to the same complexity class as the SAA problem \eqref{Ex_emp} irrespective of the confidence level $\beta$. 
	\end{enumerate}

{Conditions~\ref{cond-conf}--\ref{cond-trac} have been identified in~\cite{ref:BertSAA-14} as desirable properties of data-driven solutions for stochastic programs. Precise statements of these conditions will be provided in the remainder.  }
%By condition~\ref{cond-conf}, the probability of the event $\{\Xiem:\PP\notin\amb_N\}$ with respect to the distribution $\PP^N$ is bounded above by the confidence level $\beta$. This implies that the performance guarantee \eqref{out-of-sample} holds if $\xdd$ is any minimizer of \eqref{DRO} and $\Jdd=\Jdro$. Condition~\ref{cond-asym} suggests that the distributionally robust optimization problem~\eqref{DRO} becomes less conservative as the size of the training dataset increases, and that its optimal solutions converge to those of the original problem~\eqref{Ex-true}. Finally, condition~\ref{cond-trac} stipulates that the distributionally robust optimization problem~\eqref{DRO} is at most polynomially larger than the corresponding SAA problem~\eqref{Ex_emp} for any $N$ and independently of $\beta$.
In Section~\ref{sec:wass} we will use the Wasserstein metric to construct ambiguity sets of the type $\amb_N$ satisfying the conditions \ref{cond-conf} and \ref{cond-asym}. In Section~\ref{sec:dist}, we will demonstrate that these ambiguity sets also fulfill the tractability condition~\ref{cond-trac}. {We see this last result as the main contribution of this paper because the state-of-the-art method for solving distributionally robust problems over Wasserstein ambiguity sets relies on global optimization algorithms~\cite{ref:PflPich-14}.}

%===============================================================================
\section{Wasserstein Metric and Measure Concentration} 
\label{sec:wass}
%===============================================================================
{  
    Probability metrics represent distance functions on the space of probability distributions. One of the most widely used examples is the Wasserstein metric, which is defined on the space $\M(\Xi)$ of all probability distributions $\Q$ supported on $\Xi$ with $\EE^\Q\big[\|\xi\|\big] = \int_\Xi \|\xi\| \,\Q(\diff\xi)<\infty$. 

%    \begin{Def}[Wasserstein Metric]
%    \label{def:wass}
%        Let $\Lip$ be the space of all Lipschitz continuous functions $f:\Xi\ra\R$ with Lipschitz constant no larger than $1$. Then, the Wasserstein metric $d_{\rm W} : \M(\Xi)\times \M(\Xi)\ra\R$ is defined through
%        \begin{align*}
%            \Wass{\Q_1}{\Q_2} = \sup_{f \in \Lip} \left(\int_{\Xi} f(\xi) \,\Q_1(\diff \xi) -  \int_{\Xi} f(\xi)\, \Q_2(\diff \xi)\right) \qquad \forall \Q_1, \Q_2 \in \M(\Xi).
%        \end{align*}
%    \end{Def}

%By construction, two distributions $\Q_1$ and $\Q_2$ are close to each other with respect to the Wasserstein metric if and only if all functions with uniformly bounded slopes have similar integrals under $\Q_1$ and $\Q_2$. 

	\begin{Def}[Wasserstein metric \cite{ref:KantRub-58}]
		\label{def:wass}
		 The Wasserstein metric $d_{\rm W} : \M(\Xi)\times \M(\Xi)\ra\R$ is defined via
		\begin{align*}
		\Wass{\Q_1}{\Q_2} \Let \inf \left\{ \int_{\Xi^2} \| \xi_1 -  \xi_2 \|\,  \Pi(\diff \xi_1, \diff \xi_2) ~: \begin{array}{l}\mbox{$\Pi$ is a joint distribution of $\xi_1$ and $\xi_2$} \\ \mbox{with marginals $\Q_1$ and $\Q_2$, respectively}\! \end{array}\right\}
		\end{align*}
		for all distributions $\Q_1,\Q_2\in \M(\Xi)$, where $\|\cdot\|$ represents an arbitrary norm on $\R^m$.
	\end{Def}
	
%The objective function in the infinite linear program in Definition~\ref{def:wass} is a generic norm on the space $\Xi$, which is also known as the \emph{ground metric}. 
The decision variable $\Pi$ can be viewed as a \emph{transportation plan} for moving a mass distribution described by $\Q_1$ to another one described by $\Q_2$. Thus, the Wasserstein distance between $\Q_1$ and $\Q_2$ represents the cost of an optimal mass transportation plan, where the norm $\|\cdot\|$ encodes the transportation costs. We remark that a generalized $p$-Wasserstein metric for $p\geq 1$ is obtained by setting the transportation cost between $\xi_1$ and $\xi_2$ to $\|\xi_1-\xi_2\|^p$. In this paper, however, we focus exclusively on the $1$-Wasserstein metric of Definition~\ref{def:wass}, which is sometimes also referred to as the Kantorovich metric.

We will sometimes also need the following dual representation of the Wasserstein metric.

\begin{Thm}[Kantorovich-Rubinstein~\cite{ref:KantRub-58}]
\label{thm:KantorovichRubinstein}
For any distributions $\Q_1, \Q_2\in\mathcal M(\Xi)$ we have
\begin{align*}
%\label{Wass-dual}
	\Wass{\Q_1}{\Q_2} = \sup_{f \in \Lip} \Big\{ \int_{\Xi} f(\xi) \,\Q_1(\diff \xi) -  \int_{\Xi} f(\xi)\, \Q_2(\diff \xi)\Big\},
\end{align*}
where $\Lip$ denotes the space of all Lipschitz functions with $|f(\xi)-f(\xi')|\leq \|\xi-\xi'\|$ for all $\xi,\xi'\in\Xi$. %represents the space of all Lipschitz continuous functions on $\Xi$ with respect to the norm $\|\cdot\|$ whose Lipschitz constant is at most~1.
\end{Thm}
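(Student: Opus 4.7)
The plan is to establish the identity by proving the two inequalities ``$\ge$'' and ``$\le$'' separately. The ``$\ge$'' inequality (that is, $\sup_{f\in\Lip}\{\cdot\}\le\Wass{\Q_1}{\Q_2}$) follows from a one-line coupling estimate, while the reverse ``$\le$'' inequality relies on the Lagrangian duality of the infinite-dimensional transportation LP that defines $\Wass{\Q_1}{\Q_2}$, combined with a Lipschitz regularization that collapses the two dual multipliers into a single $1$-Lipschitz function.

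For the first (easy) inequality, I would fix any $f\in\Lip$ and any coupling $\Pi$ of $(\Q_1,\Q_2)$. The marginal property of $\Pi$ together with the pointwise Lipschitz bound $|f(\xi_1)-f(\xi_2)|\le\|\xi_1-\xi_2\|$ gives
\begin{align*}
\int_\Xi f(\xi)\,\Q_1(\diff\xi) - \int_\Xi f(\xi)\,\Q_2(\diff\xi)
&= \int_{\Xi^2}\bigl(f(\xi_1)-f(\xi_2)\bigr)\,\Pi(\diff\xi_1,\diff\xi_2) \\
&\le \int_{\Xi^2}\|\xi_1-\xi_2\|\,\Pi(\diff\xi_1,\diff\xi_2).
\end{align*}
Taking the infimum over $\Pi$ on the right-hand side and the supremum over $f$ on the left-hand side then yields the claimed ``$\ge$'' inequality.

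For the second (hard) inequality, I view $\Wass{\Q_1}{\Q_2}$ as the value of the primal linear program that minimizes $\int_{\Xi^2}\|\xi_1-\xi_2\|\,\Pi(\diff\xi_1,\diff\xi_2)$ over nonnegative measures $\Pi$ on $\Xi^2$ subject to the two marginal constraints $\Pi(\cdot,\Xi)=\Q_1$ and $\Pi(\Xi,\cdot)=\Q_2$. Attaching Lagrange multiplier functions $f_1:\Xi\ra\R$ and $-f_2:\Xi\ra\R$ to these constraints produces the formal Lagrangian dual
\begin{align*}
\sup_{f_1,f_2}\left\{\int_\Xi f_1\,\diff\Q_1-\int_\Xi f_2\,\diff\Q_2 ~:~ f_1(\xi_1)-f_2(\xi_2)\le\|\xi_1-\xi_2\|\;\forall \xi_1,\xi_2\in\Xi\right\}.
\end{align*}
The key reduction is that one may restrict to $f_1=f_2=f\in\Lip$ without loss: given any dual-feasible pair $(f_1,f_2)$, the infimal convolution $g(\xi)\Let\inf_{\xi'\in\Xi}\{f_2(\xi')+\|\xi-\xi'\|\}$ is automatically $1$-Lipschitz, satisfies $g(\xi)\le f_2(\xi)$ (by choosing $\xi'=\xi$), and satisfies $g(\xi)\ge f_1(\xi)$ (by dual feasibility), so replacing $(f_1,f_2)$ by $(g,g)$ weakly improves the dual objective. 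Invoking LP strong duality to equate primal and dual values then closes the ``$\le$'' inequality.

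The main obstacle is this strong-duality step, i.e., ruling out a duality gap between the primal transportation LP and its Lagrangian dual in the infinite-dimensional setting where both variables and constraints are indexed by points of $\Xi\subseteq\R^m$. Fortunately, the cost $c(\xi_1,\xi_2)=\|\xi_1-\xi_2\|$ is continuous and the marginals $\Q_1,\Q_2\in\M(\Xi)$ have finite first moments by construction, which places the problem squarely within the classical framework of optimal transport where strong duality is well established~\cite{ref:KantRub-58}; I would invoke this fact rather than rederive it.
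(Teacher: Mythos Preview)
Your argument is the standard one and is correct: the easy inequality follows from the coupling estimate, and for the hard inequality you correctly identify the Lagrangian dual of the transportation LP and the infimal-convolution trick that collapses the dual pair $(f_1,f_2)$ to a single $1$-Lipschitz function, deferring the no-gap step to the classical optimal-transport literature.

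Note, however, that the paper does not supply its own proof of this theorem at all. It simply states the result and attributes it to Kantorovich and Rubinstein~\cite{ref:KantRub-58} for the bounded-support case, with a pointer to Villani~\cite[Remark~6.5, p.~107]{ref:Villani} for the extension to unbounded distributions in $\M(\Xi)$. So there is nothing to compare against beyond observing that your sketch is exactly the route those references take; in particular, the strong-duality step you flag as the main obstacle is precisely what is established in those sources under the standing hypotheses (continuous cost, finite first moments), so your decision to invoke rather than rederive it is in line with the paper's own treatment.
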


Kantorovich and Rubinstein~\cite{ref:KantRub-58} originally established this result for distributions with bounded support. A modern proof for unbounded distributions is due to Villani~\cite[Remark~6.5, p.~107]{ref:Villani}. The optimization problems in Definition~\ref{def:wass} and Theorem~\ref{thm:KantorovichRubinstein}, which provide two equivalent characterizations of the Wasserstein metric, constitute a primal-dual pair of infinite-dimensional linear programs. The dual representation implies that two distributions $\Q_1$ and $\Q_2$ are close to each other with respect to the Wasserstein metric if and only if all functions with uniformly bounded slopes have similar integrals under $\Q_1$ and $\Q_2$. Theorem~\ref{thm:KantorovichRubinstein} also demonstrates that the Wasserstein metric is a special instance of an integral probability metric (see {\em e.g.} \cite{ref:Mull-97}) and that its generating function class coincides with a family of Lipschitz continuous functions.

%In this study, we consider the primal characterization in Definition~\ref{def:wass} as the main definition since it turns out to be a key ingredient for our reduction results in the next section.

}

In the remainder we will examine the ambiguity set
                \begin{align}
                    \label{eq:wasserstein-ball}
                    \ball{\Pem}{\eps} \Let \left\{ \Q \in \M(\Xi) ~:~ \Wass{\Pem}{\Q} \le \eps\right \},
                \end{align}
which can be viewed as the Wasserstein ball of radius $\eps$ centered at the empirical distribution~$\Pem$. Under a common light tail assumption on the unknown data-generating distribution $\PP$, this ambiguity set offers attractive performance guarantees in the spirit of Section~\ref{sec:prob}.

    \begin{As}[Light-tailed distribution]
    \label{a:exp}
        There exists an exponent $a > 1$ such that
        \begin{align*}
            A \Let \EE^\PP\big[ \exp(\|\xi\|^a) \big] = 
            \int_{\Xi} \exp(\|\xi\|^a)\,\PP(\diff \xi) < \infty.
        \end{align*}
    \end{As}

Assumption~\ref{a:exp} essentially requires the tail of the distribution $\PP$ to decay at an exponential rate. Note that this assumption trivially holds if $\Xi$ is compact. 
	{Heavy-tailed distributions that fail to meet Assumption~\ref{a:exp} are difficult to handle even in the context of the classical sample average approximation. Indeed, under a heavy-tailed distribution the sample average of the loss corresponding to any fixed decision $x \in \X$ may not even converge to the expected loss; see {\em e.g.}~\cite{ref:Brown-15, ref:Catoni-12}.} The following modern measure concentration result provides the basis for establishing powerful finite sample guarantees.

{
    \begin{Thm}[Measure concentration {\cite[Theorem 2]{ref:FouGui-14}}]
    \label{thm:concentration}
        If Assumption~\ref{a:exp} holds, we have
        \begin{align}
            \label{concentration}
            \PP^N \Big\{ \Wass{\PP}{\Pem} \ge \eps \Big \} \le \left\{ \begin{array}{ll} c_1 \exp\big({-c_2N\eps^{\max\{m,2\}}}\big) & \text{if } \eps \le 1, \\ c_1 \exp\big({-c_2N\eps^a}\big) & \text{if } \eps > 1,\end{array}\right.
        \end{align}
        for all $N \ge 1$, $m \neq 2$, and $\eps>0$, where $c_1, c_2$ are positive constants that only depend on $a$, $A$, and $m$.\footnote{{A similar but slightly more complicated inequality also holds for the special case $m = 2$; see \cite[Theorem 2]{ref:FouGui-14} for details.}}
    \end{Thm}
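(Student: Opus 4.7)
My plan is the classical two-step recipe for empirical-measure deviations: first bound the expected Wasserstein distance $\EE^{\PP^N}\!\big[\Wass{\PP}{\Pem}\big]$ at the announced rate, and then establish concentration of $\Wass{\PP}{\Pem}$ around this mean. Combining these with an appropriate choice of an auxiliary truncation radius $R_N$ will produce~\eqref{concentration}, whose two regimes correspond to the bulk (where the dimensional rate dominates) and the tail (where the moment exponent $a$ takes over).

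For the expectation bound, the plan is to invoke the Kantorovich--Rubinstein dual (Theorem~\ref{thm:KantorovichRubinstein}) to reduce $\Wass{\PP}{\Pem}$ to a supremum of integrals $\int f\,\mathrm{d}(\PP-\Pem)$ over $1$-Lipschitz test functions~$f$. A standard multi-scale device then approximates any such $f$ by a telescoping sum $\sum_k (f_k - f_{k-1})$, where $f_k$ is the local average of $f$ over a dyadic partition of $\R^m$ at scale $2^{-k}$; since $|f_k-f_{k-1}|\lesssim 2^{-k}$, the contribution of scale $k$ is bounded by $2^{-k}$ times the total-variation deviation of $\Pem$ from $\PP$ on the dyadic cubes at that level. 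Inside a ball of radius $R$ there are $\mathcal{O}(R^m 2^{km})$ such cubes, so Cauchy--Schwarz gives a per-scale deviation of order $\sqrt{R^m 2^{km}/N}$. Summing this essentially geometric series over~$k$ and using Assumption~\ref{a:exp} to control the mass outside the ball of radius $R$ yields $\EE^{\PP^N}\!\big[\Wass{\PP}{\Pem}\big]\lesssim N^{-1/\max\{m,2\}}$.

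For the concentration step, I would apply McDiarmid's bounded-differences inequality to $(\data_1,\dots,\data_N)\mapsto \Wass{\PP}{\Pem}$. The decisive structural fact is that replacing a single sample $\data_i$ by some $\tilde\data_i$ merely moves a Dirac mass of weight $1/N$ in $\Pem$, so by Definition~\ref{def:wass} and the triangle inequality, $\big|\Wass{\PP}{\Pem}-\Wass{\PP}{\Pem'}\big|\leq \|\data_i-\tilde\data_i\|/N$. Truncating each sample to a ball of radius~$R_N$ produces bounded differences of order~$R_N/N$, whence McDiarmid yields a sub-Gaussian tail of the form $\exp(-c N \eps^2/R_N^2)$. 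The mass lost to truncation is controlled by Assumption~\ref{a:exp} and contributes a stretched-exponential factor of the form $\exp(-c' R_N^a)$. Optimizing $R_N$ against $\eps$ then produces the two regimes of~\eqref{concentration}: for small $\eps$ the optimum lies in the bulk and the dimensional exponent $\max\{m,2\}$ surfaces through the expectation bound, while for large $\eps$ the truncation regime dominates and the tail exponent $a$ governs the decay.

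The main technical obstacle is sharpening the dimensional exponent to $\max\{m,2\}$. A naive combination of the dyadic partition with $L^{\infty}$-deviations of empirical mass on cubes loses a logarithmic factor and delivers only $N^{-1/m}\log N$ together with the too-strong Gaussian exponent $\eps^2$; extracting the clean rate requires an $L^{1}$ treatment of the empirical distribution in the spirit of Ajtai--Komlós--Tusnády, and careful bookkeeping of how the bulk moment bound interacts with the McDiarmid tail to surface the polynomial exponent $\eps^m$ (rather than $\eps^2$) in~\eqref{concentration}. Once the correct expectation rate is secured, the crossover at $\eps \approx 1$ is a straightforward balancing of the two error terms.
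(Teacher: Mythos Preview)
The paper does not prove Theorem~\ref{thm:concentration}. It is quoted verbatim from \cite[Theorem~2]{ref:FouGui-14} and used as a black box; the only ``proof'' in the paper is the citation itself. So there is nothing to compare your proposal against on the paper's side.

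That said, your sketch is a reasonable outline of how such results are obtained, and it is close in spirit to the original Fournier--Guillin argument (dyadic multi-scale decomposition for the mean plus a separate concentration step). One genuine gap worth flagging: McDiarmid's inequality will only ever produce a sub-Gaussian tail $\exp(-cN\eps^2/R_N^2)$, and no amount of ``careful bookkeeping'' turns that into $\exp(-cN\eps^m)$ for $m>2$. You correctly identify this as the crux but do not resolve it. The Fournier--Guillin proof does not use a bounded-differences argument for the fluctuation; instead it bounds the $p$-th moments $\EE\big[\Wass{\PP}{\Pem}^p\big]$ directly via the same dyadic machinery and then applies Markov's inequality with a carefully chosen $p$ depending on $\eps$. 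This is what allows the dimensional exponent $\max\{m,2\}$ to appear in the tail bound rather than being confined to the mean. Your two-step ``mean plus McDiarmid'' decomposition would yield the weaker bound $\exp(-cN\eps^2)$ in the small-$\eps$ regime, which is strictly worse than~\eqref{concentration} whenever $m>2$.
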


%The constant $\eps_0$ represents a threshold where the a priori bound in \eqref{concentration} changes the convergence behavior. It should be noted that in \cite[Theorem 2]{ref:FouGui-14} this threshold is set to $\eps_0 = 1$. However, one may always rescale the support set $\Xi$ while the concentration measure property still holds with possibly different constants $c_1, c_2$. Thus, we can consider the threshold as a generic constant as well. 

Theorem~\ref{thm:concentration} provides an a priori estimate of the probability that the unknown data-generating distribution~$\PP$ resides outside of the Wasserstein ball $\ball{\Pem}{\eps}$. Thus, we can use Theorem~\ref{thm:concentration} to estimate the radius of the smallest Wasserstein ball that contains $\PP$ with confidence $1-\beta$ for some prescribed $\beta \in (0,1)$. Indeed, equating the right-hand side of \eqref{concentration} to $\beta$ and solving for $\eps$ yields
\begin{align}
	\label{eps_N}
	\eps_N(\beta) \Let \left\{ \begin{array}{ll}  \Big({\log (c_1\beta^{-1}) \over c_2N} \Big)^{1/{\max\{m,2\}}} & \text{if } N \ge {\log(c_1\beta^{-1}) \over c_2}, \\
	\Big({\log (c_1\beta^{-1}) \over c_2N} \Big)^{1/a} & \text{if } N < {\log(c_1\beta^{-1}) \over c_2}. \end{array}\right.
\end{align}
%and thus the Wasserstein ball of radius $\eps_N(\beta)$ centered at $\Pem$ contains the unknown true distribution $\PP$ with confidence at least $1-\beta$.
Note that the Wasserstein ball with radius $\eps_N(\beta)$ can thus be viewed as a confidence set for the unknown true distribution as in statistical testing; see also~\cite{ref:BertSAA-14}.

\begin{Thm}[Finite sample guarantee] \label{thm:fin}
	Suppose that Assumption~\ref{a:exp} holds and that $\beta\in (0,1)$. Assume also that $\Jdd$ and $\xdd$ represent the optimal value and an optimizer of the distributionally robust program~\eqref{DRO} with ambiguity set $\amb_N = \ball{\Pem}{\eps_N(\beta)}$. Then, the finite sample guarantee \eqref{out-of-sample} holds.	
\end{Thm}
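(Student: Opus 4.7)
The plan is to reduce the statement to the measure concentration inequality of Theorem~\ref{thm:concentration} via a short inclusion-of-events argument. The central observation is that whenever the unknown distribution $\PP$ happens to lie in the Wasserstein ball $\amb_N = \ball{\Pem}{\eps_N(\beta)}$, the worst-case expectation defining $\Jdd$ automatically dominates the genuine out-of-sample cost of the data-driven optimizer $\xdd$.

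First, I would check that the radius $\eps_N(\beta)$ has been calibrated precisely so that the right-hand side of \eqref{concentration} equals $\beta$. Solving $c_1\exp(-c_2 N\eps^{\max\{m,2\}}) = \beta$ on the regime $\eps\le 1$ and $c_1\exp(-c_2 N\eps^a) = \beta$ on $\eps > 1$ yields exactly the two branches in the definition of $\eps_N(\beta)$, with the case split on $N$ versus $\log(c_1\beta^{-1})/c_2$ corresponding to the boundary $\eps = 1$. Consequently Theorem~\ref{thm:concentration} gives
\[
\PP^N\Big\{\Xiem~:~\Wass{\PP}{\Pem}\le\eps_N(\beta)\Big\} \;\ge\; 1-\beta.
\]

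Second, I would argue pathwise. Fix any realization of $\Xiem$ on the event above, so that $\PP\in\amb_N$. By definition of $\xdd$ as a minimizer of \eqref{DRO},
\[
\Jdd \;=\; \sup_{\Q\in\amb_N}\EE^\Q\big[h(\xdd,\xi)\big] \;\ge\; \EE^\PP\big[h(\xdd,\xi)\big],
\]
where the last inequality follows immediately from $\PP\in\amb_N$. Hence the event $\{\PP\in\amb_N\}$ is contained in $\{\EE^\PP[h(\xdd,\xi)] \le \Jdd\}$, and monotonicity of $\PP^N$ combined with the concentration bound above delivers \eqref{out-of-sample}.

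The hard part: in truth there is little obstacle, since the measure-theoretic heavy lifting has been outsourced to the Fournier--Guillin inequality quoted in Theorem~\ref{thm:concentration}. The only minor subtleties are (i) verifying that $\eps_N(\beta)$ indeed inverts the concentration inequality in both regimes, and (ii) ensuring measurability of the random quantities $\xdd$ and $\Jdd$ as functions of the training data $\Xiem$, which is standard under mild regularity of $h$ and may be taken for granted.
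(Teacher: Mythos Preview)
Your proposal is correct and follows essentially the same approach as the paper: invoke Theorem~\ref{thm:concentration} together with the calibration~\eqref{eps_N} to obtain $\PP^N\{\PP\in\ball{\Pem}{\eps_N(\beta)}\}\ge 1-\beta$, then observe that on this event $\EE^\PP[h(\xdd,\xi)]\le\sup_{\Q\in\amb_N}\EE^\Q[h(\xdd,\xi)]=\Jdd$. Your write-up is in fact slightly more detailed than the paper's, spelling out the inversion of the concentration bound and the inclusion-of-events step explicitly.
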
 

\begin{proof}
	The claim follows immediately from Theorem~\ref{thm:concentration}, which ensures via the definition of $\eps_N(\beta)$ in~\eqref{eps_N} that $\PP^N \{ \PP\in \ball{\Pem}{\eps_N(\beta)} \} \ge 1-\beta$. Thus,  $\EE^\PP [ h(\xdd,\xi)] \leq \sup_{\Q\in\amb_N}\EE^\Q [ h(\xdd,\xi)] = \Jdd$ with probability $1-\beta$.
	%The claim follows directly from Theorem~\ref{thm:concentration} and the definition of~$\eps_N(\beta)$ in~\eqref{eps_N}.
\end{proof}

%\begin{Rem}[Growth Regimes]  
%	The radius $\eps_N(\beta)$ of the Wasserstein ball containing $\PP$ with confidence $1-\beta$ scales with $N^{-1/m}$ if $N \ge \log(c_1\beta^{-1})/ c_2$ and with $N^{-1/a}$ if $N < \log(c_1\beta^{-1})/c_2$. Recall that the parameter $a>1$ introduced in Assumption \ref{a:exp} characterizes the tail behavior of the distribution $\PP$, while $m$ denotes the dimension of the uncertainty. We refer to \cite[Section~1.3]{ref:FouGui-14} for a more detailed discussion of the bound \eqref{concentration}, its tightness and potential improvements.
%\end{Rem}

 It is clear from~\eqref{eps_N} that for any fixed $\beta>0$, the radius $ \eps_N(\beta)$ tends to $0$ as $N$ increases. Moreover, one can show that if $\beta_N$ converges to zero at a carefully chosen rate, then the solution of the distributionally robust optimization problem~\eqref{DRO} with ambiguity set $\amb_N = \ball{\Pem}{\eps_N(\beta_N)}$ converges to the solution of the original stochastic program~\eqref{Ex-true} as $N$ tends to infinity. The following theorem formalizes this statement.

%A more important question concerning the asymptotic behavior in our data-driven setting is in regard to the convergence of the proposed solutions $(\JN,\xN)_{N \in \N}$ to the optimal counterpart $(\J,x^\star)$ of the stochastic program \eqref{Ex-true}. This feature requires further regularity conditions on the loss function that is addressed in the next theorem. 

	\begin{Thm}[Asymptotic consistency] \label{thm:convergence} 
	Suppose that Assumption~\ref{a:exp} holds and that $\beta_N\in(0,1)$, $N \in \N$, satisfies $\sum_{N=1}^\infty\beta_N<\infty$ and $\lim_{N\ra\infty}\eps_N(\beta_N)=0$.\footnote{{A possible choice is $\beta_N = \exp(-\sqrt{N})$.}} Assume also that $\Jdd$ and $\xdd$ represent the optimal value and an optimizer of the distributionally robust program~\eqref{DRO} with ambiguity set $\amb_N = \ball{\Pem}{\eps_N(\beta_N)}$, $N\in\N$.
	\begin{enumerate}[label=(\roman*), itemsep = 1mm, topsep = 1mm]
						
			\item \label{thm:J-asy}
			If $h(x,\xi)$ is upper semicontinuous in $\xi$ and there exists $L\geq 0$ with $|h(x,\xi)|\leq L(1+\|\xi\|)$ for all $x\in\X$ and $\xi\in\Xi$, %bounded above by a Lipschitz continuous function in $\xi$ for every $x\in\X$, % and if there exists $L\geq 0$ with $|h(x,\xi)| \le L(1+\|\xi\|)$ admits at most a linear growth uniformly in $x$.\footnote{{That is, there exists a constant $L$ such that $|h(x,\xi)| \le L(1+\|\xi\|)$.}} Then, with probability $1$ we have $\Jdd \da \J$ and $\EE^{\PP}[h(\xdd,\xi)] \da \J$ as $N \ra \infty$ where $\J$ is the optimal value of the stochastic program \eqref{Ex-true}.
			then $\PP^\infty$-almost surely we have $\Jdd \da \J$ as $N \ra \infty$ where $\J$ is the optimal value of~\eqref{Ex-true}.
			%$\lim_{N\ra\infty} \Jdd$ coincides $\PP^\infty$-almost surely with the optimal value of~\eqref{Ex-true}. %$\PP^\infty\{ \lim_{N\ra \infty}\Jdd = \J\}=1$.
			
			\item \label{thm:x-asy}
			If the assumptions of assertion~\ref{thm:J-asy} hold, $\X$ is closed, and $h(x,\xi)$ is lower semicontinuous in $x$ for every $\xi\in\Xi$, then any accumulation point of $\{\xdd\}_{N \in \N}$ is $\PP^\infty$-almost surely an optimal solution for~\eqref{Ex-true}. 
		\end{enumerate}
	\end{Thm}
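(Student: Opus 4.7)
\emph{Overall strategy.} The plan is to sandwich $\Jdd$ between $\J$ and $\J+o(1)$ by exploiting that the empirical distribution $\Pem$ converges to $\PP$ almost surely in the Wasserstein metric, so the ambiguity balls shrink rapidly around $\PP$. The first ingredient is almost-sure Wasserstein convergence: by construction of $\eps_N(\beta_N)$ in~\eqref{eps_N} together with Theorem~\ref{thm:concentration}, $\PP^N\{\Wass{\PP}{\Pem}>\eps_N(\beta_N)\}\le \beta_N$, and since $\sum_N \beta_N<\infty$ the Borel--Cantelli lemma implies $\PP\in \ball{\Pem}{\eps_N(\beta_N)}$ eventually $\PP^\infty$-a.s. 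Combining with $\eps_N(\beta_N)\ra 0$ yields $\Wass{\Pem}{\PP}\ra 0$ almost surely.

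\emph{Continuity lemma and assertion~\ref{thm:J-asy}.} I would next prove the following key fact: for any $x\in\X$ and any (possibly random) sequence $\Q_N\in\M(\Xi)$ with $\Wass{\Q_N}{\PP}\ra 0$, one has $\limsup_N \EE^{\Q_N}[h(x,\xi)]\le \EE^{\PP}[h(x,\xi)]$. The idea is that Wasserstein convergence is weak convergence \emph{plus} convergence of the first moment $\EE^{\Q_N}[\|\xi\|]\ra\EE^\PP[\|\xi\|]$, so applying Fatou/Portmanteau to the nonnegative lower semicontinuous function $\xi\mapsto L(1+\|\xi\|)-h(x,\xi)$ gives a lower bound whose linear terms converge and can be cancelled, leaving the claim. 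For the lower bound on $\Jdd$, Step~1 gives $\PP\in \ball{\Pem}{\eps_N(\beta_N)}$ eventually, so $\Jdd\ge \inf_{x\in\X}\EE^\PP[h(x,\xi)]=\J$ a.s. For the upper bound, fix $\eta>0$, pick $x_\eta\in\X$ with $\EE^\PP[h(x_\eta,\xi)]\le \J+\eta$, and for each $N$ select a $1/N$-near-maximizer $\Q_N\in\ball{\Pem}{\eps_N(\beta_N)}$ of the inner supremum evaluated at $x_\eta$. The triangle inequality gives $\Wass{\Q_N}{\PP}\le \eps_N(\beta_N)+\Wass{\Pem}{\PP}\ra 0$, so the continuity lemma yields $\limsup_N \Jdd\le \EE^\PP[h(x_\eta,\xi)]\le \J+\eta$. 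Letting $\eta\da 0$ completes assertion~\ref{thm:J-asy}.

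\emph{Assertion~\ref{thm:x-asy} and the main obstacle.} Let $\wh{x}_{N_k}\ra x^\star$ along a subsequence; closedness of $\X$ gives $x^\star\in\X$. The envelope $L(1+\|\xi\|)$ is $\PP$-integrable by Assumption~\ref{a:exp}, and $h(\cdot,\xi)$ is lower semicontinuous, so the (reverse) Fatou lemma produces $\liminf_k \EE^\PP[h(\wh{x}_{N_k},\xi)]\ge \EE^\PP[h(x^\star,\xi)]$. Coupled with $\wh{J}_{N_k}\ge \EE^\PP[h(\wh{x}_{N_k},\xi)]$ (eventually, from Step~1) and $\wh{J}_{N_k}\ra \J$ from part~\ref{thm:J-asy}, this forces $\EE^\PP[h(x^\star,\xi)]\le \J$, i.e., $x^\star$ is optimal. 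The main technical obstacle is the continuity lemma: the direction of upper semicontinuity in $\xi$ versus lower semicontinuity in $x$ forces two distinct Fatou-type arguments, and it is essential that the convergence $\Pem\ra\PP$ is in the Wasserstein (not merely weak) sense, since only then are the linear envelopes handled correctly under the unbounded growth of $h$.
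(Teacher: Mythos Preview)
Your proposal is correct and follows the same overall skeleton as the paper (Borel--Cantelli for the lower bound, a near-optimizer $x_\eta$ plus a near-maximizing $\Q_N$ for the upper bound, Fatou for part~\ref{thm:x-asy}), but the way you establish the key continuity step $\limsup_N \EE^{\Q_N}[h(x,\xi)]\le \EE^{\PP}[h(x,\xi)]$ is genuinely different. The paper constructs, via a separate technical lemma, a non-increasing sequence of \emph{Lipschitz} majorants $h_k(x,\cdot)\searrow h(x,\cdot)$, invokes the Kantorovich--Rubinstein duality (Theorem~\ref{thm:KantorovichRubinstein}) to get $\EE^{\Q_N}[h_k]\le \EE^{\PP}[h_k]+L_k\,\Wass{\PP}{\Q_N}$, and finishes with monotone convergence. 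You instead apply the Portmanteau inequality for nonnegative lower semicontinuous integrands to $\xi\mapsto L(1+\|\xi\|)-h(x,\xi)$ and cancel the linear envelope using the first-moment convergence that accompanies Wasserstein convergence. Your route is shorter and avoids the auxiliary Lipschitz-approximation lemma altogether; the paper's route, on the other hand, makes the dependence on the Wasserstein distance quantitative (through the Lipschitz constants $L_k$), which is conceptually closer to the rest of the paper's machinery. Either argument is sound; the only thing worth spelling out explicitly in yours is the standard fact you cite implicitly, namely that convergence in the $1$-Wasserstein metric is equivalent to weak convergence together with $\EE^{\Q_N}[\|\xi\|]\to\EE^{\PP}[\|\xi\|]$, since without the moment part the cancellation of the unbounded envelope would fail.
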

	
	%\note{The assertions in Theorem~\ref{thm:convergence} cannot be deduced directly from the results in \cite{ref:BertSAA-14} since the Wasserstein ambiguity set may not be necessarily uniformly consistent in the sense of \cite[Definition ]{ref:BertSAA-14}}
	The proof of Theorem~\ref{thm:convergence} will rely on the following technical lemma.
	
	\begin{Lem}[Convergence of distributions] \label{lem:asy}
	If Assumption~\ref{a:exp} holds and $\beta_N\in(0,1)$, $N \in \N$, satisfies $\sum_{N=1}^\infty\beta_N<\infty$ and $\lim_{N\ra\infty}\eps_N(\beta_N)=0$, then, any sequence $\wh\Q_N \in \ball{\Pem}{\eps_N(\beta_N)}$, $N\in\N$, where $\wh \Q_N$ may depend on the training data, converges under the Wasserstein metric (and thus weakly) to $\PP$ almost surely with respect to $\PP^\infty$, that is,
		\[\PP^{\infty} \left\{  \lim_{N \ra \infty} \Wass{\PP}{\wh\Q_N} = 0 \right\} = 1.\] 
\end{Lem}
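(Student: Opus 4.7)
The plan is to combine the triangle inequality for the Wasserstein metric with the measure concentration estimate from Theorem~\ref{thm:concentration} and an application of the Borel--Cantelli lemma, and then sandwich the sequence $\wh\Q_N$ between $\PP$ and $\Pem$.

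First I would observe that since $\wh\Q_N \in \ball{\Pem}{\eps_N(\beta_N)}$ by assumption, the triangle inequality gives
\begin{equation*}
\Wass{\PP}{\wh\Q_N} \;\le\; \Wass{\PP}{\Pem} + \Wass{\Pem}{\wh\Q_N} \;\le\; \Wass{\PP}{\Pem} + \eps_N(\beta_N).
\end{equation*}
Since $\eps_N(\beta_N)\to 0$ by hypothesis, the lemma reduces to showing that $\Wass{\PP}{\Pem}\to 0$ $\PP^\infty$-almost surely.

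Next, I would apply Theorem~\ref{thm:concentration} at the radius $\eps = \eps_N(\beta_N)$. By the very construction of $\eps_N(\beta)$ in equation~\eqref{eps_N}, this yields
\begin{equation*}
\PP^N\bigl\{ \Wass{\PP}{\Pem} \ge \eps_N(\beta_N)\bigr\} \;\le\; \beta_N.
\end{equation*}
The summability hypothesis $\sum_{N=1}^\infty \beta_N < \infty$ then permits invoking the Borel--Cantelli lemma on the product space $(\Xi^\infty,\PP^\infty)$, yielding
\begin{equation*}
\PP^\infty\Bigl\{ \Wass{\PP}{\Pem} \ge \eps_N(\beta_N) \text{ infinitely often}\Bigr\} = 0.
\end{equation*}
Hence, $\PP^\infty$-almost surely, $\Wass{\PP}{\Pem} < \eps_N(\beta_N)$ for all sufficiently large $N$, and since $\eps_N(\beta_N)\downarrow 0$ we conclude that $\Wass{\PP}{\Pem}\to 0$ almost surely. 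Plugging this back into the triangle inequality above closes the Wasserstein-metric part of the claim.

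Finally, for the parenthetical remark that Wasserstein convergence implies weak convergence, I would quote the standard fact (see, e.g., Villani's book) that for distributions with a finite first moment, convergence in the $1$-Wasserstein metric is strictly stronger than weak convergence. The main subtle point to be careful about is that $\wh\Q_N$ is allowed to depend on the training data $\Xiem$; however, the argument above never uses any property of $\wh\Q_N$ beyond its membership in the Wasserstein ball, and that containment holds pointwise in $\Xiem$, so the almost-sure statement transfers without difficulty. I do not anticipate a serious obstacle; the only delicate ingredient is matching the threshold in Theorem~\ref{thm:concentration} to $\eps_N(\beta_N)$, which is precisely how that radius was defined.
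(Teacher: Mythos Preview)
Your proposal is correct and follows essentially the same approach as the paper's proof: triangle inequality for the Wasserstein metric, the concentration bound $\PP^N\{\Wass{\PP}{\Pem}\ge \eps_N(\beta_N)\}\le \beta_N$ from Theorem~\ref{thm:concentration}, and then Borel--Cantelli using $\sum_N \beta_N<\infty$. The only cosmetic difference is that the paper combines the two steps to bound $\Wass{\PP}{\wh\Q_N}$ directly by $2\eps_N(\beta_N)$ before invoking Borel--Cantelli, whereas you first isolate $\Wass{\PP}{\Pem}\to 0$ and then plug back in; the logic is identical.
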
 

\begin{proof}
	As $\wh\Q_N \in \ball{\Pem}{\delta_N}$, the triangle inequality for the Wasserstein metric ensures that
		\begin{align*}
		\Wass{\PP}{\wh\Q_N } \le \Wass{\PP}{\Pem} + \Wass{\Pem}{\wh\Q_N} \le \Wass{\PP}{\Pem} + \eps_N(\beta_N).
		\end{align*}
	Moreover, Theorem~\ref{thm:concentration} implies that $\PP^N \{ \Wass{\PP}{\Pem} \le \eps_N(\beta_N)\}\geq 1-\beta_N$, and thus we have $\PP^N \{ \Wass{\PP}{\wh\Q_N } \leq 2\eps_N(\beta_N) \} \ge 1-\beta_N$. As $\sum_{N=1}^\infty\beta_N<\infty$, the Borel-Cantelli Lemma \cite[Theorem 2.18]{ref:Kallenberg-97} further implies that
		\[
			\PP^{\infty} \left\{ \Wass{\PP}{\wh\Q_N} \le \eps_N(\beta_N) ~ \text{for all sufficiently large } N \right\} = 1.
		\]
	Finally, as $\lim_{N\uparrow \infty}\eps_N(\beta_N)=0$, we conclude that $\lim_{N\uparrow\infty}\Wass{\PP}{\wh\Q_N} =0$ almost surely. Note that convergence with respect to the Wasserstein metric implies weak convergence~\cite{ref:Bois-11}.
\end{proof}

	\begin{proof}[Proof of Theorem~\ref{thm:convergence}]
		As $\wh x_N\in\X$, we have $\J \le \EE^\PP[h(\wh x_N,\xi)]$. Moreover, Theorem~\ref{thm:fin} implies that
			\[
				\PP^N \left\{ \J\le\EE^\PP[h(\xdd,\xi)] \le \Jdd \right\} \ge \PP^N \left\{ \PP\in \ball{\Pem}{\eps_N(\beta_N)} \right\} \ge 1-\beta_N,
			\]
		for all $N \in \N$. As $\sum_{N=1}^\infty\beta_N<\infty$, the Borel-Cantelli Lemma further implies that
		\begin{equation*}
			%\label{pf:Bor-Can}
			\PP^{\infty} \left\{ \J \le \EE^{\PP}[h(\xdd,\xi)] \le \Jdd ~ \text{for all sufficiently large }N \right\} = 1.
		\end{equation*}
		To prove assertion~\ref{thm:J-asy}, it thus remains to be shown that $\limsup_{N \ra \infty}\Jdd \le \J$ with probability $1$. As $h(x,\xi)$ is upper semicontinuous and grows at most linearly in $\xi$, there exists a non-increasing sequence of functions $h_k(x,\xi)$, $k\in\N$, such that $h(x,\xi)=\lim_{k\ra \infty} h_k(x,\xi)$, and $h_k(x,\xi)$ is Lipschitz continuous in $\xi$ for any fixed $x\in\X$ and $k\in\N$ with Lipschitz constant $L_k\geq 0$; see Lemma~\ref{lem:p.w.app} in the appendix. Next, choose any $\delta>0$, fix a $\delta$-optimal decision $x_\delta \in \X$ for \eqref{Ex-true} with $\EE^\PP[h(x_\delta,\xi)]\leq \J+\delta$, and for every $N\in\N$ let $\wh \Q_N \in \amb_N$ be a $\delta$-optimal distribution corresponding to $x_\delta$ with
		\[
			\sup_{\Q \in \amb_N}\EE^{\Q}[h(x_\delta,\xi)] \le \EE^{\Q_N}[h(x_\delta,\xi)] + \delta.
		\] 
		Then, we have
		\begin{align*}
		\limsup_{N\ra\infty}\Jdd \leq \limsup_{N \ra \infty} \sup_{\Q \in \amb_N}\EE^{\Q}[h(x_\delta,\xi)]  & \le  \limsup_{N \ra \infty} \EE^{\wh \Q_N} [h(x_\delta,\xi)] + \delta \\
		& \le \lim_{k \ra\infty} \limsup_{N \ra \infty} \EE^{\wh \Q_N}[h_k(x_\delta,\xi)] + \delta \\
		& \le \lim_{k \ra\infty} \limsup_{N \ra \infty} \left( \EE^{\PP}[h_k(x_\delta,\xi)] + L_k\, \Wass{\PP}{\wh \Q_N} \right) +\delta \\
		& = \lim_{k \ra \infty} \EE^{\PP}[h_k(x_\delta,\xi)] + \delta, \quad \PP^\infty\text{-almost surely}\\ 
		&= \EE^{\PP}[h(x_\delta,\xi)] + \delta \leq \J+2\delta,
		\end{align*}
		where the second inequality holds because $h_k(x,\xi)$ converges from above to $h(x,\xi)$, and the third inequality follows from Theorem~\ref{thm:KantorovichRubinstein}. Moreover, the almost sure equality holds due to Lemma~\ref{lem:asy}, and the last equality follows from the Monotone Convergence Theorem \cite[Theorem 5.5]{ref:Lang-93}, which applies because $|\EE^{\PP}[h_k(x_\delta,\xi)]| < \infty$. Indeed, recall that $\PP$ has an exponentially decaying tail due to Assumption~\ref{a:exp} and that $h_k(x_\delta,\xi)$ is Lipschitz continuous in~$\xi$. As $\delta>0$ was chosen arbitrarily, we thus conclude that  $\limsup_{N \ra \infty}\Jdd \le \J$. 		
%		
%		\begin{align}
%			\label{eq:pointwise-conv}
%			\limsup_{N \ra \infty} \sup_{\Q \in \amb_N}\EE^{\Q}[h(x,\xi)]\leq \EE^{\PP}[h(x,\xi)] \quad \PP^\infty\text{-almost surely}
%		\end{align}
%		for each fixed $x\in\X$. Next, let $\X'$ be a countable subset of $\X$ that contains a $\frac{1}{k}$-optimal solution of~\eqref{Ex-true} for every $k\in\N$. Then, we find
%		\begin{align*}
%			\limsup_{N \ra \infty} \Jdd & = \limsup_{N \ra \infty} \inf_{x\in\X}\sup_{\Q \in \amb_N}\EE^{\Q}[h(x,\xi)]  \le \inf_{x\in\X} \limsup_{N \ra \infty} \sup_{\Q \in \amb_N}\EE^{\Q}[h(x,\xi)]  \\
%			& \le \inf_{x\in\X'} \limsup_{N \ra \infty} \sup_{\Q \in \amb_N}\EE^{\Q}[h(x,\xi)]  \le \inf_{x\in\X'} \EE^{\PP}[h(x,\xi)] = \J \quad  \PP^\infty\text{-almost surely},
%		\end{align*}
%		where the first inequality follows from interchanging the limit superior and the infimum, while the last inequality exploits \eqref{eq:pointwise-conv}, which holds $\PP^\infty$-almost surely uniformly for all $x\in \X'$. Thus, assertion~\ref{thm:J-asy} follows.

%
%		\begin{align*}
%		\limsup_{N \ra \infty} \Jdd & = \limsup_{N \ra \infty} \sup_{\Q \in \amb_N}\EE^{\Q}[h(\wh x_N,\xi)] \\ 
%		& \le \limsup_{N \ra \infty} \EE^{\wh \Q_N}[h(\wh x_N,\xi)] + \delta \\
%		& \le \lim_{k \ra\infty} \limsup_{N \ra \infty} \EE^{\wh \Q_N}[h_k(\wh x_N,\xi)] + \delta \\
%		& \le \lim_{k \ra\infty} \limsup_{N \ra \infty} \left( \EE^{\PP}[h_k(\wh x_N,\xi)] + L_k \Wass{\PP}{\wh \Q_N} \right) +\delta \\
%		& = \lim_{k \ra \infty} \EE^{\PP}[h_k(x,\xi)] + \delta = \EE^{\PP}[h(x,\xi)] + \delta,
%		\end{align*}
%
		
		To prove assertion~\ref{thm:x-asy}, fix an arbitrary realization of the stochastic process $\{\data_N\}_{N \in\N}$ such that $\J = \lim_{N \ra \infty} \Jdd$ and  $\J \le \EE^{\PP}[h(\xdd,\xi)] \le \Jdd$ for all sufficiently large $N$. From the proof of assertion~\ref{thm:J-asy} we know that these two conditions are satisfied $\PP^\infty$-almost surely. Using these assumptions, one easily verifies that
		\begin{align}
		\label{pf:Jy}
			\liminf_{N \ra \infty} \EE^{\PP}[h(\wh x_{N},\xi)] \le \lim_{N \ra \infty} \Jdd=  \J.
		\end{align}
Next, let $\xacc$ be an accumulation point of the sequence $\{\xdd\}_{N \in\N}$, and note that $\xacc\in\X$ as $\X$ is closed. By passing to a subsequence, if necessary, we may assume without loss of generality that $\xacc = \lim_{N\ra \infty}\xdd$. Thus, 
\begin{align*}
\J \le \EE^{\PP}[h(\xacc,\xi)] & \le \EE^{\PP}[\liminf_{N \ra \infty} h(\wh x_{N},\xi)] \le \liminf_{N \ra \infty} \EE^{\PP}[h(\wh x_{N},\xi)] \le \J,
\end{align*}
where the first inequality exploits that $\xacc \in \X$, the second inequality follows from the lower semicontinuity of $h(x,\xi)$ in $x$, the third inequality holds due to Fatou's lemma (which applies because $h(x,\xi)$ grows at most linearly in $\xi$), and the last inequality follows from~\eqref{pf:Jy}. Therefore, we have $\EE^{\PP}[h(\xacc,\xi)] = \J$.
	\end{proof}

In the following we show that all assumptions of Theorem~\ref{thm:convergence} are necessary for asymptotic convergence, that is, relaxing any of these conditions can invalidate the convergence result. 

\begin{Ex}[Necessity of regularity conditions] \label{ex:regularity}
	\hfill
\begin{enumerate}[itemsep = 1mm, topsep = 1mm] 
	
	\item {\em Upper semicontinuity of $\xi \mapsto h(x,\xi)$ in Theorem~\ref{thm:convergence} \ref{thm:J-asy}:} \label{Ex:usc} \\
	Set $\Xi = [0,1]$, $\PP = \dir{0}$ and $h(x,\xi) = \ind{(0,1]}(\xi)$, whereby $\J = 0$. As $\PP$ concentrates unit mass at $0$, we have $\Pem=\dir{0}=\PP$ irrespective of $N\in\N$. For any $\eps > 0$, the Dirac distribution $\dir{\eps}$ thus resides within the Wasserstein ball $\ball{\Pem}{\eps}$. Hence, $\Jdd$ fails to converge to $\J$ for $\eps\rightarrow 0$ because
	\begin{align*}
		\Jdd \ge \EE^{\dir{\eps}} [h(x,\xi)] =  h(x, \eps) = 1,\quad \forall \eps>0.
	\end{align*}
	
	\item {\em Linear growth of $\xi \mapsto h(x,\xi)$ in Theorem~\ref{thm:convergence} \ref{thm:J-asy}:} \label{Ex:growth} \\ 	
	Set $\Xi = \R$, $\PP = \dir{0}$ and $h(x,\xi) = \xi^2$, which implies that $\J=0$. Note that for any  $\rho>\eps$, the two-point distribution $\Q_\rho = (1-\tfrac{\eps}{\rho})\dir{0}+\tfrac{\eps}{\rho}\dir{\rho}$ is contained in the Wasserstein ball $\ball{\Pem}{\eps}$ of radius $\eps >0$. Hence, $\Jdd$ fails to converge to $\J$ for $\eps\rightarrow 0$ because
	\begin{align*}
		\Jdd \ge \, \sup_{\rho > \eps} \,\EE^{\Q_\rho} [h(x,\xi)] =  \sup_{\rho > \eps} \, \eps \rho = \infty, \quad \forall \eps>0.
	\end{align*}
	
	\item {\em Lower semicontinuity of $x \mapsto h(x,\xi)$ in Theorem~\ref{thm:convergence} \ref{thm:x-asy}:} \label{Ex:lsc}\\	
	%Together with the closedness of $\X$, this condition ensure that at least one minimizer can exist. 
	Set $\X = [0,1]$ and $h(x,\xi) = \ind{[0.5,1]}(x)$, whereby $\J=0$ irrespective of $\PP$. As the objective is independent of $\xi$, the distributionally robust optimization problem~\eqref{DRO} is equivalent to~\eqref{Ex-true}. Then, $\wh x_N = \tfrac{N-1}{2N}$ is a sequence of minimizers for~\eqref{DRO} whose accumulation point $x^\star = \tfrac{1}{2}$ fails to be optimal in~\eqref{Ex-true}. 
	
\end{enumerate}
\end{Ex}

A convergence result akin to Theorem~\ref{thm:convergence} for goodness-of-fit-based ambiguity sets is discussed in \cite[Section~4]{ref:BertSAA-14}. This result is complementary to Theorem~\ref{thm:convergence}. Indeed, Theorem~\ref{thm:convergence}(i) requires $h(x,\xi)$ to be upper semicontinuous in $\xi$, which is a necessary condition in our setting (see Example~\ref{ex:regularity}) that is absent in \cite{ref:BertSAA-14}. Moreover, Theorem~\ref{thm:convergence}(ii) only requires $h(x,\xi)$ to be lower semicontinuous in $x$, while \cite{ref:BertSAA-14} asks for equicontinuity of this mapping. This stronger requirement provides a stronger result, that is, the almost sure convergence of $\sup_{\Q\in\amb_N} \EE^\Q[h(x,\xi)]$ to $\EE^\PP[h(x,\xi)]$ uniformly in $x$ on any compact subset of $\X$.

Theorems~\ref{thm:fin} and \ref{thm:convergence} indicate that a careful a priori design of the Wasserstein ball results in attractive finite sample and asymptotic guarantees for the distributionally robust solutions. In practice, however, setting the Wasserstein radius to $\eps_N(\beta)$ yields over-conservative solutions for the following reasons: 

\begin{itemize}
	\item Even though the constants $c_1$ and $c_2$ in \eqref{eps_N} can be computed based on the proof of \cite[Theorem~2]{ref:FouGui-14}, the resulting Wasserstein ball is larger than necessary, {\em i.e.}, $\PP\notin \ball{\Pem}{\eps_N(\beta)}$ with probability $\ll \beta$.
	\item Even if $\PP\notin \ball{\Pem}{\eps_N(\beta)}$, the optimal value $\Jdd$ of \eqref{DRO} may still provide an upper bound on $\J$.
	\item The formula for $\eps_N(\beta)$ in \eqref{eps_N} is independent of the training data. Allowing for random Wasserstein radii, however, results in a more efficient use of the available training data. 
\end{itemize}

While Theorems~\ref{thm:fin} and \ref{thm:convergence} provide strong theoretical justification for using Wasserstein ambiguity sets, in practice, it is prudent to calibrate the Wasserstein radius via bootstrapping or cross-validation instead of using the conservative a priori bound $\eps_N(\beta)$; see Section~\ref{sec:simulation} for further details. A similar approach has been advocated in \cite{ref:BertSAA-14} to determine the sizes of ambiguity sets that are constructed via goodness-of-fit tests. 

}

So far we have seen that the Wasserstein metric allows us to construct ambiguity sets with favorable asymptotic and finite sample guarantees. In the remainder of the paper we will further demonstrate that the distributionally robust optimization problem \eqref{DRO} with a Wasserstein ambiguity set \eqref{eq:wasserstein-ball} is not significantly harder to solve than the corresponding SAA problem \eqref{Ex_emp}.

\section{Solving Worst-Case Expectation Problems} 
\label{sec:dist}
%===============================================================================

	We now demonstrate that the inner worst-case expectation problem in \eqref{DRO} over the Wasserstein ambiguity set \eqref{eq:wasserstein-ball} can be reformulated as a finite convex program for many loss functions $h(x,\xi)$ of practical interest. For ease of notation, throughout this section we suppress the dependence on the decision variable $x$. Thus, we examine a generic worst-case expectation problem
	\begin{align}
	\label{dist-rob-Ex}
		\sup\limits_{\Q \in \ball{\Pem}{\eps}} \EE^\Q \big[ \ell(\xi) \big] 
	\end{align}
	involving a decision-{\em in}dependent loss function $\ell(\xi) \Let \max_{k \le K}\ell_k(\xi)$, which is defined as the pointwise maximum of more elementary measurable functions $\ell_k:\R^m \ra \Ru$, $k\leq K$. %We set the expectation of a signed extended-real-valued loss function to $\EE^\Q[\ell(\xi)] = \EE^\Q\big[\max\{\ell(\xi),0\}\big] + \EE^\Q\big[\min\{\ell(\xi),0\}\big]$, which is well-defined by the conventions of extended arithmetic, whereby $\infty$ dominates $-\infty$ \cite[Chapter 14.F]{ref:Rockafellar-10}. 
	The focus on loss functions representable as pointwise maxima is non-restrictive unless we impose some structure on the functions $\ell_k$. Many tractability results in the remainder of this paper are predicated on the following convexity assumption.

	\begin{As}[Convexity]
	\label{a:ell} 
	The uncertainty set $\Xi\subseteq \R^m$ is convex and closed, and the negative constituent functions $-\ell_k$ are proper, convex, and lower semicontinuous for all $k\leq K$. Moreover, we assume that $\ell_k$ is not identically $-\infty$ on $\Xi$ for all $\le K$.
	\end{As}

	Assumption~\ref{a:ell} essentially stipulates that $\ell(\xi)$ can be written as a maximum of concave functions. As we will showcase in Section~\ref{sec:cases}, this mild restriction does not sacrifice much modeling power. Moreover, generalizations of this setting will be discussed in Section~\ref{sec:exten}. We proceed as follows. Subsection~\ref{subsec:worst-case} addresses the reduction of \eqref{dist-rob-Ex} to a finite convex program, while Subsection~\ref{subsec:ext-distr} describes a technique for constructing worst-case distributions.

%-------------------------------------------------------------------------------
	\subsection{Reduction to a Finite Convex Program}
	\label{subsec:worst-case}
%-------------------------------------------------------------------------------
	
	The worst-case expectation problem~\eqref{dist-rob-Ex} constitutes an infinite-dimensional optimization problem over probability distributions and thus appears to be intractable. {However, we will now demonstrate that \eqref{dist-rob-Ex} can be re-expressed as a finite-dimensional convex program by leveraging tools from robust optimization}.
		
	%However, we will now demonstrate that \eqref{dist-rob-Ex} can be re-expressed as a finite-dimensional convex program by leveraging a dual representation of the Wasserstein metric.
%	\begin{Thm}[Kantorovich-Rubinstein \cite{ref:KantRub-58}]
%	\label{thm:Kant}
%		For any distributions $\Q_1,\Q_2\in \M(\Xi)$ we have
%		\begin{align*}
%			\Wass{\Q_1}{\Q_2} = \inf \left\{ \int_{\Xi^2} \| \xi_1 -  \xi_2 \|\,  \Pi(\diff \xi_1, \diff \xi_2) ~: \begin{array}{l}\mbox{$\Pi$ is a joint distribution of $\xi_1$ and $\xi_2$} \\ \mbox{with marginals $\Q_1$ and $\Q_2$, respectively}\! \end{array}\right\}.
%		\end{align*}
%	\end{Thm}
%	
%	Kantorovich and Rubinstein~\cite{ref:KantRub-58} originally established this result for distributions with bounded support. A modern proof for unbounded distributions is due to Villani~\cite[Remark~6.5, p.~107]{ref:Villani}. The optimization problems in Definition~\ref{def:wass} and Theorem~\ref{thm:Kant}, which provide two equivalent characterizations of the Wasserstein metric, constitute a primal-dual pair of infinite-dimensional linear programs. The decision variable $\Pi$ appearing in the dual linear program can be viewed as a \emph{transportation plan} for moving a mass distribution described by $\Q_1$ to another one described by $\Q_2$. Thus, the Wasserstein distance between $\Q_1$ and $\Q_2$ represents the cost of an optimal mass transportation plan. Theorem~\ref{thm:Kant} is a key ingredient for the subsequent reduction theorem.

	\begin{Thm}[Convex reduction]
		\label{thm:dist-rob-opt}
		If the convexity Assumption \ref{a:ell} holds, then for any $\eps \ge0 $ the worst-case expectation~\eqref{dist-rob-Ex} equals the optimal value of the finite convex program
		\begin{align}
		\label{eq:thm-dual:2}
		\left\{
			\begin{array}{clll} \inf\limits_{\lambda,s_i, z_{ik},\nu_{ik}} & \lambda \eps + {1 \over N}\sum\limits_{i = 1}^{N} s_i && \\
					\st & [-\ell_k]^*(z_{ik} - \nu_{ik}) + \sigma_{\Xi}(\nu_{ik}) - \inner{z_{ik}}{\data_i} \le s_i & \forall i \le N, & \forall k \le K \\
					& \|z_{ik}\|_* \le \lambda &\forall i \le N, & \forall k \le K.
			\end{array}
		\right.
		\end{align}
	\end{Thm}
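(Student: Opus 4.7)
The plan is to recast the infinite-dimensional worst-case expectation problem~\eqref{dist-rob-Ex} as an optimization over \emph{couplings} by invoking the definition of the Wasserstein metric, and then to repeatedly apply duality. First I would lift the decision variable to a joint distribution $\Pi$ of $(\xi, \xi')$ on $\Xi \times \Xi$ with first marginal $\Pem$ and second marginal $\Q$. Since $\Pem$ is discrete and uniformly supported on $\{\data_i\}_{i\le N}$, the joint distribution disintegrates as $\Pi = \tfrac{1}{N}\sum_{i=1}^N \dir{\data_i}\otimes \Q_i$ for some conditional measures $\Q_i \in \M(\Xi)$. Substituting this representation, problem~\eqref{dist-rob-Ex} rewrites as a generalized moment problem
\begin{align*}
\sup_{\Q_i \in \M(\Xi)} \left\{\tfrac{1}{N}\sum_{i=1}^{N} \int_\Xi \ell(\xi)\,\Q_i(\diff \xi) ~:~ \tfrac{1}{N}\sum_{i=1}^{N} \int_\Xi \|\xi - \data_i\|\,\Q_i(\diff \xi) \le \eps,\; \int_\Xi \Q_i(\diff\xi) = 1\;\forall i\right\}.
\end{align*}

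Next I would form the Lagrangian with multiplier $\lambda \ge 0$ attached to the Wasserstein budget constraint and invoke strong duality for this infinite-dimensional LP (which requires Slater-type feasibility, satisfied as long as $\Xi$ is nonempty and $\eps>0$; the edge case $\eps=0$ has to be argued separately by continuity). After exchanging inf and sup, the pointwise supremum over each probability measure $\Q_i$ collapses to a supremum over points in $\Xi$, yielding the semi-infinite reformulation
\begin{align*}
\inf_{\lambda \ge 0} \;\lambda \eps + \tfrac{1}{N} \sum_{i=1}^{N} \sup_{\xi \in \Xi} \big(\ell(\xi) - \lambda\|\xi - \data_i\|\big).
\end{align*}
Writing $\ell = \max_{k\le K} \ell_k$ and introducing epigraphical variables $s_i$ replaces the inner maximum over $k$ by $K$ constraints $\sup_{\xi\in\Xi}(\ell_k(\xi) - \lambda\|\xi-\data_i\|) \le s_i$.

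Finally, I would dualize each of these embedded sup-problems. Representing $\lambda\|\xi-\data_i\| = \sup_{\|z_{ik}\|_* \le \lambda}\inner{z_{ik}}{\xi-\data_i}$ via the dual-norm characterization and applying Sion's minimax theorem (the inner function is concave in $\xi$ and linear in $z_{ik}$, and the domain $\{z_{ik} : \|z_{ik}\|_*\le\lambda\}$ is compact and convex), one obtains
\begin{align*}
\sup_{\xi \in \Xi}\big(\ell_k(\xi) - \lambda\|\xi-\data_i\|\big) = \inf_{\|z_{ik}\|_* \le \lambda}\Big\{-\inner{z_{ik}}{\data_i} + \big([-\ell_k] + \chi_\Xi\big)^*(z_{ik})\Big\}.
\end{align*}
The conjugate of the sum $[-\ell_k] + \chi_\Xi$ splits, via infimal convolution, into $[-\ell_k]^*(z_{ik} - \nu_{ik}) + \sigma_\Xi(\nu_{ik})$ under a suitable domain-qualification; provided $\ell_k$ is proper with nonempty effective domain intersecting $\Xi$, the infimum is attained and equals this expression. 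Combining with the epigraph constraints on $s_i$ recovers the finite convex program~\eqref{eq:thm-dual:2}.

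The main obstacle I expect is the rigorous justification of the two strong-duality steps: (i) the Lagrangian exchange for the infinite-dimensional measure problem, which requires an interchangeability argument or a direct weak-duality / primal-sequence construction showing that the dual bound is tight, and (ii) the Fenchel-duality/infimal-convolution step, which requires the constraint qualification that $\operatorname{ri}(\operatorname{dom}(-\ell_k)) \cap \operatorname{ri}(\Xi) \neq \emptyset$. Both can be handled under Assumption~\ref{a:ell}, but one must be careful about cases where $\lambda = 0$, where $\Xi$ is unbounded, or where the supremum inside the dual is $+\infty$ (in which case the corresponding constraint in~\eqref{eq:thm-dual:2} simply becomes infeasible and contributes nothing, which is consistent with the convention $\infty\cdot 0 = 0$ adopted in the notation section).
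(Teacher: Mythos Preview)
Your proposal is correct and follows essentially the same route as the paper's own proof: disintegrate the coupling via the discrete structure of $\Pem$, dualize the resulting generalized moment problem (the paper cites Shapiro's strong duality for moment problems rather than a Slater argument, and handles $\eps=0$ by a direct computation rather than continuity), introduce epigraphical variables, apply the dual-norm identity together with the classical minimax theorem (compactness of the dual-norm ball), and finally split $[-\ell_k+\chi_\Xi]^*$ via infimal convolution. The only technical nuance you might sharpen is that the infimal convolution yields the conjugate of the sum only up to closure, but since the resulting expression appears in a sublevel-set constraint $\{\cdot\le s_i\}$, the closure operation is immaterial---this is exactly how the paper disposes of the constraint-qualification issue you flag in~(ii).
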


	{ Recall that $[-\ell_k]^*(z_{ik} - \nu_{ik})$ denotes the conjugate of $-\ell_k$ evaluated at $z_{ik} - \nu_{ik}$ and $\|z_{ik}\|_*$ the dual norm of $z_{ik}$. Moreover, $\chi_\Xi$ represents the characteristic function of $\Xi$ and $\sigma_\Xi$ its conjugate, that is, the support function of $\Xi$. }

	%Recall that $[\cdot]^*$ denotes the conjugacy operator and $\|\cdot\|_*$ the dual norm of $\|\cdot\|$. Moreover, $\chi_\Xi$ represents the characteristic function of $\Xi$ and $\sigma_\Xi$ its conjugate, that is, the support function of $\Xi$. 
	
	\begin{proof}[Proof of Theorem~\ref{thm:dist-rob-opt}]
		%We use the dual representation of the Wasserstein metric from Theorem~\ref{thm:Kant} to reformulate the worst-case expectation~\eqref{dist-rob-Ex} as
	{By using Definition~\ref{def:wass} we can re-express the worst-case expectation~\eqref{dist-rob-Ex} as}
		\begin{align*}
			\sup\limits_{\Q \in \ball{\Pem}{\eps}} \EE^\Q \big[ \ell(\xi) \big]  
			&= \left\{ 
			\begin{array}{cl} 
			\sup\limits_{\Pi,\Q} & \int_{\Xi} \ell(\xi) \, \Q(\diff \xi) \\ \st & \int_{\Xi^2} \|\xi -\xi'\| \, \Pi(\diff \xi, \diff \xi') \le \eps\\[1ex]
			& \left\{ \begin{array}{l} \mbox{$\Pi$ is a joint distribution of $\xi$ and $\xi'$}\\
			\mbox{with marginals $\Q$ and $\Pem$, respectively}
			\end{array}\right.			
			\end{array} 
			\right.\\
			& = \left\{ 
			\begin{array}{cl} 
			\sup\limits_{\Q_i \in \M(\Xi)} & {1 \over N}\sum\limits_{i = 1}^{N} \int_{\Xi} \ell(\xi) \, \Q_i(\diff \xi) \\ \st & {1 \over N}\sum\limits_{i = 1}^{N} \int_{\Xi} \|\xi -\data_i\| \, \Q_i(\diff \xi) \le \eps.
			\end{array} 
			\right.
		\end{align*}
		The second equality follows from the law of total probability, which asserts that any joint probability distribution $\Pi$ of $\xi$ and $\xi'$ can be constructed from the marginal distribution $\Pem$ of $\xi'$ and the conditional distributions $\Q_i$ of $\xi$ given $\xi'=\data_i$, $i\leq N$, that is, we may write $\Pi = {1 \over N}\sum_{i = 1}^{N} \dir{\data_i}\otimes \Q_i$. The resulting optimization problem represents a generalized moment problem in the distributions $\Q_i$, $i\leq N$. Using a standard duality argument, we obtain
		\begin{subequations}
		\label{eq:pf:thm-dual}
		\begin{align}
			\sup\limits_{\Q \in \ball{\Pem}{\eps}} \EE^\Q \big[ \ell(\xi) \big]  
			&= \sup\limits_{\Q_i \in \M(\Xi)}  \inf\limits_{\lambda \ge 0} {1 \over N}\sum\limits_{i = 1}^{N} \int_{\Xi} \ell(\xi)\, \Q_i(\diff \xi) + \lambda \Big( \eps - {1 \over N}\sum\limits_{i = 1}^{N} \int_{\Xi} \|\xi -\data_i\|\, \Q_i(\diff \xi) \Big) 
			\notag \\
			& \le \inf\limits_{\lambda \ge 0} \sup\limits_{\Q_i \in \M(\Xi)}  \lambda \eps + {1 \over N}\sum\limits_{i = 1}^{N} \int_{\Xi} \left( \ell(\xi) - \lambda \|\xi -\data_i\|\right)\Q_i(\diff \xi) 
			\label{eq:1} \\
			& = \inf\limits_{\lambda \ge 0} \lambda \eps + {1 \over N}\sum\limits_{i = 1}^{N} \sup_{\xi \in \Xi} \left (\ell(\xi) - \lambda \|\xi - \data_i\| \right ),
			\label{eq:1-}
		\end{align}
		where {\eqref{eq:1} follows from the max-min inequality}, and \eqref{eq:1-} follows from the fact that $\M(\Xi)$ contains all the Dirac distributions supported on $\Xi$. Introducing epigraphical auxiliary variables $s_i$, $i\leq N$, allows us to reformulate \eqref{eq:1-} as
		\begin{align}
			& \left\{ 
			\begin{array}{clll} \inf\limits_{\lambda, s_i} & \lambda \eps + {1 \over N}\sum\limits_{i = 1}^{N} s_i &&\\
			\st & \sup\limits_{\xi \in \Xi} \Big(\ell(\xi) - \lambda \|\xi - \data_i\| \Big)\le s_i & \forall i\le N&\\
						& \lambda \ge 0 &&
			\end{array}
			\right. \label{eq:2-} \\
			=~& \left\{ 
			\begin{array}{clll} \inf\limits_{\lambda, s_i} & \lambda \eps + {1 \over N}\sum\limits_{i = 1}^{N} s_i &&\\
			\st & \sup\limits_{\xi \in \Xi} \Big(\ell_k(\xi) - \max\limits_{\|z_{ik}\|_* \le \lambda} \inner{z_{ik}}{\xi - \data_i} \Big)\le s_i &\forall i\le N,& \forall k\le K \\
						& \lambda \ge 0 &&
			\end{array}			
			\right. \label{eq:2} \\
			\le~& \left\{ 
			\begin{array}{clll} \inf\limits_{\lambda, s_i} & \lambda \eps + {1 \over N}\sum\limits_{i = 1}^{N} s_i &&\\
			\st & \min\limits_{\|z_{ik}\|_* \le \lambda} \sup\limits_{\xi \in \Xi} \Big(\ell_k(\xi) - \inner{z_{ik}}{\xi - \data_i} \Big)\le s_i &\forall i\le N,& \forall k\le K \\
						& \lambda \ge 0. &&
			\end{array}
			\right. \label{eq:3}
		\end{align}
		Equality~\eqref{eq:2} exploits the definition of the dual norm and the decomposability of $\ell(\xi)$ into its constituents $\ell_k(\xi)$, $k\le K$. Interchanging the maximization over $z_{ik}$ with the minus sign (thereby converting the maximization to a minimization) and then with the maximization over $\xi$ leads to a restriction of the feasible set of \eqref{eq:2}. The resulting upper bound \eqref{eq:3} can be re-expressed as
		\begin{align}
			& \notag \left\{ 
			\begin{array}{clll} \inf\limits_{\lambda, s_i,z_{ik}} & \lambda \eps + {1 \over N}\sum\limits_{i = 1}^{N} s_i &&\\
			\st & \sup\limits_{\xi \in \Xi} \Big(\ell_k(\xi) - \inner{z_{ik}}{\xi} \Big) + \inner{z_{ik}}{\data_i}\le s_i &\forall i\le N,& \forall k\le K \\
						& \|z_{ik}\|_* \le \lambda & \forall i\le N, & \forall k\le K
			\end{array}
			\right. \\
			= ~& \left\{ 
			\begin{array}{clll} \inf\limits_{\lambda, s_i,z_{ik}} & \lambda \eps + {1 \over N}\sum\limits_{i = 1}^{N} s_i &&\\
			\st & [-\ell_k + \indI{\Xi}]^*(z_{ik}) - \inner{z_{ik}}{\data_i}\le s_i &\forall i\le N,& \forall k\le K \\
						& \|z_{ik}\|_* \le \lambda & \forall i\le N, & \forall k\le K,
			\end{array}
			\right. \label{eq:4}
		\end{align}
		\end{subequations}
		where \eqref{eq:4} follows from the definition of conjugacy, our conventions of extended arithmetic, and the substitution of $z_{ik}$ with $-z_{ik}$. Note that \eqref{eq:4} is already a finite convex program.
		
		Next, we show that Assumption~\ref{a:ell} reduces the inequalities \eqref{eq:1} and \eqref{eq:3} to equalities. Under Assumption~\ref{a:ell}, the inequality \eqref{eq:1} is in fact an equality for any $\eps > 0$ by virtue of an extended version of {a} well-known strong duality result for moment problems \cite[Proposition~3.4]{ref:Shap-dual-01}. One can show that \eqref{eq:1} continues to hold as an equality even for  $\eps = 0$, in which case the Wasserstein ambiguity set \eqref{eq:wasserstein-ball} reduces to the singleton $\{\Pem\}$, while \eqref{dist-rob-Ex} reduces to the sample average $\frac{1}{N}\sum_{i=1}^N \ell(\data_i)$. Indeed, for $\eps=0$ the variable $\lambda$ in \eqref{eq:1-} can be increased indefinitely at no penalty. As $\ell(\xi)$ constitutes a pointwise maximum of upper semicontinuous concave functions, an elementary but tedious argument shows that \eqref{eq:1-} converges to the sample average $\frac{1}{N}\sum_{i=1}^N \ell(\data_i)$ as $\lambda$ tends to infinity. % \note{Based on Sion's argument, sicne the set of probability measures is compact, the relation \eqref{eq:1} holds with equality for all $\eps \ge 0$. Should be modify this part?}.
			
		The inequality \eqref{eq:3} also reduces to an equality under Assumption \ref{a:ell} thanks to the classical minimax theorem \cite[Proposition 5.5.4]{ref:Bert-09}, which applies because the set $\{z_{ik} \in \R^m : \|z_{ik}\|_* \le \lambda\}$ is compact for any finite $\lambda\geq 0$. Thus, the optimal values of \eqref{dist-rob-Ex} and \eqref{eq:4} coincide. 
		
		Assumption~\ref{a:ell} further implies that the function $-\ell_k+\indI{\Xi}$ is proper, convex and lower semicontinuous. Properness holds because $\ell_k$ is not identically $-\infty$ on $\Xi$. By \cite[Theorem~11.23(a), p.~493]{ref:Rockafellar-10}, its conjugate essentially coincides with the \emph{epi-addition} (also known as \emph{inf-convolution}) of the conjugates of the functions $-\ell_k$ and $\sigma_{\Xi}$. Thus,
		\begin{align*}
			{[-\ell_k + \indI{\Xi}]^*(z_{ik})} & = \inf_{\nu_{ik}} \Big([-\ell_k]^*(z_{ik} - \nu_{ik}) + [\indI{\Xi}]^*(\nu_{ik}) \Big) \\
			& = \cl\Big[\inf_{\nu_{ik}} \Big([-\ell_k]^*(z_{ik} - \nu_{ik}) + \sigma_{\Xi}(\nu_{ik}) \Big)\Big],
		\end{align*}
		where $\cl[\cdot]$ denotes the closure operator that maps any function to its largest lower semicontinuous minorant. As $\cl[f(\xi)]\leq 0$ if and only if $f(\xi)\leq 0$ for any function $f$, we may conclude that \eqref{eq:4} is indeed equivalent to \eqref{eq:thm-dual:2} under Assumption~\ref{a:ell}. 
	\end{proof}

Note that the semi-infinite inequality in \eqref{eq:2-} generalizes the nonlinear uncertain constraints studied in \cite{ref:BenHerVial-15} because it involves an additional norm term and as the loss function $\ell(\xi)$ is not necessarily concave under Assumption~\ref{a:ell}. As in \cite{ref:BenHerVial-15}, however, the semi-infinite constraint admits a robust counterpart that involves the conjugate of the loss function and the support function of the uncertainty set.

From the proof of Theorem~\ref{thm:dist-rob-opt} it is immediately clear that the worst-case expectation~\eqref{dist-rob-Ex} is conservatively approximated by the optimal value of the finite convex program \eqref{eq:4} even if Assumption~\ref{a:ell} fails to hold. In this case the sum $-\ell_k + \indI{\Xi}$ in \eqref{eq:4} must be evaluated under our conventions of extended arithmetics, whereby $\infty - \infty = \infty$. These observations are formalized in the following corollary.

\begin{Cor}[Approximate convex reduction]  
\label{cor:approx}
For any $\eps \ge0$, the worst-case expectation~\eqref{dist-rob-Ex} is smaller or equal to the optimal value of the finite convex program~\eqref{eq:4}.
\end{Cor}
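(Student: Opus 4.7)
The plan is to trace through the chain of inequalities already established in the proof of Theorem~\ref{thm:dist-rob-opt} and observe that the upper-bound direction goes through without invoking Assumption~\ref{a:ell}. Specifically, the derivation from the worst-case expectation \eqref{dist-rob-Ex} down to the finite convex program \eqref{eq:4} produces a chain of bounds
\[
\sup_{\Q \in \ball{\Pem}{\eps}}\EE^\Q[\ell(\xi)] \;\le\; \eqref{eq:1} \;=\; \eqref{eq:1-} \;=\; \eqref{eq:2-} \;=\; \eqref{eq:2} \;\le\; \eqref{eq:3} \;=\; \eqref{eq:4},
\]
in which Assumption~\ref{a:ell} was used only to upgrade the two inequalities to equalities and to permit the decomposition of $[-\ell_k+\indI{\Xi}]^*$ via epi-addition into the form appearing in~\eqref{eq:thm-dual:2}. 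Since the corollary targets only the program~\eqref{eq:4} (with the undecomposed conjugate $[-\ell_k+\indI{\Xi}]^*$), neither simplification is needed.

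First, I would recall that \eqref{eq:1} holds by the max–min inequality, which is valid for arbitrary (not necessarily convex) inner functions; thus no convexity of $\ell_k$ or $\Xi$ is required here. Next, the passage from \eqref{eq:1-} to \eqref{eq:2-} is purely the introduction of epigraphical variables, and the equality \eqref{eq:2} uses only the definition of the dual norm $\|\cdot\|_*$ together with the decomposition $\ell(\xi) = \max_{k\le K} \ell_k(\xi)$, both of which are algebraic facts independent of Assumption~\ref{a:ell}. Then \eqref{eq:3} follows by interchanging $\min_{z_{ik}}$ with $\sup_{\xi\in\Xi}$, which is always a restriction of the feasible set and hence produces an upper bound without any regularity hypothesis. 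Finally, the reformulation \eqref{eq:4} is just the definition of the conjugate of $-\ell_k+\indI{\Xi}$ together with the substitution $z_{ik}\mapsto-z_{ik}$.

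The only subtle point—and I would expect this to be the main thing worth flagging—is the well-definedness of the conjugate $[-\ell_k+\indI{\Xi}]^*$ when Assumption~\ref{a:ell} is dropped. Without convexity, lower semicontinuity, or properness, the function $-\ell_k+\indI{\Xi}$ may be ill-behaved (for instance, identically $+\infty$ on $\Xi$ if $\ell_k\equiv -\infty$ there). This is handled cleanly by the extended arithmetic conventions stated in the Notation paragraph, whereby $\infty-\infty=\infty$: the sum is unambiguously defined pointwise, the supremum in $[-\ell_k+\indI{\Xi}]^*(z_{ik})=\sup_{\xi}\langle z_{ik},\xi\rangle-(-\ell_k(\xi)+\indI{\Xi}(\xi))$ is well-defined (possibly $+\infty$), and each inequality in the chain remains valid. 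Assembling the above observations yields the asserted inequality, completing the proof of the corollary.
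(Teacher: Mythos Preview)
Your proposal is correct and follows exactly the approach the paper intends: the corollary is stated immediately after the remark that the chain of inequalities in the proof of Theorem~\ref{thm:dist-rob-opt} leading up to~\eqref{eq:4} holds without Assumption~\ref{a:ell}, with the extended-arithmetic convention $\infty-\infty=\infty$ ensuring $[-\ell_k+\indI{\Xi}]^*$ is well defined. Your identification of precisely where Assumption~\ref{a:ell} was (and was not) invoked matches the paper's reasoning.
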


%-------------------------------------------------------------------------------
	\subsection{Extremal Distributions}
	\label{subsec:ext-distr}
%-------------------------------------------------------------------------------

	Stress test experiments are instrumental to assess the quality of candidate decisions in stochastic optimization. Meaningful stress tests require a good understanding of the extremal distributions from within the Wasserstein ball that achieve the worst-case expectation \eqref{dist-rob-Ex} for various loss functions. We now show that such extremal distributions can be constructed systematically from the solution of a convex program akin to~\eqref{eq:thm-dual:2}.	
	\begin{Thm}[Worst-case distributions]
		\label{thm:dual-dual}
		If Assumption \ref{a:ell} holds, then the worst-case expectation~\eqref{dist-rob-Ex} coincides with the optimal value of the finite convex program
		\begin{align}
		\label{dual-dual}
		\left\{
			\begin{array}{clll} \Sup{\alpha_{ik}, q_{ik}} & {1 \over N} \sum\limits_{i = 1}^{N} \sum\limits_{k = 1}^{K} \alpha_{ik}\ell_k\big( \data_i - {q_{ik} \over \alpha_{ik}}\big) \\
			\st & {1 \over N}\sum\limits_{i =1}^{N} \sum\limits_{k =1}^{K} \|q_{ik}\| \le \eps \\
						& \sum\limits_{k = 1}^{K} \alpha_{ik} = 1 &\forall i \le N\\
						& \alpha_{ik} \ge 0  &\forall i \le N, \quad \forall k\le K \\
						& \data_i - {q_{ik} \over \alpha_{ik}} \in \Xi &\forall i \le N, \quad \forall k\le K
			\end{array}
		\right.
		\end{align}
		irrespective of $\eps \ge 0$. Let $\big\{\alpha_{ik}(r), q_{ik}(r)\big\}_{r \in \N}$ be a sequence of feasible decisions whose objective values converge to the supremum of \eqref{dual-dual}. Then, the discrete probability distributions
		$$\Q_r \Let {1 \over N}\sum_{i = 1}^{N}\sum_{k = 1}^{K} \alpha_{ik}(r)\dir{\xi_{ik}(r)} \qquad \mbox{with}\qquad \xi_{ik}(r) \Let \data_i - {q_{ik}(r) \over \alpha_{ik}(r)}$$ 
		belong to the Wasserstein ball $\ball{\Pem}{\eps}$ and attain the supremum of \eqref{dist-rob-Ex} asymptotically, i.e., 
		\begin{align*}
			\sup\limits_{\Q \in \ball{\Pem}{\eps}} \EE^\Q \big[ \ell(\xi) \big] =  \lim\limits_{r \ra \infty} \EE^{\Q_r} \big[ \ell(\xi) \big] = 
			\lim\limits_{k \ra \infty} {1 \over N} \sum\limits_{i = 1}^{N} \sum\limits_{k = 1}^{K} \alpha_{ik}(r)\ell\big(\xi_{ik}(r)\big).
		\end{align*} 
	\end{Thm}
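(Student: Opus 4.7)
My plan is to prove the claimed equality by establishing the two inequalities between $\sup_{\Q\in\ball{\Pem}{\eps}}\EE^\Q[\ell(\xi)]$ and the value of~\eqref{dual-dual}, and then derive the asymptotic statement about $\Q_r$ as a corollary via a sandwich argument.

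For the easy direction, I would show that every feasible tuple $(\alpha_{ik}, q_{ik})$ of~\eqref{dual-dual} yields a distribution in $\ball{\Pem}{\eps}$ whose expected value dominates the corresponding objective. Setting $\xi_{ik} \Let \data_i - q_{ik}/\alpha_{ik}$ (which lies in $\Xi$ by the last constraint of~\eqref{dual-dual}), the joint plan $\Pi \Let \frac{1}{N}\sum_{i,k}\alpha_{ik}\dir{(\xi_{ik},\data_i)}$ has second marginal $\Pem$ and first marginal $\Q \Let \frac{1}{N}\sum_{i,k}\alpha_{ik}\dir{\xi_{ik}}$. Its total transportation cost equals $\frac{1}{N}\sum_{i,k}\alpha_{ik}\|\xi_{ik}-\data_i\| = \frac{1}{N}\sum_{i,k}\|q_{ik}\| \le \eps$, so $\Q \in \ball{\Pem}{\eps}$. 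Since $\ell \ge \ell_k$ pointwise, $\EE^\Q[\ell(\xi)] \ge \frac{1}{N}\sum_{i,k}\alpha_{ik}\ell_k(\xi_{ik})$. This simultaneously verifies $\Q_r \in \ball{\Pem}{\eps}$ and furnishes the lower leg of the sandwich that, once the reverse inequality is in hand, will force $\EE^{\Q_r}[\ell(\xi)] \to \sup_\Q \EE^\Q[\ell(\xi)]$.

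For the reverse inequality---the substantive content of the theorem---I would resume from the chain of identities already proved in Theorem~\ref{thm:dist-rob-opt}, which under Assumption~\ref{a:ell} yield
\begin{equation*}
\sup_{\Q\in\ball{\Pem}{\eps}}\EE^\Q[\ell(\xi)] = \inf_{\lambda\ge 0}\Big\{\lambda\eps + \frac{1}{N}\sum_{i=1}^N\sup_{\xi\in\Xi}\bigl(\ell(\xi) - \lambda\|\xi-\data_i\|\bigr)\Big\}.
\end{equation*}
Using $\ell=\max_k\ell_k$ and the identity $\max_k a_k = \sup_{\alpha\in\Delta_K}\sum_k\alpha_k a_k$, and decoupling the maximizers across $k$, the inner supremum equals $\sup_{\alpha_i\in\Delta_K,\,\xi_{ik}\in\Xi}\sum_k\alpha_{ik}\bigl(\ell_k(\xi_{ik})-\lambda\|\xi_{ik}-\data_i\|\bigr)$. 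Under the change of variable $q_{ik}=\alpha_{ik}(\data_i-\xi_{ik})$, the overall quantity takes the form
\begin{equation*}
\inf_{\lambda\ge 0}\sup_{\alpha,q}\Big\{\lambda\bigl(\eps - \tfrac{1}{N}\textstyle\sum_{i,k}\|q_{ik}\|\bigr) + \tfrac{1}{N}\textstyle\sum_{i,k}\alpha_{ik}\ell_k\bigl(\data_i - q_{ik}/\alpha_{ik}\bigr)\Big\},
\end{equation*}
which is affine in $\lambda$ and, by perspective-concavity of each $\ell_k$ under Assumption~\ref{a:ell}, jointly concave in $(\alpha,q)$. Swapping $\inf_\lambda$ and $\sup_{\alpha,q}$ and evaluating the inner infimum---finite exactly when $\frac{1}{N}\sum_{i,k}\|q_{ik}\|\le\eps$, in which case it is attained at $\lambda=0$ and equals $\frac{1}{N}\sum_{i,k}\alpha_{ik}\ell_k(\data_i-q_{ik}/\alpha_{ik})$---recovers precisely~\eqref{dual-dual}.

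The main obstacle is rigorously justifying this minimax interchange, since neither $\{\lambda\ge 0\}$ nor the $q$-domain is compact. I would address this by first truncating to $\lambda\in[0,M]$, which is without loss of generality because the worst-case expectation is finite and sufficiently large $\lambda$ drive the inner supremum below any prescribed threshold, and then observing that for any $\lambda > 0$ the inner sup is effectively restricted to the compact ball $\{\sum_{i,k}\|q_{ik}\|\le N\eps\}$, whereupon Sion's minimax theorem applies (the $\alpha$-simplex, the $q$-ball, and the $\lambda$-interval are all compact, and convex-concave continuity holds on them). Combining the two inequalities yields val(\eqref{dual-dual}) $= \sup_{\Q\in\ball{\Pem}{\eps}}\EE^\Q[\ell(\xi)]$, and the asymptotic statement for $\Q_r$ then follows by collapsing the sandwich $\frac{1}{N}\sum_{i,k}\alpha_{ik}(r)\ell_k(\xi_{ik}(r)) \le \EE^{\Q_r}[\ell(\xi)] \le \sup_\Q\EE^\Q[\ell(\xi)]$ as $r\to\infty$.
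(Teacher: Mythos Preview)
Your route is genuinely different from the paper's. The paper proves the identity by taking the Lagrangian dual of the finite convex program~\eqref{eq:4} (which it already knows equals the worst-case expectation by Theorem~\ref{thm:dist-rob-opt}), and then simplifying that dual using the dual-norm identity, a minimax swap over the compact ball $\{\|q_{ik}\|\le\beta_{ik}\}$, and Lemma~\ref{lem:perspective} to recognize $\inf_{z}\{\inner{z}{q-\alpha\data}+\alpha[-\ell_k+\chi_\Xi]^*(z)\}$ as the perspective of $\ell_k$. You instead stay at the intermediate representation $\inf_{\lambda\ge 0}\{\lambda\eps+\frac{1}{N}\sum_i\sup_{\xi\in\Xi}(\ell(\xi)-\lambda\|\xi-\data_i\|)\}$, expand via the simplex trick and the substitution $q_{ik}=\alpha_{ik}(\data_i-\xi_{ik})$, and try to swap $\inf_\lambda$ with $\sup_{\alpha,q}$ directly. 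Your approach is more elementary---it avoids conjugates, the support function, and Lemma~\ref{lem:perspective} entirely---while the paper's approach is a systematic double dualization that makes the strong-duality step transparent.

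There is, however, a genuine gap in your minimax justification. Truncating to $\lambda\in[0,M]$ and invoking Sion yields $\min_{[0,M]}\sup_{\alpha,q}L=\sup_{\alpha,q}\min_{[0,M]}L$, but the right-hand side is the \emph{penalized} problem $\sup_{\alpha,q}\{\text{obj}+M\min(0,\eps-\tfrac{1}{N}\sum\|q_{ik}\|)\}$, not~\eqref{dual-dual}; since $\min_{[0,M]}L\ge\inf_{[0,\infty)}L$ pointwise, this only delivers $\text{WCE}\ge\text{val}\eqref{dual-dual}$---the direction you already have constructively. Your claim that ``for any $\lambda>0$ the inner sup is effectively restricted to $\{\sum\|q_{ik}\|\le N\eps\}$'' is not correct and does not help. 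The clean fix is to abandon Sion and observe instead that~\eqref{dual-dual} is a concave maximization with the single convex constraint $\tfrac{1}{N}\sum\|q_{ik}\|\le\eps$, whose Lagrangian dual is precisely $\inf_{\lambda\ge 0}\sup_{\alpha,q}L=\text{WCE}$; Slater's condition holds (take $q=0$) whenever $\eps>0$, giving strong duality immediately, and $\eps=0$ is handled separately as both sides reduce to the sample average. With this repair your argument goes through and is arguably shorter than the paper's.
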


	We highlight that all fractions in \eqref{dual-dual} must again be evaluated under our conventions of extended arithmetics. Specifically, if $\alpha_{ik}=0$ and $q_{ik}\neq 0$, then $q_{ik}/\alpha_{ik}$ has at least one component equal to $+\infty$ or $-\infty$, which implies that $\data_i - q_{ik}/\alpha_{ik}\notin \Xi$. In contrast, if $\alpha_{ik}=0$ and $q_{ik}= 0$, then $\data_i - q_{ik} / \alpha_{ik}=\data_i \in \Xi$. Moreover, the $ik$-th component in the objective function of \eqref{dual-dual} evaluates to $0$ whenever $\alpha_{ik} =0$ regardless of $q_{ik}$. 
	
	The proof of Theorem \ref{thm:dual-dual} is based on the following technical lemma. 

	\begin{Lem}
		\label{lem:perspective}
		Define $F: \R^m \times \R_{+} \ra \Ru$ through $F(q,\alpha) = \inf_{z \in \R^m} \inner{z}{q - \alpha \widehat{\xi}} + \alpha f^*(z)$ for some proper, convex, and lower semicontinuous function $f:\R^m\ra \Ru$ and reference point $\widehat{\xi}\in\R^m$. Then, $F$ coincides with the (extended) perspective function of the mapping $q \mapsto -f(\widehat \xi - q)$, that is,
		\begin{align*}
			F(q, \alpha) = \left\{\begin{array}{cc}
			- \alpha f \big(\widehat{\xi} - q /\alpha\big)& \text{if }\alpha > 0, \\
			-\indI{\{0\}}(q) &  \text{if }\alpha = 0. 
			\end{array} \right.
		\end{align*}
		%The function $F$ is concave and upper semicontinuous. Moreover, if $f$ is proper, convex, and lower semicontinuous, then for $\alpha > 0$, $F$ coincides with the perspective function of the mapping $q \mapsto -f(\widehat \xi - q)$, that is, $F(q, \alpha) = - \alpha f (\widehat{\xi} - q /\alpha)$. 
	\end{Lem}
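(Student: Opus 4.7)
The plan is to treat the two cases $\alpha > 0$ and $\alpha = 0$ separately, both by direct computation, with the Fenchel--Moreau biconjugacy theorem doing the work in the non-degenerate case.

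For $\alpha > 0$, I would start by factoring $\alpha$ out of the infimum:
\begin{align*}
F(q,\alpha) &= \inf_{z\in\R^m}\Big\langle z,\, q-\alpha\widehat\xi\Big\rangle + \alpha f^*(z)
= \alpha \inf_{z\in\R^m}\Big\langle z,\, -\big(\widehat\xi - q/\alpha\big)\Big\rangle + f^*(z).
\end{align*}
Writing $w \Let \widehat\xi - q/\alpha$, the inner infimum becomes $\inf_z \{f^*(z)-\inner{z}{w}\} = -\sup_z\{\inner{z}{w}-f^*(z)\} = -f^{**}(w)$. Since $f$ is proper, convex and lower semicontinuous by assumption, the Fenchel--Moreau theorem yields $f^{**}=f$, giving $F(q,\alpha) = -\alpha f(\widehat\xi - q/\alpha)$, as required.

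For $\alpha = 0$, the second summand in the objective of the infimum collapses via the extended arithmetic convention: since $f$ is proper, $f^*$ never takes the value $-\infty$, and either $f^*(z)\in\R$ (in which case $\alpha f^*(z)=0$ trivially) or $f^*(z)=+\infty$ (in which case $\alpha f^*(z) = 0\cdot\infty = 0$ by the paper's convention). Hence
\[
F(q,0) = \inf_{z\in\R^m}\inner{z}{q},
\]
which equals $0$ when $q=0$ and $-\infty$ otherwise, exactly matching $-\indI{\{0\}}(q)$ under the convention that $\chi_{\{0\}}(q)=0$ for $q=0$ and $+\infty$ for $q\neq 0$.

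There is no genuine obstacle here; the only subtlety is bookkeeping with the extended arithmetic when $\alpha=0$, making sure the conventions $0\cdot\infty=0$ and $-(+\infty)=-\infty$ are applied consistently so that the boundary case matches the announced perspective function formula. The proof is essentially a two-line calculation in each case.
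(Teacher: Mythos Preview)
Your proof is correct and follows essentially the same approach as the paper: both treat the cases $\alpha>0$ and $\alpha=0$ separately, using Fenchel--Moreau biconjugacy ($f^{**}=f$) for the former and reducing to $\inf_z\inner{z}{q}$ for the latter. Your version is slightly more explicit in justifying why $\alpha f^*(z)=0$ when $\alpha=0$ (via properness of $f$ and the $0\cdot\infty=0$ convention), but otherwise the arguments coincide.
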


	\begin{proof}
		%Note that $F$ is defined as the pointwise infimum of a family of linear functions and is therefore concave and upper semicontinuous.
		By construction, we have $F(q,0) = \inf_{z \in \R^m} \inner{z}{q} = - \indI{\{0\}}(q)$. For $\alpha > 0$, on the other hand, the definition of conjugacy implies that 
		\begin{align*}
			F(q,\alpha) = -[\alpha f^*]^*(\alpha\widehat{\xi} - q) = - \alpha [f^*]^* \big(\widehat{\xi} - {q / \alpha}\big). 
		\end{align*}
		The claim then follows because $[f^*]^* = f$ for any proper, convex, and lower semicontinuous function $f$ \cite[Proposition~1.6.1(c)]{ref:Bert-09}. %Function $F$ on domain $\alpha >0$ is in fact the perspective function of the mapping $q \mapsto -f(\widehat \xi - q)$
		Additional information on perspective functions can be found in \cite[Section~2.2.3, p.~39]{ref:Boyd}. 
	\end{proof}

	\begin{proof}[Proof of Theorem \ref{thm:dual-dual}]
		By Theorem \ref{thm:dist-rob-opt}, which applies under Assumption~\ref{a:ell}, the worst-case expectation~\eqref{dist-rob-Ex} coincides with the optimal value of the convex program~\eqref{eq:thm-dual:2}. From the proof of Theorem \ref{thm:dist-rob-opt} we know that \eqref{eq:thm-dual:2} is equivalent to \eqref{eq:4}.
		\begin{subequations} 
		The Lagrangian dual of \eqref{eq:4} is given by
		\begin{align}
		\left\{
			\begin{array}{clll} \Sup{\beta_{ik}, \alpha_{ik}} &\Inf{\lambda, s_i, z_{ik}} \lambda \eps + \sum\limits_{i = 1}^{N} \Big[ {s_i \over N} + &\hspace{-3mm}\sum\limits_{k = 1}^{K} \big[\beta_{ik} \big(\|z_{ik}\|_* -\lambda \big) + \alpha_{ik}\big( [-\ell_k + \indI{\Xi}]^*(z_{ik}) - \inner{z_{ik}}{\data_i} - s_i\big)\big]\Big] \\
			\st & \alpha_{ik} \ge 0& \forall i \le N, \quad \forall k\le K \\
						& \beta_{ik} \ge 0  & \forall i \le N, \quad \forall k\le K,
			\end{array}
		\right. \notag 
		\end{align}
		where the products of dual variables and constraint functions in the objective are evaluated under the standard convention $0 \cdot \infty = 0$. Strong duality holds since the function $[-\ell_k+\indI{\Xi}]^*$ is proper, convex, and lower semicontinuous under Assumption~\ref{a:ell} and because this function appears in a constraint of \eqref{eq:4} whose right-hand side is a free decision variable. By explicitly carrying out the minimization over $\lambda$ and $s_i$, one can show that the above dual problem is equivalent to
		\begin{align}
		\left\{
		\begin{array}{clll} \Sup{\beta_{ik}, \alpha_{ik}} & \Inf{z_{ik}} ~ \sum\limits_{i = 1}^{N} \sum\limits_{k = 1}^{K} \beta_{ik} \|z_{ik}\|_* + &\hspace{-3mm}\alpha_{ik}[-\ell_k+ \indI{\Xi}]^*(z_{ik}) - \alpha_{ik}\inner{z_{ik}}{\data_i} \\
			\st & \sum\limits_{i =1}^{N} \sum\limits_{k = 1}^{K} \beta_{ik} = \eps \\
						& \sum\limits_{k = 1}^{K} \alpha_{ik} = {1 \over N} &\forall i \le N\\
						& \alpha_{ik} \ge 0 & \forall i \le N, \quad \forall k\le K \\
						& \beta_{ik} \ge 0  & \forall i \le N, \quad \forall k\le K.
			\end{array} \right. \label{eq:pf:dual-dual:1}
		\end{align}
		By using the definition of the dual norm, \eqref{eq:pf:dual-dual:1} can be re-expressed as
		\begin{align}
		& \left\{ \begin{array}{clll} \Sup{\beta_{ik}, \alpha_{ik}}& \Inf{z_{ik}} \sum\limits_{i = 1}^{N} \sum\limits_{k = 1}^{K} \Max{\|q_{ik}\| \le \beta_{ik}}\inner{z_{ik}}{q_{ik}} + &\hspace{-3mm} \alpha_{ik}[-\ell_k + \indI{\Xi}]^*(z_{ik}) - \alpha_{ik}\inner{z_{ik}}{\data_i} \Big] \\
			\st & \sum\limits_{i =1}^{N} \sum\limits_{k =1}^{K}\beta_{ik} = \eps \\
						& \sum\limits_{k = 1}^{K} \alpha_{ik} = {1 \over N} &\forall i \le N\\
						& \alpha_{ik} \ge 0 & \forall i \le N, \quad \forall k\le K \\
						& \beta_{ik} \ge 0  & \forall i \le N, \quad \forall k\le K
			\end{array} \right.  \\
		=~& \left\{ \begin{array}{clll} \Sup{\beta_{ik},\alpha_{ik}}& \Max{\|q_{ik}\| \le \beta_{ik}} \Inf{z_{ik}} \sum\limits_{i = 1}^{N} \sum\limits_{k =1}^{K} \inner{z_{ik}}{q_{ik}} + &\hspace{-3mm}\alpha_{ik}[-\ell_k+ \indI{\Xi}]^*(z_{ik}) - \alpha_{ik}\inner{z_{ik}}{\data_i} \\
			\st
						& \sum\limits_{i =1}^{N} \sum\limits_{k =1}^{K} \beta_{ik} = \eps \\
						& \sum\limits_{k = 1}^{K} \alpha_{ik} = {1 \over N} &\forall i \le N\\
						& \alpha_{ik} \ge 0 & \forall i \le N, \quad \forall k\le K \\
						& \beta_{ik} \ge 0  & \forall i \le N, \quad \forall k\le K,
			\end{array} \right. \label{eq:pf:dual-dual:2a} 
		\end{align}
		where \eqref{eq:pf:dual-dual:2a} follows from the classical minimax theorem and the fact that the $q_{ik}$ variables range over a non-empty and compact feasible set for any finite $\eps$; see \cite[Proposition 5.5.4]{ref:Bert-09}. Eliminating the $\beta_{ik}$ variables and using Lemma~\ref{lem:perspective} allows us to reformulate \eqref{eq:pf:dual-dual:2a} as
		\begin{align} 
		&\left\{ \begin{array}{clll} \Sup{\alpha_{ik}, q_{ik}} & \Inf{z_{ik}}~ \sum\limits_{i = 1}^{N} \sum\limits_{k =1}^{K} \inner{z_{ik}}{q_{ik} - \alpha_{ik}\data_i} + &\hspace{-3mm}  \alpha_{ik}[-\ell_k+ \indI{\Xi}]^*(z_{ik}) \\
			\st & \sum\limits_{i =1}^{N} \sum\limits_{k =1}^{K} \|q_{ik}\| \le \eps \\
						& \sum\limits_{k = 1}^{K} \alpha_{ik} = {1 \over N} &\forall i \le N\\
						& \alpha_{ik} \ge 0 &\forall i \le N,\quad \forall k\le K
			\end{array} \right. \label{eq:pf:dual-dual:2b}\\
			=~ & \left\{ \begin{array}{clll} \Sup{\alpha_{ik}, q_{ik}} & \sum\limits_{i = 1}^{N} \sum\limits_{k = 1}^{K} - \alpha_{ik} \Big(-\ell_k\big(\data_i - {q_{ik} \over \alpha_{ik}}\big) + &\hspace{-3mm} \indI{\Xi} \big(\data_i - {q_{ik} \over \alpha_{ik}}\big) \Big)\ind{\{\alpha_{ik}>0\}} - \indI{\{0\}}(q_{ik})\ind{\{\alpha_{ik} = 0\}}  \\
			\st & \sum\limits_{i =1}^{N} \sum\limits_{k =1}^{K} \|q_{ik}\| \le \eps \\
						& \sum\limits_{k = 1}^{K} \alpha_{ik} = {1 \over N} &\forall i \le N\\
						& \alpha_{ik} \ge 0 &\forall i \le N, \quad \forall k\le K.
			\end{array} \right. \label{eq:pf:dual-dual:3} 
		\end{align}
		Our conventions of extended arithmetics imply that the $ik$-th term in the objective function of problem~\eqref{eq:pf:dual-dual:3} simplifies to 
		\begin{equation}
		\label{nasty_alpha}
			\alpha_{ik} \ell_k\big(\data_i - {q_{ik} \over \alpha_{ik}}\big) - \indI{\Xi}\big(\data_i - {q_{ik} \over \alpha_{ik}}\big).
		\end{equation} 
		Indeed, for $\alpha_{ik}>0$, this identity trivially holds. For $\alpha_{ik}=0$, on the other hand, the $ik$-th objective term in  \eqref{eq:pf:dual-dual:3} reduces to $- \indI{\{0\}}(q_{ik})$. Moreover, the first term in \eqref{nasty_alpha} vanishes whenever $\alpha_{ik} = 0$ regardless of $q_{ik}$, and the second term in \eqref{nasty_alpha} evaluates to 0 if $q_{ik}=0$ (as $0/0=0$ and $\data_i \in \Xi$) and to $-\infty$ if $q_{ik}\neq 0$ (as $q_{ik}/0$ has at least one infinite component, implying that $\data_i+q_{ik}/0\notin \Xi$). Therefore, \eqref{nasty_alpha} also reduces to $- \indI{\{0\}}(q_{ik})$ when $\alpha_{ik}=0$. This proves that the $ik$-th objective term in \eqref{eq:pf:dual-dual:3} coincides with \eqref{nasty_alpha}. Substituting \eqref{nasty_alpha} into \eqref{eq:pf:dual-dual:3} and re-expressing $- \indI{\Xi}\big(\data_i - {q_{ik} \over \alpha_{ik}}\big)$ in terms of an explicit hard constraint yields
%		Note that by definition $\data_i \in \Xi$, equivalently implying that $\indI{\Xi}(\data_i) = 0$ for all $i\le N$. Thus, under our conventions of extended arithmetics, in particular $0/0 = 0$ and $1/0 = \infty$, we can interpret 
%		\begin{align*}
%			 \indI{\Xi} \big(\data_i - {q_{ik} \over \alpha_{ik}}\big)\ind{\{\alpha_{ik} = 0\}} = \indI{\{0\}}(q_{ik})\ind{\{\alpha_{ik} = 0\}}.
%		\end{align*}
%		In view of the above interpretation and the fact that the function $-\ell_k + \indI{\Xi}$ in the objective of \eqref{eq:pf:dual-dual:3} is proper under Assumption \ref{a:ell}, the program \eqref{eq:pf:dual-dual:3} is simplified to
		%Under our conventions of extended arithmetic, which imply that  $0/0 = 0$ and $1/0 = \infty$, 
		%In view of Assumption \ref{a:ell}, the function $-\ell_k + \indI{\Xi}$ in the objective of \eqref{eq:pf:dual-dual:3} is proper, and thus the indicator function in the objective of \eqref{eq:pf:dual-dual:3} gives rise to a hard constraint, reducing \eqref{eq:pf:dual-dual:3} to
%		In view of Assumption \ref{a:ell}, the function $-\ell_k + \indI{\Xi}$ in the objective of \eqref{eq:pf:dual-dual:3} is proper, and thus bigger than $-\infty$ everywhere. This observation, together with our conventions of extended arithmetics implying $0/0 = 0$ and $1/0 = \infty$, allows us to simply the objective function of \eqref{eq:pf:dual-dual:3} as
		\begin{align}
%		& \left\{ \begin{array}{clll} \Sup{\alpha_{ik}, q_{ik}} & \sum\limits_{i = 1}^{N} \sum\limits_{k = 1}^{K} \alpha_{ik} \ell_k\big(\data_i - {q_{ik} \over \alpha_{ik}}\big) - \indI{\Xi}\big(\data_i - {q_{ik} \over \alpha_{ik}}\big)\\
%						\st & \sum\limits_{i =1}^{N} \sum\limits_{k =1}^{K} \|q_{ik}\| \le \eps \\
%						& \sum\limits_{k = 1}^{K} \alpha_{ik} = {1 \over N} &\forall i \le N\\
%						& \alpha_{ik} \ge 0  &\forall i \le N, \quad \forall k\le K 
%			\end{array}\right. \notag \\%\label{eq:pf:dual-dual:3b}\\
	 	\left\{
			\begin{array}{clll} \Sup{\alpha_{ik}, q_{ik}} & \sum\limits_{i = 1}^{N} \sum\limits_{k = 1}^{K} \alpha_{ik} \ell_k\big(\data_i - {q_{ik} \over \alpha_{ik}}\big) \\
						\st & \sum\limits_{i =1}^{N} \sum\limits_{k =1}^{K} \|q_{ik}\| \le \eps \\
						& \sum\limits_{k = 1}^{K} \alpha_{ik} = {1 \over N} &\forall i \le N\\
						& \alpha_{ik} \ge 0  &\forall i \le N, \quad \forall k\le K \\
						& \data_i - {q_{ik} \over \alpha_{ik}} \in \Xi&\forall i \le N, \quad \forall k\le K.
			\end{array}\right. \label{eq:pf:dual-dual:4}
		\end{align}
		%where \eqref{eq:pf:dual-dual:4} is the reduction of the indicator function in the objective function to a hard constraint. 
		Finally, replacing $\big\{\alpha_{ik}, q_{ik}\big\}$ with ${1 \over N}\big\{\alpha_{ik}, q_{ik}\big\}$ shows that \eqref{eq:pf:dual-dual:4} is equivalent to \eqref{dual-dual}. This completes the first part of the proof.
		
		As for the second claim, let $\{\alpha_{ik}(r), q_{ik}(r)\}_{r \in \N}$ be a sequence of feasible solutions that attains the supremum in~\eqref{dual-dual}, and set $\xi_{ik}(r) \Let \data_i - {q_{ik}(r) \over \alpha_{ik}(r)}\in\Xi$. Then, the discrete distribution
		$$\Pi_r \Let {1 \over N}\sum_{i = 1}^{N}\sum_{k = 1}^{K} \alpha_{ik}(r)\dir{\big(\xi_{ik}(r), \data_i\big)}$$ 
		has the distribution $\Q_r$ defined in the theorem statement and the empirical distribution $\Pem$ as marginals. {By the definition of the Wasserstein metric, $\Pi_r$ represents a feasible mass transportation plan that provides an upper bound on the distance between $\Pem$ and $\Q_r$; see Definition~\ref{def:wass}. Thus, we have }
		%By the dual characterization of the Wasserstein metric, $\Pi_r$ represents a feasible mass transportation plan that provides an upper bound on the distance between $\Pem$ and $\Q_r$; see Theorem~\ref{thm:Kant}. Thus, we have 
		\begin{align*}
			\Wass{\Q_r}{\Pem} &\le \int_{\Xi^2} \|\xi - \xi'\| \, \Pi_r(\diff \xi, \diff \xi') = {1 \over N} \sum\limits_{i = 1}^{N} \sum\limits_{k= 1}^{K} \alpha_{ik}(r) \big\| \xi_{ik}(r) - \data_i \big\|  {= {1 \over N} \sum\limits_{i = 1}^{N} \sum\limits_{k= 1}^{K} \big\| q_{ik}(r)\big\| \le \eps,}
		\end{align*}
		where the last inequality follows readily from the feasibility of $q_{ik}(r)$ in \eqref{dual-dual}. We conclude that
		\begin{align*}
			\sup\limits_{\Q \in \ball{\Pem}{\eps}}\EE^{\Q}\big[\ell(\xi)\big] &\ge  \limsup_{k \ra \infty} \EE^{\Q_r}\big[\ell(\xi)\big] = \limsup_{k \ra \infty} {1 \over N} \sum\limits_{i = 1}^{N}\sum\limits_{k = 1}^{K}\alpha_{ik}(r) \ell\big(\xi_{ik}(r) \big)\\
			& \ge \limsup_{k \ra \infty} {1 \over N} \sum\limits_{i = 1}^{N}\sum\limits_{k = 1}^{K}\alpha_{ik}(r) \ell_k\big(\xi_{ik}(r) \big) = \sup\limits_{\Q \in \ball{\Pem}{\eps}}\EE^{\Q}\big[\ell(\xi)\big],
		\end{align*}
		\end{subequations} 
		where the first inequality holds as $\Q_r \in \ball{\Pem}{\eps}$ for all $k \in \N$, and the second inequality uses the trivial estimate $\ell \geq \ell_k$ for all $k\le K$. The last equality follows from the construction of $\alpha_{ik}(r)$ and $\xi_{ik}(r)$ and the fact that \eqref{dual-dual} coincides with the worst-case expectation~\eqref{dist-rob-Ex}. 
	\end{proof}	

	In the rest of this section we discuss some notable properties of the convex program \eqref{dual-dual}. 

	In the \emph{ambiguity-free} limit, that is, when the radius of the Wasserstein ball is set to zero, then the optimal value of the convex program~\eqref{dual-dual} reduces to the expected loss under the empirical distribution. Indeed, for $\eps = 0$ all $q_{ik}$ variables are forced to zero, and $\alpha_{ik}$ enters the objective only through $\sum_{k=1}^K \alpha_{ik}={1\over N}$. Thus, the objective function of \eqref{dual-dual} simplifies to $ \EE^{\Pem}[\ell(\xi)]$. 
	
	We further emphasize that it is not possible to guarantee the existence of a worst-case distribution that attains the supremum in \eqref{dist-rob-Ex}. In general, as shown in Theorem~\ref{thm:dual-dual}, we can only construct a sequence of distributions that attains the supremum asymptotically. The following example discusses an instance of \eqref{dist-rob-Ex} that admits no worst-case distribution.

\begin{figure}[t!]
	\begin{center}
		\includegraphics[scale = 0.8]{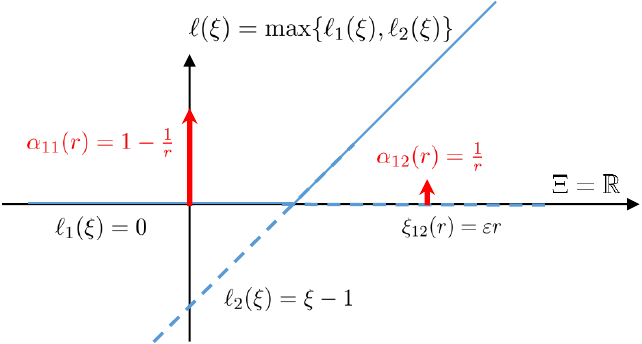}
	\end{center}
	\caption{Example of a worst-case expectation problem without a worst-case distribution}
	\label{fig:Ex}
\end{figure}

	\begin{Ex}[Non-existence of a worst-case distribution]
	\label{ex:worst-case}
		Assume that $\Xi = \R$, $N = 1$, $\data_1 = 0$, $K = 2$, $\ell_1(\xi) =0$ and $\ell_2(\xi) = \xi - 1$. In this case we have $\Pem=\dir{\{0\}}$, and problem \eqref{dual-dual} reduces to
		\begin{align*}
		\sup\limits_{\Q \in \ball{\dir{0}}{\eps}} \EE^\Q \big[ \ell(\xi) \big]  
			 = \left\{ \begin{array}{clll} \Sup{\alpha_{1j}, q_{1j}} & - q_{12} - \alpha_{12} \\
			\st & |q_{11}| + |q_{12}| \le \eps \\
						& \alpha_{11} + \alpha_{12} = 1 \\
						& {\alpha_{11} \ge 0,  \quad \alpha_{12} \ge 0.}
			\end{array} \right. 
		\end{align*}
		The supremum on the right-hand side amounts to $\eps$ and is attained, for instance, by the sequence $\alpha_{11}(r) = 1 - {1 \over k}$, $\alpha_{12}(r) = {1 \over k}$, $q_{11}(r) = 0$, $q_{12}(r) = - \eps$ for $k\in\mathbb N$. Define $$\Q_r = \alpha_{11}(r)\, \dir{\xi_{11}(r)}+\alpha_{12}(r)\, \dir{\xi_{12}(r)},$$ with $\xi_{11}(r) = \data_1 - {q_{11}(r) \over \alpha_{11}(r)}=0,$ and $\xi_{12}(r) = \data_1 - {q_{12}(r) \over \alpha_{12}(r)}=\eps k$. By Theorem \ref{thm:dual-dual}, the two-point distributions $\Q_r$ reside within the Wasserstein ball of radius $\eps$ around $\dir{0}$ and asymptotically attain the supremum in the worst-case expectation problem. However, this sequence has no weak limit as $\xi_{12}(r) = \eps k$ tends to infinity, see Figure~\ref{fig:Ex}. In fact, no single distribution can attain the worst-case expectation. Assume for the sake of contradiction that there exists $\Q^\star\in \ball{\dir{0}}{\eps}$ with $\EE^{\Q^\star}[\ell(\xi)]=\eps$. Then, we find $\eps= \EE^{\Q^\star}[\ell(\xi)]< \EE^{\Q^\star}[|\xi|]\leq \eps$, where the strict inequality follows from the relation $\ell(\xi)<|\xi|$ for all $\xi\neq 0$ and the observation that $\Q^\star\neq\dir{0}$, {while the second inequality follows from Theorem~\ref{thm:KantorovichRubinstein}.} Thus, $\Q^\star$ does not exist.
		%while the second inequality follows from Theorem \ref{thm:Kant} for $\Pem=\dir{0}$. Thus, $\Q^\star$ does not exist.
	\end{Ex}

	The existence of a worst-case distribution can, however, be guaranteed in some special cases.
	
	\begin{Cor}[Existence of a worst-case distribution] \label{cor:worst-case}
		Suppose that Assumption \ref{a:ell} holds. If the uncertainty set $\Xi$ is compact or the loss function is concave (i.e., $K=1$), then the sequence $\{\alpha_{ik}(r), \xi_{ik}(r)\}_{r \in \N}$ constructed in Theorem~\ref{thm:dual-dual} has an accumulation point $\{\alpha^\star_{ik}, \xi^\star_{ik}\}$, and 
		$$\Q^\star \Let {1 \over N}\sum_{i = 1}^{N}\sum_{k = 1}^{K} \alpha^\star_{ik}\dir{\xi^\star_{ik}}$$ 
		is a worst-case distribution achieving the supremum in \eqref{dist-rob-Ex}.
	\end{Cor}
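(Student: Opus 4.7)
The plan is to verify, under each hypothesis, that the sequence $\{\alpha_{ik}(r),\xi_{ik}(r)\}_{r\in\N}$ supplied by Theorem~\ref{thm:dual-dual} admits an accumulation point $\{\alpha^\star_{ik},\xi^\star_{ik}\}$ that (a) is feasible in \eqref{dual-dual} and (b) attains the supremum in \eqref{dual-dual}. Once this is established, the induced distribution $\Q^\star = \tfrac{1}{N}\sum_{i,k}\alpha^\star_{ik}\dir{\xi^\star_{ik}}$ inherits membership in $\ball{\Pem}{\eps}$ by the same mass transportation argument used at the end of the proof of Theorem~\ref{thm:dual-dual}, namely by exhibiting $\Pi^\star = \tfrac{1}{N}\sum_{i,k}\alpha^\star_{ik}\dir{(\xi^\star_{ik},\data_i)}$ as a transport plan with marginals $\Q^\star$ and $\Pem$ and cost $\tfrac{1}{N}\sum_{i,k}\|q^\star_{ik}\|$, where $q^\star_{ik}\Let \alpha^\star_{ik}(\data_i-\xi^\star_{ik})$. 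Since $\ell\geq \ell_k$ for every $k$, the expected value of $\ell$ under $\Q^\star$ dominates the supremum of \eqref{dual-dual}, while feasibility of $\Q^\star$ in $\ball{\Pem}{\eps}$ gives the reverse inequality.

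First I would establish boundedness of $\{\alpha_{ik}(r),\xi_{ik}(r)\}$ and extract a convergent subsequence. When $\Xi$ is compact, $\xi_{ik}(r)\in\Xi$ and $\alpha_{ik}(r)\in[0,1]$ live in compact sets, so a convergent subsequence exists immediately. When $K=1$, the simplex constraint reduces to $\alpha_{i1}(r)=1$, and the budget constraint $\tfrac{1}{N}\sum_i\|q_{i1}(r)\|\leq\eps$ confines $\xi_{i1}(r)=\data_i-q_{i1}(r)$ to a bounded set; closedness of $\Xi$ (from Assumption~\ref{a:ell}) then places the limit inside $\Xi$. Passing to the limit, the conditions $\alpha^\star_{ik}\geq0$ and $\sum_k\alpha^\star_{ik}=1$ survive trivially, and continuity of the norm applied to $q_{ik}(r)=\alpha_{ik}(r)(\data_i-\xi_{ik}(r))\to q^\star_{ik}$ preserves the budget constraint $\tfrac{1}{N}\sum_{i,k}\|q^\star_{ik}\|\leq\eps$; hence $\{\alpha^\star_{ik},\xi^\star_{ik}\}$ is feasible.

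The key remaining step is upper semicontinuity of the objective of \eqref{dual-dual} at the accumulation point. Since $-\ell_k$ is lower semicontinuous by Assumption~\ref{a:ell}, $\ell_k$ itself is upper semicontinuous, so for each $(i,k)$ with $\alpha^\star_{ik}>0$ we get $\limsup_r \alpha_{ik}(r)\ell_k(\xi_{ik}(r))\leq \alpha^\star_{ik}\ell_k(\xi^\star_{ik})$ at once. The delicate case is $\alpha^\star_{ik}=0$, which does not arise when $K=1$ (there $\alpha^\star_{i1}=1$), but must be handled in the compact case: there upper semicontinuity of $\ell_k$ on the compact set $\Xi$ provides a finite uniform upper bound $M$ on $\ell_k$, whence $\alpha_{ik}(r)\ell_k(\xi_{ik}(r))\leq M\alpha_{ik}(r)\to 0$, matching the convention $\alpha^\star_{ik}\ell_k(\xi^\star_{ik})=0$ used in \eqref{dual-dual}. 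Summing over $i$ and $k$ delivers the upper semicontinuity, and combined with the fact that the sequence attains the supremum this forces $\{\alpha^\star_{ik},\xi^\star_{ik}\}$ to be optimal in \eqref{dual-dual}.

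The hard part is precisely the degenerate regime $\alpha^\star_{ik}=0$: the product $\alpha_{ik}(r)\ell_k(\xi_{ik}(r))$ could a priori oscillate or blow up if $\xi_{ik}(r)$ escaped to infinity along a direction on which $\ell_k$ is unbounded from above, and it is exactly to rule out such pathologies that either the compactness of $\Xi$ (which provides a uniform upper bound on each $\ell_k$) or the concavity hypothesis $K=1$ (which eliminates the degenerate regime altogether by fixing $\alpha^\star_{i1}=1$) is needed. Example~\ref{ex:worst-case} shows that without such a hypothesis a worst-case distribution need not exist, so the dichotomy between the two cases in the statement is essentially sharp.
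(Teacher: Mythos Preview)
Your proposal is correct and follows essentially the same approach as the paper's proof: compactness of the parameter set (either from compactness of $\Xi$ or, when $K=1$, from the budget constraint together with $\alpha_{i1}\equiv 1$) yields an accumulation point, and upper semicontinuity of the objective of \eqref{dual-dual} forces this accumulation point to be optimal, after which Theorem~\ref{thm:dual-dual} does the rest. Your treatment is in fact more careful than the paper's terse argument---the paper simply asserts existence of a convergent subsequence in the compact case without spelling out the upper semicontinuity verification, whereas you explicitly handle the degenerate regime $\alpha^\star_{ik}=0$ via the uniform upper bound on $\ell_k$ over compact $\Xi$, which is exactly the step needed to close the argument.
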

	\begin{proof}
		If $\Xi$ is compact, then the sequence $\{\alpha_{ik}(r), \xi_{ik}(r)\}_{r \in \N}$ has a converging subsequence with limit $\{\alpha^\star_{ik},\xi^\star_{ik}\}$. Similarly, if $K = 1$, then $\alpha_{i1} = 1$ for all $i\le N$, in which case \eqref{dual-dual} reduces to a convex optimization problem with an upper semicontinuous objective function over a compact feasible set. Hence, its supremum is attained at a point $\{\alpha^\star_{ik},\xi^\star_{ik}\}$. In both cases, Theorem~\ref{thm:dual-dual} guarantees that the distribution $\Q^\star$ implied by $\{\alpha^\star_{ik},\xi^\star_{ik}\}$ achieves the supremum in \eqref{dist-rob-Ex}. 
	\end{proof}
	
	The worst-case distribution of Corollary~\ref{cor:worst-case} is discrete, and its atoms $\xi^\star_{ik}$ reside in the neighborhood of the given data points $\data_i$. By the constraints of problem~\eqref{dual-dual}, the probability-weighted cumulative distance between the atoms and the respective data points amounts to $$\sum_{i=1}^N\sum_{k=1}^K \alpha_{ik}\| \xi^\star_{ik}-\data_i \| = \sum_{i=1}^N\sum_{k=1}^K \|q_{ik}\|\leq \eps,$$ which is bounded above by the radius of the Wasserstein ball. The fact that the worst-case distribution $\Q^\star$ (if it exists) is supported outside of $\Xiem$ is a key feature distinguishing the Wasserstein ball from the ambiguity sets induced by other probability metrics such as the total variation distance or the Kullback-Leibler divergence; see Figure~\ref{fig:balls}. Thus, the worst-case expectation criterion based on Wasserstein balls advocated in this paper should appeal to decision makers who wish to immunize their optimization problems against perturbations of the data points. 
	
\begin{figure} [t!]
	\centering
	\subfigure[Empirical distribution on a training dataset with $N = 2$ samples]{\label{fig:emp} \includegraphics[width=0.3\columnwidth]{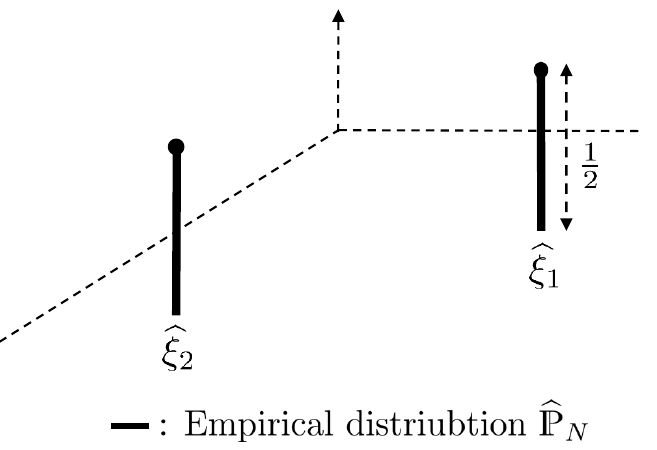}} \hspace{2mm}
	\subfigure[A representative discrete distribution in the total variation or the Kullback-Leiber ball]{\label{fig:TV-KL} \includegraphics[width=0.31\columnwidth]{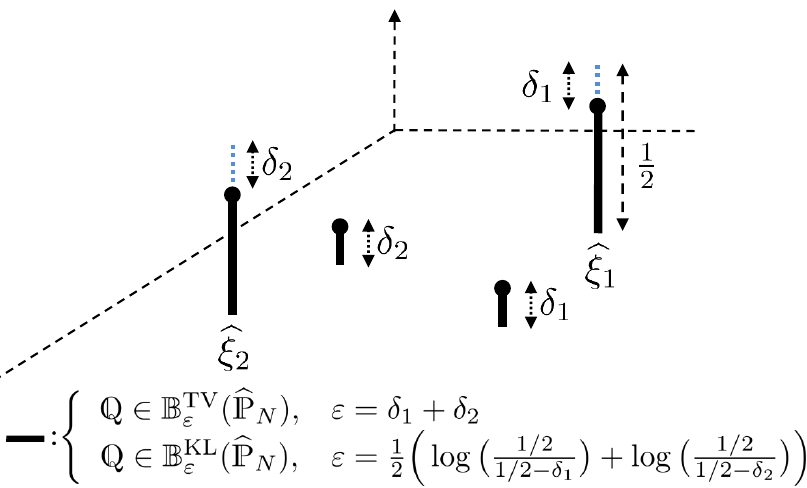}} 
	\hspace{2mm}
	\subfigure[A representative discrete distribution in the Wasserstein ball]{\label{fig:wass} \includegraphics[width=0.31\columnwidth]{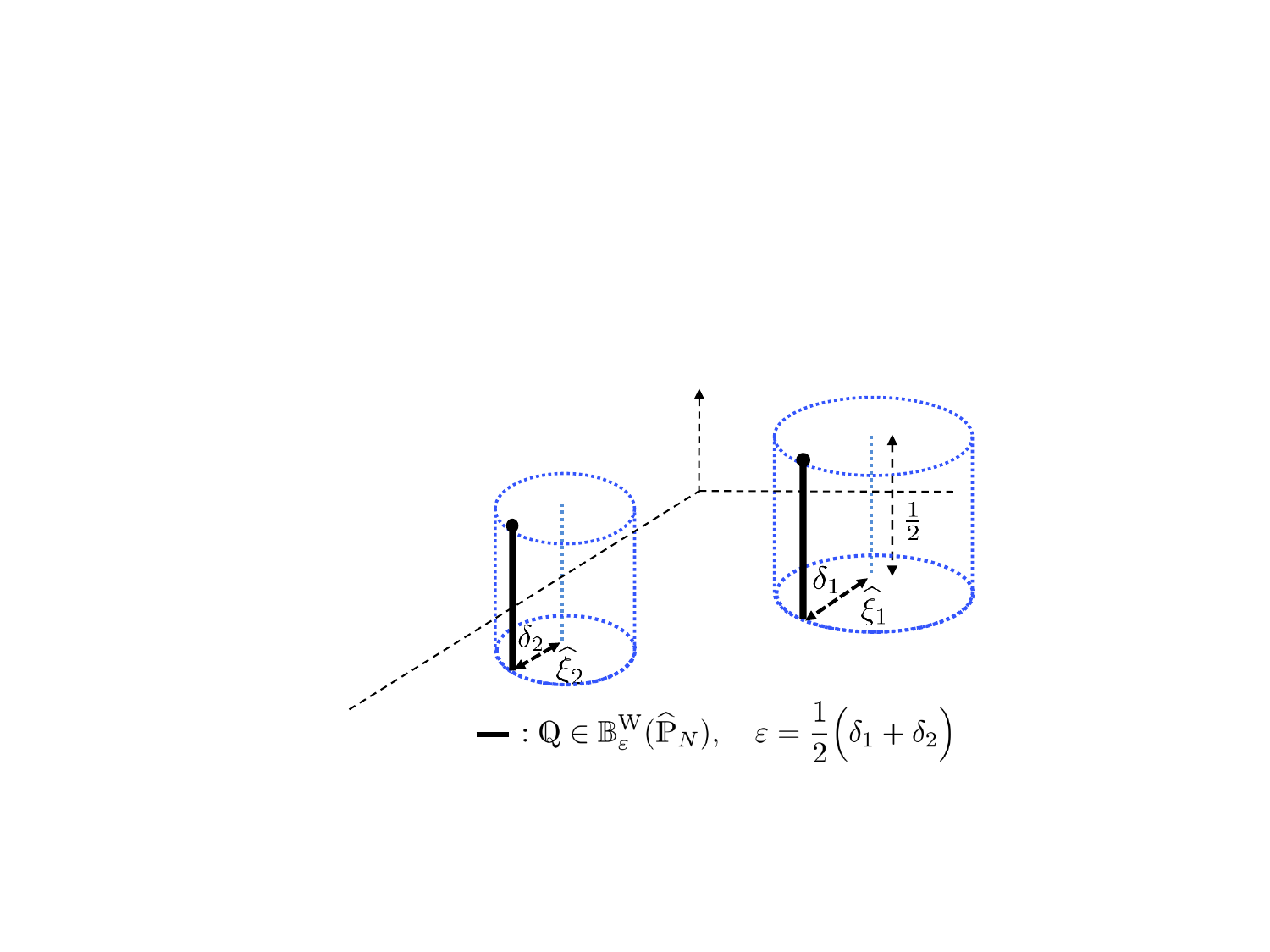}}
	\caption{Representative distributions in balls centered at $\Pem$ induced by different metrics}
	\label{fig:balls}
\end{figure}

	\begin{Rem}[Weak coupling]
		We highlight that the convex program \eqref{dual-dual} is amenable to decomposition and parallelization techniques as the decision variables associated with different sample points are only coupled through the norm constraint. We expect the resulting scenario decomposition to offer a substantial speedup of the solution times for problems involving large datasets. Efficient decomposition algorithms that could be used for solving the convex program \eqref{dual-dual} are described, for example, in \cite{ref:NeaBoyd-14} and \cite[Chapter 4]{ref:Bert-15}.
	\end{Rem}
	
%===============================================================================
	\section{Special Loss Functions}
	\label{sec:cases}
%===============================================================================

	We now demonstrate that the convex optimization problems \eqref{eq:thm-dual:2} and \eqref{dual-dual} reduce to computationally tractable conic programs for several loss functions of practical interest. 

%-------------------------------------------------------------------------------
	\subsection{Piecewise Affine Loss Functions} 
%-------------------------------------------------------------------------------
	We first investigate the worst-case expectations of convex and concave piecewise affine loss functions, which arise, for example, in option pricing \cite{ref:BertPop-00}, risk management \cite{ref:NatSimUich-10} and in generic two-stage stochastic programming~\cite{ref:BertDoaVinNat-10}. Moreover, piecewise affine functions frequently serve as approximations of {\em smooth} convex or concave loss functions. 
	\begin{Cor}[Piecewise affine loss functions]
	\label{cor:affine}
		Suppose that the uncertainty set is a polytope, that is, $\Xi = \{ \xi \in \R^m : C \xi \le d \}$ where $C$ is a matrix and $d$ a vector of appropriate dimensions. Moreover, consider the affine functions $a_k(\xi) \Let \inner{a_{k}}{\xi} + b_{k}$ for all $k\le K$.
		\begin{subequations}
		\label{affine-opt}
		\begin{itemize}
		\item[(i)] If $\ell(\xi)= \max_{k\le K}a_k(\xi)$, then the worst-case expectation \eqref{dist-rob-Ex} evaluates to
		\begin{align}
		\label{affine-max}
		\left\{
					\begin{array}{clll} \inf\limits_{\lambda,s_i, \gamma_{ik}} & \lambda \eps + {1 \over N}\sum\limits_{i = 1}^{N} s_i  \\
					\st & b_k +\inner{a_k}{\data_i}+ \inner{\gamma_{ik}}{d-C\data_i} \le s_i & \forall i \le N, & \forall k \le K\\
								& \|C\tr\gamma_{ik} - a_{k}\|_* \le \lambda & \forall i \le N, & \forall k \le K  \\
								& \gamma_{ik} \ge 0& \forall i \le N, & \forall k \le K .
					\end{array}\right.
		\end{align}
		\item[(ii)] If $\ell(\xi)= \min_{k\le K}a_k(\xi)$, then the worst-case expectation \eqref{dist-rob-Ex} evaluates to
		\begin{align}
		\label{affine-min}
		\left\{\begin{array}{clll} \inf\limits_{\lambda,s_i, \gamma_{i},\theta_{i}} \hspace{-1ex}& \lambda \eps + {1 \over N}\sum\limits_{i = 1}^{N} s_i  \\
					\st &\inner{\theta_i}{b+ A\data_i}+\inner{\gamma_{i}}{d- C\data_i} \le s_i & \forall i \le N\\
								& \|C\tr \gamma_i-A\tr\theta_i\|_* \le \lambda & \forall i \le N \\
								& \inner{\theta_{i}}{\One} = 1 & \forall i \le N\\
								& \gamma_{i}\ge 0& \forall i \le N\\
								& \theta_{i} \ge 0& \forall i \le N,
					\end{array}\right.
		\end{align}
		where $A$ is the matrix with rows $a\tr_k$, $k\le K$, $b$ is the column vector with entries $b_k$, $k\le K$, and $\One$ is the vector of all ones.
		\end{itemize}
		\end{subequations}
	\begin{proof}
		Assertion~(i) is an immediate consequence of Theorem~\ref{thm:dist-rob-opt}, which applies because $\ell(x)$ is the pointwise maximum of the affine functions $\ell_k(\xi)= a_k(\xi)$, $k\le K$, and thus Assumption~\ref{a:ell} holds for $J= K$. 
		By definition of {the} conjugacy operator, we have
		\begin{align*}
			[-\ell_k]^*(z) =[-a_k]^*(z) = \sup\limits_{\xi} \inner{z}{\xi} + \inner{a_k}{\xi} + b_k=\left\{\begin{array}{cl} 
							b_k & \text{if }z=-a_{k}, \\
							\infty & \text{else,}
						\end{array}
						\right.
			\end{align*}
			and
			\begin{align*}	
			\sigma_\Xi(\nu) = \left\{ \begin{array}{cl} \sup\limits_{\xi} & \inner{\nu}{\xi} \\ \st & C \xi \le d \end{array}\right. =  \left\{ \begin{array}{cl} \inf\limits_{\gamma\ge 0} & \inner{\gamma}{d} \\ \st & C\tr \gamma= \nu, \end{array} \right. 
		\end{align*}
		where the last equality follows from strong duality, which holds as the uncertainty set is non-empty. Assertion~(i) then follows by substituting the above expressions into \eqref{eq:thm-dual:2}. 
		
		Assertion~(ii) also follows directly from Theorem~\ref{thm:dist-rob-opt} because $\ell(\xi)=\ell_1(\xi)=  \min_{k\le K}a_j(\xi)$ is concave and thus satisfies Assumption~\ref{a:ell} for $J=1$. In this setting, we find
			\begin{align*}
			[-\ell]^*(z) &= \sup\limits_{\xi} \inner{z}{\xi} + \min_{k\le K}\Big\{ \inner{a_k}{\xi} + b_k\Big\} 
			= \left\{ \begin{array}{cl} \Sup{\xi,\tau} & \inner{z}{\xi} +\tau \\ \st & A{\xi} + b \geq \tau \One \end{array}\right. 
			= \left\{ \begin{array}{cl} \Inf{\theta \geq 0} & \inner{\theta}{b} \\ \st & A\tr \theta = -z \\ & \inner{\theta}{\One}  = 1 \end{array} \right.
			\end{align*}
			where the last equality follows again from strong linear programming duality, which holds since the primal maximization problem is feasible. Assertion~(ii) then follows by substituting $[-\ell]^*$ as well as the formula for $\sigma_\Xi$ from the proof of assertion~(i) into \eqref{eq:thm-dual:2}. 
	\end{proof}
	\end{Cor}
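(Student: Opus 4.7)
The plan is to derive both assertions as direct consequences of Theorem~\ref{thm:dist-rob-opt}, which reduces the worst-case expectation \eqref{dist-rob-Ex} to the finite convex program \eqref{eq:thm-dual:2}. The only case-specific work is computing the conjugate $[-\ell_k]^*$ and the support function $\sigma_\Xi$ for the polytope $\Xi = \{\xi : C\xi \le d\}$, and then eliminating the auxiliary variables $\nu_{ik}$ appearing in \eqref{eq:thm-dual:2}. Both loss functions trivially satisfy Assumption~\ref{a:ell}: in~(i) we apply the theorem with $K$ concave (in fact affine) pieces $\ell_k = a_k$, while in~(ii) we note that the pointwise minimum of affine functions is itself concave, so we may apply the theorem with a single piece ($K=1$).

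For assertion~(i), I would compute $[-a_k]^*(z) = \sup_\xi \langle z,\xi\rangle + \langle a_k,\xi\rangle + b_k$, which equals $b_k$ if $z = -a_k$ and $+\infty$ otherwise. Separately, LP duality applied to $\sigma_\Xi(\nu) = \sup\{\langle \nu,\xi\rangle : C\xi \le d\}$ (which holds because the polytope is non-empty) gives $\sigma_\Xi(\nu) = \inf\{\langle \gamma, d\rangle : C^\top \gamma = \nu,\ \gamma \ge 0\}$. Substituting these into the constraint $[-\ell_k]^*(z_{ik}-\nu_{ik}) + \sigma_\Xi(\nu_{ik}) - \langle z_{ik},\data_i\rangle \le s_i$ of \eqref{eq:thm-dual:2} forces $z_{ik} - \nu_{ik} = -a_k$, so $\nu_{ik} = z_{ik} + a_k$ and the inequality becomes $b_k + \langle \gamma_{ik}, d\rangle - \langle z_{ik}, \data_i\rangle \le s_i$ after introducing $\gamma_{ik} \ge 0$ with $C^\top \gamma_{ik} = z_{ik} + a_k$. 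Eliminating $z_{ik} = C^\top \gamma_{ik} - a_k$ from the dual-norm constraint and simplifying $\langle z_{ik}, \data_i\rangle$ yields \eqref{affine-max}.

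For assertion~(ii), the single loss function $-\ell(\xi) = -\min_k a_k(\xi) = \max_k(-a_k(\xi))$ is convex, and I would compute its conjugate by writing the inner minimum as an epigraph: $[-\ell]^*(z) = \sup_\xi \langle z,\xi\rangle + \min_k\{\langle a_k,\xi\rangle + b_k\} = \sup_{\xi,\tau}\{\langle z,\xi\rangle + \tau : A\xi + b \ge \tau\One\}$, where $A$ has rows $a_k^\top$. Strong LP duality (valid since this maximization is feasible) converts this into $\inf_{\theta \ge 0}\{\langle \theta,b\rangle : A^\top \theta = -z,\ \langle \theta,\One\rangle = 1\}$. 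Plugging this along with the same $\sigma_\Xi$ expression into \eqref{eq:thm-dual:2} and eliminating $z_{i1} = A^\top\theta_i - C^\top\gamma_i$ (with appropriate sign bookkeeping, since $z_{i1} - \nu_{i1}$ appears inside $[-\ell]^*$ while $\nu_{i1}$ appears inside $\sigma_\Xi$) produces \eqref{affine-min}.

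The routine step is the algebraic substitution; the main thing to be careful about is the bookkeeping around the auxiliary variable $\nu_{ik}$, in particular keeping track of which sign conventions make $z_{ik}$ versus $\nu_{ik}$ take the role of the dual multiplier of the polytope constraint and the role of the slope identifier. Another mild subtlety is verifying that strong LP duality applies to $\sigma_\Xi$ (which requires $\Xi$ non-empty) and to the conjugate computation in~(ii) (which requires feasibility of the epigraphical reformulation, trivial here); both are immediate from the problem data. Apart from that, the derivation is a mechanical specialization of Theorem~\ref{thm:dist-rob-opt}.
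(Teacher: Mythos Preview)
Your proposal is correct and follows essentially the same approach as the paper: apply Theorem~\ref{thm:dist-rob-opt} directly, compute $[-\ell_k]^*$ and $\sigma_\Xi$ via LP duality, and substitute into \eqref{eq:thm-dual:2}. The only minor slip is the sign in your elimination for part~(ii): tracing the constraints $A^\top\theta_i = -(z_{i1}-\nu_{i1})$ and $C^\top\gamma_i = \nu_{i1}$ gives $z_{i1} = C^\top\gamma_i - A^\top\theta_i$ rather than the reverse, but you already flagged the sign bookkeeping as the point requiring care, and the correct sign is what appears in \eqref{affine-min}.
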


	 As a consistency check, we ascertain that in the {\em ambiguity-free limit}, the optimal value of \eqref{affine-max} reduces to the expectation of $\max_{k\le K}a_k(\xi)$ under the empirical distribution. Indeed, for $\eps = 0$, the variable $\lambda$ can be set to any positive value at no penalty. For this reason and because all training samples must belong to the uncertainty set ({\em i.e.}, $d-C\data_i\geq 0$ for all $i\le N$), it is optimal to set $\gamma_{ik}=0$. This in turn implies that $s_i= \max_{k\le K}a_k(\data_i)$ at optimality, in which case $\frac{1}{N}\sum_{i=1}^Ns_i$ represents the sample average of the convex loss function at hand. 

An analogous argument shows that, for $\eps=0$, the optimal value of \eqref{affine-min} reduces to the expectation of $\min_{k\le K}a_k(\xi)$ under the empirical distribution. As before, $\lambda$ can be increased at no penalty. Thus, we conclude that $\gamma_i=0$ and 
	\[
	s_i=\min\limits_{\theta_i\geq 0}\left\{\inner{\theta_i}{b+ A\data_i}:\inner{\theta_{i}}{\One} = 1 \right\} = \min_{k\le K}a_k(\data_i)
	\]
at optimality, in which case $\frac{1}{N}\sum_{i=1}^Ns_i$ is the sample average of the given concave loss function.

%-------------------------------------------------------------------------------
	\subsection{Uncertainty Quantification}
	\label{subsec:UQ}	
%-------------------------------------------------------------------------------

	A problem of great practical interest is to ascertain whether a physical, economic or engineering system with an uncertain state $\xi$ satisfies a number of safety constraints with high probability. In the following we denote by $\set{A}$ the set of states in which the system is safe. Our goal is to quantify the probability of the event $\xi\in\set A$ ($\xi\notin\set A$)  under an ambiguous state distribution that is only indirectly observable through a finite training dataset. More precisely, we aim to calculate the {\em worst-case} probability of the system being {\em unsafe}, {\em i.e.},
	\begin{subequations}
	\label{UQ}
	\begin{align}
	\label{worst-case-prob}
	\sup_{\Q \in \ball{\Pem}{\eps}} \Q \left[ \xi\notin \set{A}\right],
	\end{align}
as well as the {\em best-case} probability of the system being {\em safe}, that is,
	\begin{align}
	\label{best-case-prob}
	\sup_{\Q \in \ball{\Pem}{\eps}} \Q \left[ \xi\in \set{A}\right].
	\end{align}
	\end{subequations}

{ \begin{Rem}[Data-dependent sets]
		The set $\set A$ may even depend on the samples $\data_1,\ldots,\data_N$, in which case $\set A$ is renamed as $\wh{\set A}$. If the Wasserstein radius $\eps$ is set to $\eps_N(\beta)$, then we have $\PP\in \ball{\Pem}{\eps}$ with probability $1-\beta$, implying that \eqref{worst-case-prob} and \eqref{best-case-prob} still provide $1-\beta$ confidence bounds on $\PP[\xi\notin\wh{\set A}]$ and $\PP[\xi\in\wh{\set A}]$, respectively. 
		\end{Rem}}

	\begin{Cor}[Uncertainty quantification]
	\label{cor:chance}
		Suppose that the uncertainty set is a polytope of the form $\Xi = \{ \xi \in \R^m : C \xi \le d \}$ as in Corollary~\ref{cor:affine}. 	
		\begin{subequations}
		\label{chance}	
		\begin{itemize}
		\item[(i)] If $\set A = \{\xi \in \R^m: A\xi < b\}$ is an open polytope and the halfspace $\big\{\xi:\inner{a_k}{\xi}\geq b_k \big\}$ has a nonempty intersection with $\Xi$ for any $k\le K$, where $a_k$ is the $k$-th row of the matrix $A$ and $b_k$ is the $k$-th entry of the vector $b$, then the worst-case probability \eqref{worst-case-prob} is given by
		\begin{align}
		\label{chance-worst}
		\left\{
					\begin{array}{clll} \inf\limits_{\lambda,s_i, \gamma_{ik},\theta_{ik}} & \lambda \eps + {1 \over N}\sum\limits_{i = 1}^{N} s_i  \\
					\st &1-\theta_{ik}\big(b_k-\inner{a_k}{\data_i} \big) +\inner{\gamma_{ik}}{d-C\data_i} \le s_i & \forall i \le N, & \forall k \le K\\
								&  \|a_k\theta_{ik}-C\tr\gamma_{ik}\|_* \le \lambda & \forall i \le N, & \forall k \le K \\
								&  \gamma_{ik}\ge 0& \forall i \le N, & \forall k \le K\\
								&  \theta_{ik} \ge 0& \forall i \le N, & \forall k \le K\\
								& s_i \ge 0 &  \forall i \le N.
					\end{array}\right.
		\end{align}
		\item[(ii)] If $\set A = \{\xi  \in \R^m : A\xi \le b\}$ is a closed polytope that has a nonempty intersection with $\Xi$, then the best-case probability \eqref{best-case-prob} is given by
\begin{align}
		\label{chance-best}
		\left\{\begin{array}{clll} \inf\limits_{\lambda,s_i, \gamma_i, \theta_i} & \lambda \eps + {1 \over N}\sum\limits_{i = 1}^{N} s_i  \vspace{1mm} \\
				\st & 1+\inner{\theta_i}{b - A\data_i} + \inner{\gamma_{i}}{d - C\data_i} \le s_i & \forall i \le N \\
				& \|A\tr \theta_i+C\tr \gamma_{i}\|_* \le \lambda & \forall i \le N \\
				& \gamma_i \ge 0 & \forall i \le N\\
				& \theta_{i} \ge 0 & \forall i \le N\\
				& s_i\ge 0 & \forall i \le N.
			\end{array}\right.
		\end{align}
		\end{itemize}
		\end{subequations}
	\begin{proof}
		{The uncertainty quantification problems \eqref{worst-case-prob} and \eqref{best-case-prob} can be interpreted as instances of \eqref{dist-rob-Ex} with loss functions $\ell = 1 - \ind{\set A}$ and $\ell = \ind{\set A}$, respectively. In order to be able to apply Theorem~\ref{thm:dist-rob-opt}, we should represent these loss functions as finite maxima of concave functions as shown in Figure \ref{fig:ind}.}
		
		Formally, assertion~(i) follows from Theorem~\ref{thm:dist-rob-opt} for a loss function with $K+1$ pieces if we use the following definitions. For every $k\le K$ we define 
		$$\ell_{k}(\xi) =
		\left\{\begin{array}{cl} 1 & \text{if }\inner{a_k}{\xi} \ge b_k, \\ -\infty & \text{otherwise.} \end{array}\right.$$
		Moreover, we define $\ell_{K+1}(\xi) = 0$. {As illustrated in Figure~\ref{fig:ind:out}}, we thus have $\ell(\xi)=\max_{k\le K+1} \ell_k(\xi)= 1 - \ind{\set A}(\xi)$ and
		\[
		\sup\limits_{\Q \in \ball{\Pem}{\eps}} \Q \left[ \xi\notin \set{A}\right]~= \sup\limits_{\Q \in \ball{\Pem}{\eps}} \EE^\Q \left[\ell(\xi)\right].
		\]
		Assumption~\ref{a:ell} holds due to the postulated properties of $\set A$ and $\Xi$. In order to apply Theorem~\ref{thm:dist-rob-opt}, we must determine the support function $\sigma_\Xi$, which is already known from Corollary~\ref{cor:affine}, as well as the conjugate functions of $-\ell_k$, $k\le K+1$. A standard duality argument yields
		\begin{align*}
			[-\ell_k]^*(z) & = \left\{ \begin{array}{cl} \Sup{\xi} & \inner{z}{\xi} + 1 \\ \st & \inner{a_k}{\xi}\geq b_k  \end{array}\right. 
			= \left\{ \begin{array}{cl} \Inf{\theta \ge 0} & 1 - b_k\theta \\ \st & a_k \theta =-z, \end{array}\right. 
		\end{align*}
		for all $k\le K$. Moreover, we have {$[-\ell_{K+1}]^* = 0$ if $\xi=0$; $=\infty$ otherwise}. Assertion~(ii) then follows by substituting the formulas for $[-\ell_k]^*$, $k\le K+1$, and $\sigma_\Xi$ into \eqref{eq:thm-dual:2}. 
		
		Assertion~(ii) follows from Theorem~\ref{thm:dist-rob-opt} by setting $K= 2$, $\ell_1(\xi) =  1-\chi_{\set A}(\xi)$ and $\ell_2(\xi) = 0$. As illustrated in {Figure~\ref{fig:ind:in}}, this implies that $\ell(\xi)=\max\{\ell_1(\xi),\ell_2(\xi)\}=\ind{\set A}(\xi)$ and
		\[
		\sup\limits_{\Q \in \ball{\Pem}{\eps}} \Q \left[ \xi\in \set{A}\right]~= \sup\limits_{\Q \in \ball{\Pem}{\eps}} \EE^\Q \left[\ell(\xi)\right].
		\]
		Assumption~\ref{a:ell} holds by our assumptions on $\set A$ and $\Xi$. In order to apply Theorem~\ref{thm:dist-rob-opt}, we thus have to determine the support function $\sigma_\Xi$, which was already calculated in Corollary~\ref{cor:affine}, and the conjugate functions of $-\ell_1$ and $-\ell_2$. By the definition of the conjugacy operator, we find
		{
		\begin{align*}
			[-\ell_1]^*(z) &= \sup_{\xi \in \set{A}} \inner{z}{\xi} + 1 = \left\{ \begin{array}{cl} \Sup{\xi} & \inner{z}{\xi} + 1 \\ \st & A\xi \le b \end{array}\right. 
			= \left\{ \begin{array}{cl} \Inf{\theta_k \ge 0} & \inner{\theta}{b} + 1 \\ \st & A\tr \theta = z \end{array}\right.
		\end{align*}
		}where the last equality follows from strong linear programming duality, which holds as the safe set is non-empty. Similarly, we find {$[-\ell_{2}]^* = 0$ if $\xi=0$; $=\infty$ otherwise}. Assertion~(ii) then follows by substituting the above expressions into \eqref{eq:thm-dual:2}. 
	\end{proof}
	\end{Cor}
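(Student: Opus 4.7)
\smallskip

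\noindent\textbf{Proof plan.} The plan is to interpret both uncertainty quantification problems as instances of the generic worst-case expectation problem~\eqref{dist-rob-Ex} and then invoke Theorem~\ref{thm:dist-rob-opt} after encoding the respective indicator-type loss functions as pointwise maxima of concave constituents so that Assumption~\ref{a:ell} is satisfied. Concretely, for assertion~(i) I would write $\ell = 1-\ind{\set A}$ as a maximum of $K+1$ pieces by setting $\ell_k(\xi)=1$ on the halfspace $\{\xi:\inner{a_k}{\xi}\ge b_k\}$ and $-\infty$ elsewhere for $k\le K$, together with the trivial piece $\ell_{K+1}\equiv 0$. For assertion~(ii) I would take $K=2$ with $\ell_1=1-\chi_{\set A}$ (which is $1$ on the closed polytope $\set A$ and $-\infty$ off it) and the trivial piece $\ell_2\equiv 0$. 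A brief case-by-case check confirms that both representations reproduce $1-\ind{\set A}$ and $\ind{\set A}$ pointwise, and that the nonempty-intersection hypotheses on $\set A$ and $\Xi$ ensure properness of every $-\ell_k$ restricted to $\Xi$, so Assumption~\ref{a:ell} holds.

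Once the constituents are in place, the main computational step is to determine the conjugate functions $[-\ell_k]^*$ and to recall the support function of the polytope $\Xi$ from the proof of Corollary~\ref{cor:affine}, namely $\sigma_\Xi(\nu)=\inf_{\gamma\ge 0}\{\inner{\gamma}{d}:C\tr\gamma=\nu\}$. The nontrivial conjugates are obtained by solving an elementary LP and dualizing: for part~(i) one finds $[-\ell_k]^*(z)=\inf_{\theta\ge 0}\{1-b_k\theta:a_k\theta=-z\}$, and for part~(ii) one gets $[-\ell_1]^*(z)=\inf_{\theta\ge 0}\{1+\inner{\theta}{b}:A\tr\theta=z\}$. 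The trivial pieces contribute $[-\ell]^*=\indI{\{0\}}$, which forces $z=\nu=0$ in the corresponding block of constraints in~\eqref{eq:thm-dual:2}.

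With these ingredients, I would substitute into the template~\eqref{eq:thm-dual:2}. For every active index $k\le K$ in~(i) (resp.\ for $k=1$ in~(ii)), the infima hidden inside the conjugates and in $\sigma_\Xi$ can be absorbed into the outer infimum over $\lambda,s_i$ by introducing the dual multipliers $\theta_{ik}$ and $\gamma_{ik}$ as new decision variables. The identity $z_{ik}=\nu_{ik}-a_k\theta_{ik}$ from the conjugate of $-\ell_k$ combined with $\nu_{ik}=C\tr\gamma_{ik}$ from $\sigma_\Xi$ yields the explicit substitution $z_{ik}=C\tr\gamma_{ik}-a_k\theta_{ik}$ (and analogously $z_{i}=A\tr\theta_i+C\tr\gamma_i$ in part~(ii)), producing exactly the norm constraint $\|C\tr\gamma_{ik}-a_k\theta_{ik}\|_*\le\lambda$ after eliminating $z_{ik}$. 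The remaining scalar inequality reorganizes into $1-\theta_{ik}(b_k-\inner{a_k}{\data_i})+\inner{\gamma_{ik}}{d-C\data_i}\le s_i$, matching the claimed system~\eqref{chance-worst}; an analogous rearrangement gives~\eqref{chance-best}.

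The trivial piece deserves one last comment: since $[-\ell_{K+1}]^*=\indI{\{0\}}$ forces $z_{i,K+1}=\nu_{i,K+1}=0$, the associated row of~\eqref{eq:thm-dual:2} reduces to $\sigma_\Xi(0)\le s_i$, i.e.~$s_i\ge 0$, which supplies the final nonnegativity constraints in both~\eqref{chance-worst} and~\eqref{chance-best}. The step I expect to require the most care is the bookkeeping of the extended arithmetic (verifying that the nonempty-intersection assumptions on the halfspaces $\{\inner{a_k}{\xi}\ge b_k\}\cap\Xi$ in~(i) and on $\set A\cap\Xi$ in~(ii) are exactly what is needed for strong LP duality so that the dual infima in the conjugates are attained and finite) and the clean elimination of the $z_{ik}$ and $\nu_{ik}$ variables; everything else is essentially a direct specialization of Theorem~\ref{thm:dist-rob-opt}.
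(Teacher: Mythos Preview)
Your proposal is correct and follows essentially the same approach as the paper: the same decomposition of $1-\ind{\set A}$ into $K+1$ concave pieces (halfspace indicators plus the zero function) for part~(i), the same two-piece decomposition $\ell_1=1-\chi_{\set A}$, $\ell_2\equiv 0$ for part~(ii), the same LP-duality computation of the conjugates, and the same substitution into~\eqref{eq:thm-dual:2}. Your treatment is in fact slightly more explicit than the paper's in spelling out how the trivial piece produces the constraint $s_i\ge 0$ and how the variables $z_{ik},\nu_{ik}$ are eliminated.
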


		\begin{figure}[t!]
		\centering
			\subfigure[Indicator function of the unsafe set]{\label{fig:ind:out}\includegraphics[scale = 0.55]{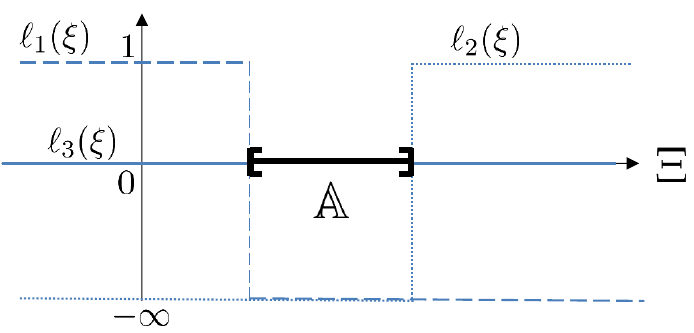}} \qquad
			\subfigure[Indicator function of the safe set]{\label{fig:ind:in}\includegraphics[scale = 0.55]{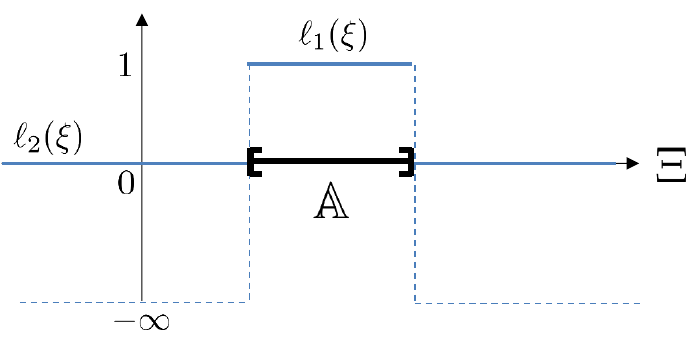}} 
		\caption{Representing the indicator function of {a convex set and its complement as a pointwise maximum of concave functions}}
		\label{fig:ind}
		\end{figure}

	 In the \emph{ambiguity-free limit} ({\em i.e.}, for $\eps = 0$) the optimal value of \eqref{chance-worst} reduces to the fraction of training samples residing outside of the open polytope $\set A=\{\xi:A\xi <b\}$. Indeed, in this case the variable $\lambda$ can be set to any positive value at no penalty. For this reason and because all training samples belong to the uncertainty set ({\em i.e.}, $d-C\data_i\geq 0$ for all $i\le N$), it is optimal to set $\gamma_{ik}=0$. If the $i$-th training sample belongs to $\set A$ ({\em i.e.}, $b_k-\inner{a_k}{\data_i}> 0$ for all $k\le K$), then $\theta_{ik}\geq1/(b_k-\inner{a_k}{\data_i})$ for all $k\le K$ and $s_i=0$ at optimality. Conversely, if the $i$-th training sample belongs to the complement of $\set A$, ({\em i.e.}, $b_k-\inner{a_k}{\data_i}\leq 0$ for some $k\le K$), then $\theta_{ik}=0$ for some $k\le K$ and $s_i=1$ at optimality. Thus, $\sum_{i=1}^Ns_i$ coincides with the number of training samples outside of $\set A$ at optimality. An analogous argument shows that, for $\eps=0$, the optimal value of \eqref{chance-best} reduces to the fraction of training samples residing inside of the closed polytope $\set A=\{\xi:A\xi \leq b\}$.

%-------------------------------------------------------------------------------
	\subsection{Two-Stage Stochastic Programming} 
%-------------------------------------------------------------------------------

	A major challenge in linear two-stage stochastic programming is to evaluate the expected recourse costs, which are only implicitly defined as the optimal value of a linear program whose coefficients depend linearly on the uncertain problem parameters \cite[Section~2.1]{ref:Shap&Dent&Rusz}. The following corollary shows how we can evaluate the worst-case expectation of the recourse costs with respect to an ambiguous parameter distribution that is only observable through a finite training dataset. For ease of notation and without loss of generality, we suppress here any dependence on the first-stage decisions.

	\begin{Cor}[Two-stage stochastic programming]
	\label{cor:2stage}
		Suppose that the uncertainty set is a polytope of the form $\Xi = \{ \xi \in \R^m : C \xi \le d \}$ as in Corollaries~\ref{cor:affine} and \ref{cor:chance}. 	
		\begin{subequations}
		\label{2stage}	
		\begin{itemize}
		\item[(i)] If $\ell(\xi) =\inf_{y} \left\{ \inner{y}{Q\xi} : Wy\ge h \right\}$ is the optimal value of a parametric linear program with objective uncertainty, and if the feasible set $\{y:Wy\ge h\}$ is non-empty and compact, then the worst-case expectation \eqref{dist-rob-Ex} is given by
		\begin{align}
		\label{objective-uncertainty}
		\left\{
			\begin{array}{clll} \inf\limits_{\lambda,s_i, \gamma_i, y_i} & \lambda \eps + {1 \over N}\sum\limits_{i = 1}^{N} s_i  \vspace{1mm} \\
				\st & \inner{y_i}{Q\data_i} + \inner{\gamma_{i}}{d - C\data_i} \le s_i & \forall i \le N \\
				& Wy_i\ge h & \forall i \le N\\
				& \|Q\tr y_i-C\tr \gamma_{i}\|_* \le \lambda & \forall i \le N \\
				& \gamma_i \ge 0 & \forall i \le N.
			\end{array}\right.
		\end{align}
		\item[(ii)] If $\ell(\xi) =\inf_{y} \left\{ \inner{q}{y} : Wy \ge H\xi + h \right\}$ is the optimal value of a parametric linear program with right-hand side uncertainty, and if the dual feasible set $\{\theta\ge 0:W\tr\theta=q\}$ is non-empty and compact with vertices $v_k$, $k\le K$, {then} the worst-case expectation \eqref{dist-rob-Ex} is given~by
		\begin{align}
		\label{rhs-uncertainty}
		\left\{
					\begin{array}{clll} \inf\limits_{\lambda,s_i, \gamma_{ik}} & \lambda \eps + {1 \over N}\sum\limits_{i = 1}^{N} s_i  \\
					\st & \inner{v_k}{h} + \inner{H\tr v_k}{\data_i}+ \inner{\gamma_{ik}}{d-C\data_i} \le s_i & \forall i \le N, & \forall k \le K\\
								& \|C\tr\gamma_{ik}-H\tr v_k\|_* \le \lambda & \forall i \le N, & \forall k \le K  \\
								& \gamma_{ik} \ge 0& \forall i \le N, & \forall k \le K.
					\end{array}\right.
		\end{align}
		\end{itemize}
		\end{subequations}
		\begin{proof}
		Assertion~(i) follows directly from Theorem~\ref{thm:dist-rob-opt} because $\ell(\xi)$ is concave as an infimum of linear functions in $\xi$. Indeed, the compactness of the feasible set $\{y: Wy\ge h\}$ ensures that Assumption~\ref{a:ell} holds for $K=1$. In this setting, we find
		\begin{align*}
			[-\ell]^*(z)&  = \sup\limits_{\xi} \left\{ \inner{z}{\xi} + \inf\limits_{y} \left\{  \inner{y}{Q\xi} : Wy\ge h\right\}\right\} \\
			&= \inf\limits_{y}\left\{  \sup\limits_{\xi}\left\{ \inner{z+Q\tr y}{\xi} \right\} : Wy\ge h \right\}\\
			&=  \left\{\begin{array}{cl} 0 & \text{if there exists $y$ with } Q\tr y=-z \text{ and }Wy\ge h,\\ \infty & \text{otherwise,} \end{array} \right.
		\end{align*}
		where the second equality follows from the classical minimax theorem \cite[Proposition 5.5.4]{ref:Bert-09}, which applies because $\{y: Wy\ge h\}$ is compact. Assertion~(i) then follows by substituting $[-\ell]^*$ as well as the formula for $\sigma_\Xi$ from Corollary~\ref{cor:affine} into \eqref{eq:thm-dual:2}. 

	Assertion~(ii) relies on the following reformulation of the loss function,		
	\begin{align*}
	\ell(\xi) &= \left\{ \begin{array}{cl} \inf\limits_{y} & \inner{q}{y} \\ \st & Wy \ge H\xi + h \end{array}\right. 
	= \left\{ \begin{array}{cl} \sup\limits_{\theta\geq 0} & \inner{\theta}{H\xi + h} \\ \st & W\tr \theta = q \end{array}\right. = \max\limits_{k\le K}  \inner{v_k}{H\xi + h} \\ &= \max\limits_{k\le K}  \inner{H\tr v_k}{\xi} + \inner{v_k}{h},
	\end{align*}
	where the first equality holds due to strong linear programming duality, which applies as the dual feasible set is non-empty. The second equality exploits the elementary observation that the optimal value of a linear program with non-empty, compact feasible set is always adopted at a vertex. As we managed to express $\ell(\xi)$ as a pointwise maximum of linear functions, assertion~(ii) follows immediately from Corllary~\ref{cor:affine}~(i).
	\end{proof}
	\end{Cor}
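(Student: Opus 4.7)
The plan is to reduce each assertion to an application of the general convex reduction of Theorem~\ref{thm:dist-rob-opt}, after rewriting $\ell(\xi)$ in a form that fits Assumption~\ref{a:ell}. Since the support function $\sigma_\Xi$ of the polytope $\Xi = \{\xi : C\xi \le d\}$ has already been computed in the proof of Corollary~\ref{cor:affine} via linear programming duality as $\sigma_\Xi(\nu) = \inf_{\gamma\ge 0}\{\inner{\gamma}{d} : C\tr\gamma = \nu\}$, the only nontrivial work in each part is to identify the conjugate $[-\ell]^*$ (or $[-\ell_k]^*$) and substitute into the template~\eqref{eq:thm-dual:2}.

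For assertion~(i), I would first observe that $\ell(\xi)$ is concave in $\xi$, being an infimum of a family of linear functions indexed by the feasible set $\{y : Wy \ge h\}$. Moreover, this set is nonempty and compact by hypothesis, which guarantees that $\ell(\xi)$ is finite and proper everywhere, so Assumption~\ref{a:ell} is satisfied with $K=1$. The conjugate can then be evaluated as
\[
[-\ell]^*(z) = \sup_\xi \bigl\{ \inner{z}{\xi} + \inf_y \{\inner{y}{Q\xi} : Wy \ge h\} \bigr\} = \inf_y \sup_\xi \bigl\{ \inner{z + Q\tr y}{\xi} : Wy \ge h\bigr\},
\]
where the interchange of $\sup_\xi$ and $\inf_y$ is justified by the classical minimax theorem \cite[Proposition 5.5.4]{ref:Bert-09} because $\{y : Wy\ge h\}$ is compact and the integrand is bilinear. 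The inner supremum is $0$ if $Q\tr y = -z$ and $+\infty$ otherwise, yielding a conjugate whose finiteness is governed by a linear feasibility condition in $y$. Substituting into \eqref{eq:thm-dual:2} (and absorbing the minimization over $y$ into the outer program with the sign-flip $y \leftarrow -y$ implicit in the substitution of $z$) produces the explicit program~\eqref{objective-uncertainty}.

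For assertion~(ii), the strategy is to exhibit $\ell(\xi)$ as a pointwise maximum of finitely many affine functions and then invoke Corollary~\ref{cor:affine}~(i) directly. Strong linear programming duality applied to the parametric LP defining $\ell(\xi)$ gives
\[
\ell(\xi) = \sup_{\theta \ge 0}\bigl\{ \inner{\theta}{H\xi + h} : W\tr\theta = q \bigr\},
\]
which is valid because the dual feasible set is nonempty by assumption; compactness of this set guarantees that the supremum is attained at one of the vertices $v_1,\ldots,v_K$. Therefore $\ell(\xi) = \max_{k\le K}\{\inner{H\tr v_k}{\xi} + \inner{v_k}{h}\}$, which is exactly the pointwise maximum of affine functions with coefficients $a_k = H\tr v_k$ and $b_k = \inner{v_k}{h}$ covered by Corollary~\ref{cor:affine}~(i). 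The announced reformulation~\eqref{rhs-uncertainty} then follows immediately.

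The main subtlety is the minimax interchange in assertion~(i): one must be careful that compactness of $\{y: Wy \ge h\}$ is really what permits swapping $\sup_\xi$ and $\inf_y$ over an unbounded (polyhedral) $\Xi$, since the inner $\xi$-supremum is generally $+\infty$. The interchange is nevertheless legitimate because we are only computing a conjugate and the value $+\infty$ is admissible; alternatively, the same result can be obtained by LP duality applied to the finite-dimensional conjugate problem. The remaining work is largely bookkeeping: identifying which terms in the template play the role of $[-\ell_k]^*$ and $\sigma_\Xi$, and verifying that no additional regularity (properness, lower semicontinuity) is needed beyond what the assumptions already supply.
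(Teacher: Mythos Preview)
Your proposal is correct and follows essentially the same approach as the paper: for (i) you compute $[-\ell]^*$ via the minimax theorem using compactness of $\{y:Wy\ge h\}$ and substitute into~\eqref{eq:thm-dual:2}, and for (ii) you dualize the parametric LP, reduce to a vertex maximum, and invoke Corollary~\ref{cor:affine}~(i). The only stray remark is the parenthetical about a ``sign-flip $y\leftarrow -y$'': no such flip is needed, since setting $z_i-\nu_i=-Q\tr y_i$ and $\nu_i=C\tr\gamma_i$ in~\eqref{eq:thm-dual:2} directly yields the constraints of~\eqref{objective-uncertainty} with $y_i$ unchanged.
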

	
	As expected, in the {\em ambiguity-free limit}, problem~\eqref{objective-uncertainty} reduces to a standard SAA problem. Indeed, for $\eps=0$, the variable $\lambda$ can be made large at no penalty, and thus $\gamma_i=0$ and $s_i=\inner{y_i}{Q\data_i}$ at optimality. In this case, problem~\eqref{objective-uncertainty} is equivalent to
		\begin{align*}
			\inf\limits_{y_i} \left\{ {1 \over N}\sum\limits_{i = 1}^{N}  \inner{y_i}{Q\data_i} :  Wy_i\ge h \quad\forall i \le N\right\}.
		\end{align*}
Similarly, one can verify that for $\eps=0$, \eqref{rhs-uncertainty} reduces to the SAA problem
		\begin{align*}
			\inf\limits_{y_i} \left\{ {1 \over N}\sum\limits_{i = 1}^{N}  \inner{y_i}{q} :  Wy_i\ge H\data_i \quad\forall i \le N\right\}.
		\end{align*}

We close this section with a remark on the computational complexity of all the {convex optimization problems derived} in this section.

\begin{Rem}[Computational tractability]
\label{rem:comp}
\hfil
\begin{itemize}
	\item If the Wasserstein metric is defined in terms of the $1$-norm (i.e., $\|\xi\|=\sum_{k=1}^m|\xi_k|$) or the $\infty$-norm (i.e., $\|\xi\|=\max_{k\le m}|\xi_k|$), then the optimization problems \eqref{affine-max}, \eqref{affine-min}, \eqref{chance-worst}, \eqref{chance-best}, \eqref{objective-uncertainty} and \eqref{rhs-uncertainty} all reduce to linear programs whose sizes scale with the number $N$ of data points and the number $J$ of affine pieces of the underlying loss functions. 
	
	\item Except for the two-stage stochastic program with right-hand side uncertainty in \eqref{rhs-uncertainty}, the resulting linear programs scale polynomially in the problem description and are therefore computationally tractable. As the number of vertices $v_k$, $k\le K$, of the polytope $\{\theta\ge 0:W\tr\theta=q\}$ may be exponential in the number of its facets, however, the linear program \eqref{rhs-uncertainty} has generically exponential size. 
	
	\item {Inspecting~\eqref{affine-max}, one easily verifies that the distributionally robust optimization problem~\eqref{DRO} reduces to a finite convex program if $\X$ is convex and $h(x,\xi)= \max_{k\le K} \inner{a_{k}(x)}{\xi} + b_{k}(x)$, while the gradients $a_{k}(x)$ and the intercepts $b_{k}(x)$ depend linearly on $x$. Similarly, \eqref{DRO} can be reformulated as a finite convex program if $\X$ is convex and $h(x,\xi)=\inf_{y} \left\{ \inner{y}{Q\xi} : Wy\ge h(x) \right\}$ or $h(x,\xi)=\inf_{y} \left\{ \inner{q}{y} : Wy \ge H(x)\xi + h(x) \right\}$, while the right hand side coefficients $h(x)$ and $H(x)$ depend linearly on $x$; see \eqref{objective-uncertainty} and \eqref{rhs-uncertainty}, respectively. In contrast, problems \eqref{affine-min}, \eqref{chance-worst} and \eqref{chance-best} result in non-convex optimization problems when their data depends on $x$.}
	
	\item We emphasize that the computational complexity of all convex programs examined in this section is independent of the radius $\eps$ of the Wasserstein ball.
\end{itemize}
\end{Rem}

%===============================================================================
	%\subsection{Extension to Stochastic Processes with a Separable Cost}
	\section{Tractable Extensions}
	\label{sec:exten}
%===============================================================================
	We now demonstrate that through minor modifications of the proofs, Theorems~\ref{thm:dist-rob-opt} and \ref{thm:dual-dual} extend to worst-case expectation problems involving even richer classes of loss functions. First, we investigate problems where the uncertainty can be viewed as a stochastic process and where the loss function is additively separable. Next, we study problems whose loss functions are convex in the uncertain variables {and are therefore} not necessarily representable as finite maxima of concave functions as postulated by Assumption~\ref{a:ell}.  

%-------------------------------------------------------------------------------
	\subsection{Stochastic Processes with a Separable Cost}
%-------------------------------------------------------------------------------
	Consider a variant of the worst-case expectation problem~\eqref{dist-rob-Ex}, where the uncertain parameters can be interpreted as a stochastic process {$\xi = \big(\xi_1,\ldots,\xi_T\big)$}, and assume that $\xi_t \in \Xi_t$, where $ \Xi_t  \subseteq \R^m$ is non-empty and closed for any $t\le T$. Moreover, assume that the loss function is additively separable with respect to the temporal structure of $\xi$, that is, 
	\begin{align}
	\label{proc_cost}
		\ell(\xi) \Let \sum\limits_{t = 1}^{T} \max_{k\le K}\ell_{tk} \big(\xi_t\big), 
	\end{align}
where $\ell_{tk}:\R^m\ra \Ru$ is a measurable function for any $k\le K$ and $t\le T$. Such loss functions appear, for instance, in open-loop stochastic optimal control or in multi-item newsvendor problems. Consider a process norm $\normT{\xi} = \sum_{t = 1}^{T} \|\xi_t\|$ associated with the base norm $\|\cdot\|$ on $\R^m$, and assume that its induced metric is the one used in the definition of the Wasserstein distance. Note that if $\|\cdot\|$ is the 1-norm on $\R^m$, then $\normT{\cdot}$ reduces to the 1-norm on $\R^{mT}$. 

By interchanging summation and maximization, the loss function~\eqref{proc_cost} can be re-expressed as
	\begin{align*}
		\ell(\xi)=  \max_{k_t \le K} \sum\limits_{t = 1}^{T} \ell_{tk_t} \big(\xi_t \big), 
	\end{align*}
where the maximum runs over all $K^T$ combinations of $k_1,\ldots, k_T\le K$. Under this representation, Theorem~\ref{thm:dist-rob-opt} remains applicable. However, the resulting convex optimization problem would involve $\mathcal O(K^T)$ decision variables and constraints, indicating that an efficient solution may not be available. Fortunately, this deficiency can be overcome by modifying Theorem~\ref{thm:dist-rob-opt}.
	
	\begin{Thm}[Convex reduction for separable loss functions]
		\label{thm:dist-rob-opt-separable}
		Assume that the loss function $\ell$ is of the form \eqref{proc_cost}, and the Wasserstein ball is defined through the process norm $\normT{\cdot}$. Then, for any $\eps \ge0 $, the worst-case expectation~\eqref{dist-rob-Ex} is smaller or equal to the optimal value of the  finite convex program
		\begin{align}
		\label{eq:thm-dual_process:1}
		\left\{
			\begin{array}{cllll} \inf\limits_{\lambda, s_{ti}, z_{tik}, \nu_{tik}} & \lambda \eps + {1 \over N}\sum\limits_{i = 1}^{N} \sum\limits_{t = 1}^{T} s_{ti} && \\
					\st & [-\ell_{tk}]^*\big(z_{tik} - \nu_{tik}\big) + \sigma_{\Xi_t}(\nu_{tik}) - \inner{z_{tik}}{\data_{ti}} \le s_{ti} & \forall i \le N, & \forall k\le K, & \forall t \le T,\\
					& \|z_{tik}\|_* \le \lambda &\forall i \le N, & \forall k\le K, & \forall t \le T.
					\end{array}\right.
		\end{align} 
If $\Xi_t$ and $\{\ell_{tk}\}_{k\le K}$ satisfy the convexity Assumption~\ref{a:ell} for every $t\le T$, then the worst-case expectation~\eqref{dist-rob-Ex} coincides exactly with the optimal value of problem~\eqref{eq:thm-dual_process:1}.
	\end{Thm}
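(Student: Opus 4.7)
My plan is to mirror the proof of Theorem~\ref{thm:dist-rob-opt}, but to exploit both the additive separability of $\ell$ and the separability of the process norm $\normT{\cdot}$ so as to avoid the combinatorial blowup that would otherwise arise from expanding $\ell$ as a pointwise maximum of $K^T$ concave functions. Throughout, the implicit total support is the product $\Xi = \Xi_1\times \cdots \times \Xi_T$, so that $\xi\in\Xi$ iff $\xi_t\in\Xi_t$ for every $t$.

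First I would rewrite \eqref{dist-rob-Ex} via the law of total probability as a generalized moment problem in conditional distributions $\Q_i\in\M(\Xi)$ of $\xi$ given $\xi'=\data_i$, and then dualize the Wasserstein constraint with multiplier $\lambda\ge 0$, exactly as in \eqref{eq:pf:thm-dual}. The max-min inequality together with the fact that $\M(\Xi)$ contains all Dirac measures supported on $\Xi$ yields the upper bound
\begin{align*}
\sup\limits_{\Q \in \ball{\Pem}{\eps}} \EE^\Q \big[ \ell(\xi) \big]  \;\le\; \inf\limits_{\lambda \ge 0} \lambda \eps + {1 \over N}\sum\limits_{i = 1}^{N} \sup_{\xi \in \Xi} \Big(\ell(\xi) - \lambda \normT{\xi - \data_i} \Big).
\end{align*}

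The crucial separability step comes next: because $\Xi=\prod_t \Xi_t$, $\ell(\xi)=\sum_{t=1}^T \max_{k\le K}\ell_{tk}(\xi_t)$, and $\normT{\xi-\data_i}=\sum_{t=1}^T \|\xi_t-\data_{ti}\|$, the inner supremum splits as
\begin{align*}
\sup_{\xi \in \Xi} \Big(\ell(\xi) - \lambda \normT{\xi - \data_i} \Big) \;=\; \sum_{t=1}^{T}\sup_{\xi_t\in \Xi_t}\Big(\max_{k\le K}\ell_{tk}(\xi_t) - \lambda\|\xi_t-\data_{ti}\|\Big).
\end{align*}
I would then introduce one epigraphical variable $s_{ti}$ per time-sample pair (rather than per sample), decompose the maximum over $k$, and rewrite $\lambda\|\xi_t-\data_{ti}\|=\max_{\|z_{tik}\|_*\le\lambda}\inner{z_{tik}}{\xi_t-\data_{ti}}$. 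Swapping this inner maximum with the supremum over $\xi_t$ (a restriction, in general) and passing to the conjugate of $-\ell_{tk}+\chi_{\Xi_t}$ via the inf-convolution identity $[-\ell_{tk}+\chi_{\Xi_t}]^* = \cl\!\big[\inf_{\nu_{tik}}[-\ell_{tk}]^*(z_{tik}-\nu_{tik})+\sigma_{\Xi_t}(\nu_{tik})\big]$ gives exactly \eqref{eq:thm-dual_process:1}, proving the inequality part.

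For the equality claim under Assumption~\ref{a:ell}, I would reuse the two tight-duality arguments from the proof of Theorem~\ref{thm:dist-rob-opt}: the dualization of the moment constraint becomes an equality by Shapiro's strong duality theorem for moment problems applied $t$-wise (the argument is unaffected by the separable structure, as the same generalized moment problem governs each sample $i$), and the swap between the supremum over $\xi_t$ and the dual norm representation becomes an equality by the classical minimax theorem on the compact set $\{z_{tik}:\|z_{tik}\|_*\le\lambda\}$. The closure operator in the epi-addition identity becomes redundant by exactly the argument at the end of the proof of Theorem~\ref{thm:dist-rob-opt}, provided each $-\ell_{tk}+\chi_{\Xi_t}$ is proper, convex and lower semicontinuous, which is guaranteed by imposing Assumption~\ref{a:ell} on $(\Xi_t,\{\ell_{tk}\}_{k\le K})$ for every $t$.

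The main obstacle, and the only place where real care is needed beyond mechanically repeating the earlier proof, is verifying the supremum decomposition across $t$: it relies on three facts working together — that the support $\Xi$ is a Cartesian product, that the norm used in the Wasserstein metric is the separable process norm $\normT{\cdot}$, and that the loss is additively separable — so that the coupling between the time blocks through a common multiplier $\lambda$ is the only remaining link. Once this decomposition is in place, the per-block treatment is a direct transcription of the argument for Theorem~\ref{thm:dist-rob-opt}, and the size of the resulting convex program scales as $\mathcal O(NTK)$ rather than $\mathcal O(NK^T)$.
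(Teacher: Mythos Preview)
Your proposal is correct and follows essentially the same route as the paper: dualize the Wasserstein constraint to reach the expression $\inf_{\lambda\ge 0}\lambda\eps+\tfrac1N\sum_i\sup_{\xi\in\Xi}(\ell(\xi)-\lambda\normT{\xi-\data_i})$, exploit the joint separability of $\Xi$, $\ell$, and $\normT{\cdot}$ to split the inner supremum across $t$, then apply the epigraph/dual-norm/minimax/conjugate machinery of Theorem~\ref{thm:dist-rob-opt} per block $(t,i,k)$. One minor imprecision: the strong duality for the moment problem is invoked once on the original (unseparated) problem over $\Xi$, not ``$t$-wise''; the per-$t$ structure only enters after the supremum has been decomposed.
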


	\begin{proof}
	Up until equation~\eqref{eq:2}, the proof of Theorem~\ref{thm:dist-rob-opt-separable} parallels that of Theorem~\ref{thm:dist-rob-opt}. Starting from \eqref{eq:2}, we then have
	\begin{align}
		\sup\limits_{\Q \in \ball{\Pem}{\eps}} 
			& \EE^\Q \big[ \ell(\xi) \big]  = \inf\limits_{\lambda \ge 0}~ \lambda \eps + {1 \over N}\sum\limits_{i = 1}^{N} \sup_{\xi} \left (\ell(\xi) - \lambda \normT{\xi - \data_i} \right ) \notag \\
			& = \inf\limits_{\lambda \ge 0} ~ \lambda \eps + {1 \over N} \sum\limits_{i = 1}^{N} \sum\limits_{t = 1}^{T} \sup_{\xi_t \in \Xi_t} \left (\max_{k\le K} \ell_{tk}\big(\xi_t \big) - \lambda \big \|\xi_t - \data_{ti}\big\| \right ) ,\notag 
			\end{align}
where the interchange of the summation and the maximization is facilitated by the separability of the overall loss function. Introducing epigraphical auxiliary variables yields	
			\begin{align}
			&\left\{ 
			\begin{array}{cllll} \inf\limits_{\lambda, s_{ti}} & \lambda \eps + {1 \over N}\sum\limits_{i = 1}^{N} \sum\limits_{t=1}^{T} s_{ti} \\
			\st & \sup\limits_{\xi_t\in \Xi_t} \Big(\ell_{tk}\big(\xi_t\big) - \lambda \big\|\xi_t - \data_{ti} \big\| \Big)\le s_{ti} & \forall i\le N, ~ \forall k\le K, ~ \forall t \le T \\ 							& \lambda \ge 0 
			\end{array}
			\right. \notag \\
			 \le& \left\{ 
			\begin{array}{cllll} \inf\limits_{\lambda, s_{ti},z_{tik}} & \lambda \eps + {1 \over N}\sum\limits_{i = 1}^{N} \sum\limits_{t = 1}^{T}s_{ti} \\
			\st & \sup\limits_{\xi_t\in \Xi_t} \Big(\ell_{tk}\big(\xi_t \big) - \inner{z_{tik}}{\xi_t} \Big) + \inner{z_{tik}}{\data_{ti}}\le s_{ti} &\forall i\le N, ~\forall k\le K, ~\forall t \le T \\
						& \|z_{tik}\|_* \le \lambda & \forall i\le N,~ \forall k\le K, ~ \forall t \le T
			\end{array}
			\right. \notag \\
			=  & \left\{ 
			\begin{array}{cllll} \inf\limits_{\lambda, s_{ti},z_{tik}} & \lambda \eps + {1 \over N}\sum\limits_{i = 1}^{N} \sum\limits_{t = 1}^{T} s_{ti} \\
			\st & [-\ell_{tk} + \indI{\Xi_t}]^*\big(-z_{tik}\big) + \inner{z_{tik}}{\data_{ti}}\le s_{ti}  &\forall i\le N,~ \forall k\le K,~  \forall t \le T \\
			& \|z_{tik}\|_* \le \lambda & \forall i\le N, ~ \forall k\le K, ~ \forall t \le T,
			\end{array}
			\right. \notag 
	\end{align} 
	where the inequality is justified in a similar manner as the one in \eqref{eq:3}, and it holds as an equality provided that $\Xi_t$ and $\{\ell_{tk}\}_{k\le K}$ satisfy Assumption~\ref{a:ell} for all $t \le T$. {Finally, by \cite[Theorem~11.23(a), p.~493]{ref:Rockafellar-10}, the conjugate of $-\ell_{tk} + \indI{\Xi_t}$ can be replaced by the inf-convolution of the conjugates of $-\ell_{tk}$ and $\indI{\Xi_t}$. This completes the proof.}
	\end{proof}

	Note that the convex program~\eqref{eq:thm-dual_process:1} involves only $\Oo(NKT)$ decision variables and constraints. Moreover, if $\ell_{tk}$ is affine for every $t\le T$ and $k\le K$, while $\|\cdot\|$ represents the $1$-norm or the $\infty$-norm on $\R^m$, then \eqref{eq:thm-dual_process:1} reduces to a tractable linear program (see also Remark~\ref{rem:comp}). A natural generalization of Theorem \ref{thm:dual-dual} further allows us to characterize the extremal distributions of the worst-case expectation problem~\eqref{dist-rob-Ex} with a separable loss function of the form \eqref{proc_cost}.

	\begin{Thm}[Worst-case distributions for separable loss functions]
		\label{prop:process-dual-dual-separable}
		Assume that the loss function $\ell$ is of the form \eqref{proc_cost}, and the Wasserstein ball is defined through the process norm $\normT{\cdot}$. If $\Xi_t$ and $\{\ell_{tk}\}_{k\le K}$ satisfy Assumption~\ref{a:ell} for all $t \le T$, then the worst-case expectation~\eqref{dist-rob-Ex} coincides with the optimal value of the finite convex program
		\begin{align}
		\label{dual-dual-separable}
		\left\{
			 \begin{array}{clll} \Sup{\alpha_{tik}, q_{tik}} & {1 \over N} \sum\limits_{i = 1}^{N} \sum\limits_{k = 1}^{K} \sum\limits_{t=1}^{T} \alpha_{tik}\ell_{tk}\Big( \data_{ti} - {q_{tik} \over \alpha_{tik}}\Big) \\
			\st & {1 \over N}\sum\limits_{i =1}^{N} \sum\limits_{k =1}^{K} \sum\limits_{t=1}^{T} \|q_{tik}\| \le \eps\\
						& \sum\limits_{k = 1}^{K} \alpha_{tik} = 1 &\forall i \le N, \quad \forall t \le T\\
						& \alpha_{tik} \ge 0 &\forall i \le N, \quad \forall t \le T, \quad \forall k\le K \\
						& \data_{ti} - {q_{tik} \over \alpha_{tik}} \in \Xi_t  &\forall i \le N, \quad \forall t \le T, \quad \forall k\le K
			\end{array} \right.
		\end{align}
		irrespective of $\eps \ge0 $. Let $\big\{\alpha_{tik}(r), q_{tik}(r)\big\}_{r \in \N}$ be a sequence {of} feasible decisions whose objective values converge to the supremum of \eqref{dual-dual-separable}. Then, the discrete (product) probability distributions 
		\begin{align*}
			\Q_r \Let  {1 \over N} \sum_{i = 1}^{N} \bigotimes_{t=1}^T \Big(\sum_{k = 1}^{K} \alpha_{tik}(r) \dir{\xi_{tik}(r)}\Big) \quad\mbox{with}\quad \xi_{tik}(r) \Let \data_{ti} - {q_{tik}(r) \over \alpha_{tik}(r)}
		\end{align*}
		belong to the Wasserstein ball $\ball{\Pem}{\eps}$ and attain the supremum of \eqref{dist-rob-Ex} asymptotically, i.e., 
		\begin{align*}
			\sup\limits_{\Q \in \ball{\Pem}{\eps}} \EE^\Q \big[ \ell(\xi) \big] =  \lim\limits_{r \ra \infty} \EE^{\Q_r} \big[ \ell(\xi) \big] = 
			\lim\limits_{r \ra \infty} {1 \over N} \sum\limits_{i = 1}^{N} \sum\limits_{k=1}^{K} \sum\limits_{t=1}^{T} \alpha_{tik}(r) \ell_{tk}\big(\xi_{tik}(r)\big) .
		\end{align*} 
	\end{Thm}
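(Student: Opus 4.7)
The plan is to follow the blueprint of Theorem~\ref{thm:dual-dual} while carefully exploiting the additive separability of both the loss function and the process norm $\normT{\cdot}$.

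First, I would invoke Theorem~\ref{thm:dist-rob-opt-separable} (which applies under Assumption~\ref{a:ell} applied to each $t$) to identify the worst-case expectation \eqref{dist-rob-Ex} with the optimal value of the finite convex program \eqref{eq:thm-dual_process:1}. I would then form the Lagrangian dual of \eqref{eq:thm-dual_process:1}, introducing multipliers $\alpha_{tik}\ge 0$ for the first constraint family and $\beta_{tik}\ge 0$ for the second. Carrying out the unconstrained inner minimizations over $\lambda$ and $s_{ti}$ produces the coupling constraints $\sum_{t,i,k}\beta_{tik}=\eps$ and $\sum_k \alpha_{tik}={1/N}$ for every $t\le T$, $i\le N$. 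Strong duality holds exactly as in the proof of Theorem~\ref{thm:dual-dual}, because under Assumption~\ref{a:ell} each function $[-\ell_{tk}+\indI{\Xi_t}]^*$ is proper, convex and lower semicontinuous, and it appears in a constraint whose right-hand side is a free decision variable.

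Next, I would use the definition of the dual norm to rewrite $\beta_{tik}\|z_{tik}\|_*$ as $\max_{\|q_{tik}\|\le\beta_{tik}}\inner{z_{tik}}{q_{tik}}$ and swap the inner $\inf_{z_{tik}}$ with the outer $\sup_{q_{tik}}$ via the classical minimax theorem~\cite[Proposition~5.5.4]{ref:Bert-09}, which applies because for any finite $\eps$ the $q_{tik}$ range over a non-empty compact set. After this interchange, the $\beta_{tik}$ variables can be eliminated against the constraint $\sum\beta_{tik}=\eps$, leaving the inner problem $\inf_{z_{tik}}\sum_{t,i,k}\bigl(\inner{z_{tik}}{q_{tik}-\alpha_{tik}\data_{ti}}+\alpha_{tik}[-\ell_{tk}+\indI{\Xi_t}]^*(z_{tik})\bigr)$. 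Lemma~\ref{lem:perspective} then identifies this inner value with the perspective expression $\alpha_{tik}\ell_{tk}(\data_{ti}-q_{tik}/\alpha_{tik})$, subject to the hard support constraint $\data_{ti}-q_{tik}/\alpha_{tik}\in\Xi_t$ and the extended-arithmetic conventions for the case $\alpha_{tik}=0$. A final rescaling $\{\alpha_{tik},q_{tik}\}\mapsto\tfrac{1}{N}\{\alpha_{tik},q_{tik}\}$ recovers \eqref{dual-dual-separable}.

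The main obstacle is the second claim, since the natural candidate distribution $\Q_r$ is now a mixture of \emph{product} distributions over the time axis rather than a flat discrete distribution. To show $\Q_r\in\ball{\Pem}{\eps}$, for each $i\le N$ I would build an explicit transportation plan $\Pi_r^i \Let \bigotimes_{t=1}^T \sum_{k=1}^K \alpha_{tik}(r)\,\dir{(\xi_{tik}(r),\data_{ti})}$ that couples the $i$-th product factor of $\Q_r$ with $\dir{\data_i}$; the separability of the process norm then gives
\begin{align*}
\Wass{\Q_r}{\Pem}\le \tfrac{1}{N}\sum_{i=1}^N \int \normT{\xi-\xi'}\,\Pi_r^i(\diff\xi,\diff\xi') = \tfrac{1}{N}\sum_{i=1}^N\sum_{t=1}^T\sum_{k=1}^K \alpha_{tik}(r)\,\bigl\|\xi_{tik}(r)-\data_{ti}\bigr\|,
\end{align*}
which equals $\tfrac{1}{N}\sum_{i,t,k}\|q_{tik}(r)\|\le\eps$ by feasibility. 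Finally, by the product structure of $\Q_r$ and additive separability of $\ell$, the expected loss decomposes marginally: $\EE^{\Q_r}[\ell(\xi)]=\tfrac{1}{N}\sum_{i,t,k}\alpha_{tik}(r)\ell_{tk}(\xi_{tik}(r))$, and the usual sandwich argument (using $\ell(\xi)\ge\sum_t \ell_{tk_t}(\xi_t)$ pointwise together with the equivalence of \eqref{dist-rob-Ex} and \eqref{dual-dual-separable} established in the first part) yields asymptotic attainment of the supremum. The delicate point here is recognizing that the marginal (not joint) structure of the transport cost is what permits the product construction, so that each time slice can be transported independently without violating the Wasserstein budget.
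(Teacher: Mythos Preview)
Your proposal is correct and mirrors the paper's approach exactly: the paper's proof simply states that the claim follows by dualizing~\eqref{eq:thm-dual_process:1} as in the proof of Theorem~\ref{thm:dual-dual}, and you have carried out precisely those details, including the product-coupling construction for the transportation plan. One small slip worth fixing: the claimed equality $\EE^{\Q_r}[\ell(\xi)]=\tfrac{1}{N}\sum_{i,t,k}\alpha_{tik}(r)\,\ell_{tk}(\xi_{tik}(r))$ is actually only an inequality $\ge$, since the marginal decomposition yields $\max_{k'}\ell_{tk'}(\xi_{tik}(r))$ rather than $\ell_{tk}(\xi_{tik}(r))$ inside the sum---but your subsequent sandwich argument is unaffected and closes the gap in the limit.
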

	\begin{proof} As in the proof of Theorem~\ref{thm:dual-dual}, the claim follows by dualizing the convex program~\eqref{eq:thm-dual_process:1}. Details are omitted for brevity of exposition. \end{proof}

We emphasize that the distributions $\Q_r$ from Theorem~\ref{prop:process-dual-dual-separable} can be constructed efficiently by solving a convex program of polynomial size even though they have $NK^T$ discretization points.

%-------------------------------------------------------------------------------
\subsection{Convex Loss Functions}
%-------------------------------------------------------------------------------
 
	Consider now another variant of the worst-case expectation problem~\eqref{dist-rob-Ex}, where the loss function $\ell$ is proper, convex and lower semicontinuous. Unless $\ell$ is piecewise affine, we cannot represent such a loss function as a pointwise maximum of finitely many concave functions, and thus Theorem~\ref{thm:dist-rob-opt} may only provide a loose upper bound on the worst-case expectation~\eqref{dist-rob-Ex}. The following theorem provides an alternative upper bound that admits new insights into distributionally robust optimization with Wasserstein balls and becomes exact for $\Xi=\R^m$.

	\begin{Thm}[Convex reduction for convex loss functions]
	\label{thm:convex}
		Assume that the loss function $\ell$ is proper, convex, and lower semicontinuous, and define $\kappa \Let \sup\big\{ \|\theta\|_* : \ell^*(\theta) < \infty \big \}$. Then, for any $\eps \ge0 $, the worst-case expectation~\eqref{dist-rob-Ex} is smaller or equal to 
		\begin{align}
			\label{kappa_bound}
			\kappa \eps + {1 \over N}\sum_{i = 1}^{N} \ell(\data_i).
		\end{align}
		If $\Xi=\R^m$, then the worst-case expectation~\eqref{dist-rob-Ex} coincides exactly with \eqref{kappa_bound}.
	\end{Thm}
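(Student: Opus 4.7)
The plan is to piggy-back on the duality argument already executed in the proof of Theorem~\ref{thm:dist-rob-opt}. In particular, the chain of equalities and inequalities up to~\eqref{eq:1-} only uses convexity/lower-semicontinuity of the objective (not the decomposition into a finite maximum), so it still delivers
\[
\sup_{\Q\in\ball{\Pem}{\eps}} \EE^\Q[\ell(\xi)] \;\le\; \inf_{\lambda\ge 0}\left\{\lambda\eps + \frac{1}{N}\sum_{i=1}^N \sup_{\xi\in\Xi}\bigl(\ell(\xi)-\lambda\|\xi-\data_i\|\bigr)\right\},
\]
and this bound becomes an equality when $\Xi=\R^m$ by the Shapiro strong-duality theorem for moment problems cited in the proof of Theorem~\ref{thm:dist-rob-opt}.

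For the upper bound~\eqref{kappa_bound} I would evaluate the inner supremum by conjugate duality. Writing $\ell(\xi)=\sup_\theta\langle\theta,\xi\rangle-\ell^*(\theta)$ (biconjugacy, valid since $\ell$ is proper, convex and lsc) and interchanging the two suprema yields
\[
\sup_{\xi\in\R^m}\bigl(\ell(\xi)-\lambda\|\xi-\data_i\|\bigr) = \sup_{\|\theta\|_*\le\lambda}\bigl(\langle\theta,\data_i\rangle-\ell^*(\theta)\bigr),
\]
because $\sup_{\eta}\langle\theta,\eta\rangle-\lambda\|\eta\|$ equals $0$ when $\|\theta\|_*\le\lambda$ and $+\infty$ otherwise. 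Choosing $\lambda=\kappa$ (trivial if $\kappa=\infty$), the constraint $\|\theta\|_*\le\kappa$ no longer cuts out any $\theta$ with $\ell^*(\theta)<\infty$, so the right-hand side collapses to $\ell^{**}(\data_i)=\ell(\data_i)$. Since $\Xi\subseteq\R^m$, the same inequality holds with sup over $\Xi$, which together with $\lambda=\kappa$ in the dual gives~\eqref{kappa_bound}.

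For the tightness claim when $\Xi=\R^m$ I would construct a near-optimal perturbation of $\Pem$ by hand. Assuming $\kappa>0$ and $\ell(\data_i)<\infty$ for all $i$ (otherwise the bound is trivially attained), pick, for a given $\delta>0$, a dual point $\theta$ with $\ell^*(\theta)<\infty$ and $\|\theta\|_*\ge\kappa-\delta$, and a direction $v$ with $\|v\|=1$ and $\langle\theta,v\rangle\ge\|\theta\|_*-\delta$. For $\alpha\in(0,1)$ consider the two-atoms-per-sample distribution
\[
\Q_\alpha := \frac{1}{N}\sum_{i=1}^N\Bigl[(1-\alpha)\dir{\data_i} + \alpha\,\dir{\data_i+(\eps/\alpha)v}\Bigr],
\]
whose transportation plan (leaving mass $(1-\alpha)/N$ at $\data_i$ and moving $\alpha/N$ to $\data_i+(\eps/\alpha)v$) certifies $\Wass{\Pem}{\Q_\alpha}\le\eps$, hence $\Q_\alpha\in\ball{\Pem}{\eps}$. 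The Fenchel inequality $\ell(\data_i+(\eps/\alpha)v)\ge\langle\theta,\data_i\rangle+(\eps/\alpha)\langle\theta,v\rangle-\ell^*(\theta)$ then yields
\[
\EE^{\Q_\alpha}[\ell(\xi)] \;\ge\; \frac{1-\alpha}{N}\sum_{i=1}^N\ell(\data_i) + \frac{\alpha}{N}\sum_{i=1}^N\langle\theta,\data_i\rangle + \eps\,\langle\theta,v\rangle - \alpha\,\ell^*(\theta),
\]
and letting first $\alpha\downarrow 0$ and then $\delta\downarrow 0$ (with $\|\theta\|_*\uparrow\kappa$) produces the matching lower bound.

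The main obstacle is the tightness half. The upper bound is essentially a conjugate-duality calculation plus the observation that $\kappa$ is precisely the Lipschitz constant of $\ell$ (so the bound could alternatively be read off from Theorem~\ref{thm:KantorovichRubinstein}). The delicate point in the tightness argument is that a worst-case distribution need not exist in the Wasserstein ball—compare Example~\ref{ex:worst-case}—so one must work with an asymptotically optimal sequence $\Q_\alpha$ whose second atom escapes to infinity at rate $\eps/\alpha$, balancing the shrinking mass $\alpha$ against the growing displacement to harvest exactly $\kappa\eps$ in the limit.
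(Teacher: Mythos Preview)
Your upper-bound argument is essentially the paper's: both start from the weak-duality bound~\eqref{eq:1-} and evaluate the inner supremum via biconjugacy, arriving at $\ell(\data_i)$ when $\lambda\ge\kappa$ and $+\infty$ when $\lambda<\kappa$. (Your displayed identity $\sup_{\xi\in\R^m}\bigl(\ell(\xi)-\lambda\|\xi-\data_i\|\bigr)=\sup_{\|\theta\|_*\le\lambda}\bigl(\langle\theta,\data_i\rangle-\ell^*(\theta)\bigr)$ is only correct in the first regime---for $\lambda<\kappa$ the left side is $+\infty$ while the right side need not be---but you only use it at $\lambda=\kappa$, so the argument survives.) The genuine difference is in the tightness proof for $\Xi=\R^m$. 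The paper stays entirely on the dual side: it asserts that~\eqref{eq:1-} holds as an \emph{equality} for every $\eps>0$ by the Shapiro moment-duality theorem, then computes the dual value exactly via a minimax interchange and the identity $\sigma_{\R^m}=\chi_{\{0\}}$, so that the infimum over $\lambda$ is attained at $\lambda=\kappa$ with value $\kappa\eps+\frac{1}{N}\sum_i\ell(\data_i)$. You instead give a direct primal construction: the two-atoms-per-sample distributions $\Q_\alpha$ lie in $\ball{\Pem}{\eps}$ and, via the Fenchel inequality, their expected loss tends to the bound as $\alpha\downarrow 0$ and $\delta\downarrow 0$. Your route is more elementary---it does not need strong duality for the lower bound---and it produces explicit near-extremal distributions in the spirit of Theorem~\ref{thm:dual-dual}; the paper's route is shorter once strong duality is granted and handles the edge cases ($\kappa=\infty$, $\eps=0$) uniformly without the separate casework you flag.
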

	
	\begin{Rem}[Radius of effective domain]
	\label{rem:kappa}
	The parameter $\kappa$ can be viewed as the radius of the smallest ball containing the effective domain of the conjugate function $\ell^*$ in terms of the dual norm. By the standard conventions of extended arithmetic, the term $\kappa\eps$ in \eqref{kappa_bound} is interpreted as $0$ if $\kappa=\infty$ and $\eps=0$.
	\end{Rem}
	\begin{proof}
Equation~\eqref{eq:1-} in the proof of Theorem \ref{thm:dist-rob-opt} implies that
		\begin{align}
			\label{convex_loss}
		\sup\limits_{\Q \in \ball{\Pem}{\eps}} \EE^\Q \big[ \ell(\xi) \big] = \inf\limits_{\lambda \ge 0} ~\lambda \eps + {1 \over N}\sum\limits_{i = 1}^{N} \sup_{\xi \in \Xi} \left (\ell(\xi) - \lambda \|\xi - \data_i\| \right )
		\end{align}
		for every $\eps > 0$. As $\ell$ is proper, convex, and lower semicontinuous, it coincides with its bi-conjugate function $\ell^{**}$, see {\em e.g.}\ \cite[Proposition 1.6.1(c)]{ref:Bert-09}. Thus, we may write
	\begin{align*}
		\ell(\xi) = \sup_{\theta \in \Theta} \inner{\theta}{\xi} - \ell^*(\theta),
	\end{align*}
	where $\Theta \Let \{\theta\in\R^m : \ell^*(\theta) < \infty\}$ denotes the effective domain of the conjugate function $\ell^*$. Using this dual representation of $\ell$ in conjunction with the definition of the dual norm, we find
	\begin{align*}
		\Sup{\xi\in \Xi} \Big(\ell(\xi) - \lambda \|\xi-\data_i\|\Big) &= \Sup{\xi\in \Xi}~ \Sup{\theta\in \Theta} \Big(\inner{\theta}{\xi} - \ell^*(\theta) - \lambda \|\xi-\data_i\|\Big) \\
		& = \Sup{\xi\in \Xi}~ \Sup{\theta\in \Theta} \Inf{\|z\|_* \le \lambda}\Big(\inner{\theta}{\xi} - \ell^*(\theta) + \inner{z}{\xi} - \inner{z}{\data_i}\Big). 
	\end{align*}
	The classical minimax theorem \cite[Proposition 5.5.4]{ref:Bert-09} then allows us to interchange the maximization over $\xi$ with the maximization over $\theta$ and the minimization over $z$ to obtain
	\begin{align}
		\notag 
		\Sup{\xi\in \Xi} \Big(\ell(\xi) - \lambda \|\xi-\data_i\|\Big) & = \Sup{\theta\in \Theta} \Inf{\|z\|_* \le \lambda}  \Sup{\xi\in \Xi}\Big(\inner{\theta + z}{\xi} - \ell^*(\theta) - \inner{z}{\data_i}\Big) \\
		\label{sigma}
		& = \Sup{\theta\in \Theta} \Inf{\|z\|_* \le \lambda} \sigma_{\Xi}(\theta + z) - \ell^*(\theta) - \inner{z}{\data_i}.
	\end{align}
	Recall that $\sigma_\Xi$ denotes the support function of $\Xi$. It seems that there is no simple exact reformulation of \eqref{sigma} for arbitrary convex uncertainty sets $\Xi$. Interchanging the maximization over $\theta$ with the minimization over $z$ in \eqref{sigma} would lead to the conservative upper bound {of Corollary \ref{cor:approx}}. Here, however, we employ an alternative approximation. By definition of the support function, we have $\sigma_\Xi\leq \sigma_{\R^m} = \indI{\{0\}}$. Replacing $\sigma_\Xi$ with $ \indI{\{0\}}$ in \eqref{sigma} thus results in the conservative approximation
	\begin{align}
	\label{eq:ell-convex}
		\Sup{\xi\in \R^m} \Big(\ell(\xi) - \lambda \|\xi-\data_i\|\Big) &\leq 
		\left \{
		\begin{array}{cl}
			\ell(\data_i) &  \mbox{if }\sup \big\{\|\theta\|_* : \theta \in \Theta \big \} \le \lambda, \\
			\infty & \mbox{otherwise.}
		\end{array}
		\right.
	\end{align}
The inequality \eqref{kappa_bound} then follows readily by substituting \eqref{eq:ell-convex} into \eqref{convex_loss} and using the definition of $\kappa$ in the theorem statement. For $\Xi=\R^m$ we have $\sigma_\Xi= \indI{\{0\}}$, and thus the upper bound \eqref{kappa_bound} becomes exact. Finally, if $\eps=0$, then \eqref{dist-rob-Ex} trivially coincides with \eqref{kappa_bound} under our conventions of extended arithmetic. Thus, the claim follows.
	\end{proof}

	Theorem~\ref{thm:convex} asserts that for $\Xi=\R^m$, the worst-case expectation \eqref{dist-rob-Ex} of a convex loss function reduces the sample average of the loss adjusted by the simple correction term $\kappa\eps$. The following proposition highlights that $\kappa$ can be interpreted as a measure of maximum steepness of the loss function. This interpretation has intuitive appeal in view of Definition~\ref{def:wass}.

	\begin{Prop}[Steepness of the loss function]
	\label{prop:kappa}
		Let $\kappa$ be defined as in Theorem~\ref{thm:convex}. 
		\begin{enumerate}[label=(\roman*), itemsep = 1mm, topsep = 1mm] 
			\item \label{itm:lem:upper} 
			If $\ell$ is $\ol{L}$-Lipschitz continuous, i.e., if there exists $\xi' \in \R^m$ such that $\ell(\xi) - \ell(\xi') \le \ol{L}\|\xi-\xi'\|$ for all $\xi \in \R^m$, then~$\kappa \le \ol{L}$. 
			\item \label{itm:lem:lower}
			If $\ell$ majorizes an affine function, i.e., if there exists $\theta\in \R^m$ with $\|\theta\|_*=:\ul L$ and $\xi' \in \R^m$ such that $\ell(\xi) - \ell(\xi') \ge \inner{\theta}{\xi-\xi'}$ for all $\xi \in \R^m$, then $\kappa \ge \ul{L} $.
		\end{enumerate}
	\end{Prop}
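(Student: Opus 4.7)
The plan is to prove both assertions directly from the definition $\kappa = \sup\{\|\theta\|_* : \ell^*(\theta) < \infty\}$ by estimating $\ell^*(\theta) = \sup_{\xi \in \R^m} \inner{\theta}{\xi} - \ell(\xi)$ using the respective assumption on $\ell$.

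For assertion \ref{itm:lem:upper}, I would argue the contrapositive: if $\|\theta\|_* > \bar L$, then $\ell^*(\theta) = \infty$, which forces $\kappa \le \bar L$. By the definition of the dual norm, there exists a direction $v \in \R^m$ with $\inner{\theta}{v} > \bar L \|v\|$. Substituting $\xi = \xi' + t v$ for $t > 0$ into the Lipschitz estimate $\ell(\xi) \le \ell(\xi') + \bar L \|\xi - \xi'\|$ yields
\[
\ell^*(\theta) \ge \inner{\theta}{\xi' + t v} - \ell(\xi') - \bar L t \|v\| = \inner{\theta}{\xi'} - \ell(\xi') + t\big(\inner{\theta}{v} - \bar L \|v\|\big),
\]
and the right-hand side tends to $+\infty$ as $t \ra \infty$. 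Hence $\ell^*(\theta) = \infty$, as required.

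For assertion \ref{itm:lem:lower}, I would upper bound $\ell^*(\theta)$ using the affine minorization $\ell(\xi) \ge \ell(\xi') + \inner{\theta}{\xi - \xi'}$:
\[
\ell^*(\theta) = \sup_{\xi \in \R^m} \inner{\theta}{\xi} - \ell(\xi) \le \sup_{\xi \in \R^m} \inner{\theta}{\xi} - \ell(\xi') - \inner{\theta}{\xi - \xi'} = \inner{\theta}{\xi'} - \ell(\xi') < \infty.
\]
Therefore $\theta$ is a feasible point in the supremum defining $\kappa$, which gives $\kappa \ge \|\theta\|_* = \ul L$.

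Neither part seems to present a real obstacle; the argument is a direct application of the definitions of the conjugate function and the dual norm. The only subtlety worth noting explicitly is the use of the standard dual-norm characterization $\|\theta\|_* = \sup_{v \neq 0} \inner{\theta}{v}/\|v\|$ in part \ref{itm:lem:upper}, which guarantees the existence of the required direction $v$.
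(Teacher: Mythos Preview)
Your proof is correct and follows essentially the same route as the paper: both parts rest directly on the definition of $\ell^*$ and the dual norm. Part~\ref{itm:lem:lower} is identical to the paper's argument. For part~\ref{itm:lem:upper} there is a minor stylistic difference: the paper computes $\sup_{\xi}\big(\inner{\theta}{\xi}-\ol L\|\xi-\xi'\|\big)$ by rewriting the norm via its dual representation and invoking the minimax theorem, whereas you bypass this by picking a single direction $v$ with $\inner{\theta}{v}>\ol L\|v\|$ and sending $t\to\infty$ along the ray $\xi'+tv$. Your argument is slightly more elementary and avoids the appeal to minimax, but both establish the same fact that $\ell^*(\theta)=\infty$ whenever $\|\theta\|_*>\ol L$.
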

	
	\begin{proof}
		The proof follows directly from the definition of conjugacy. As for \ref{itm:lem:upper}, we have 
		\begin{align*}
			\ell^*(\theta) = \sup_{\xi \in \R^m} \inner{\theta}{\xi} - \ell(\xi)~& \ge \sup_{\xi \in \R^m} \inner{\theta}{\xi} - \ol{L} \|\xi -\xi'\| -\ell(\xi')\\
			& = \sup_{\xi \in \R^m} \inf_{\|z\|_*\le \ol{L}} \inner{\theta}{\xi} - \inner{z}{\xi -\xi'}- \ell(\xi'),
		\end{align*}
		where the last equality follows from the definition of the dual norm. Applying the minimax theorem \cite[Proposition 5.5.4]{ref:Bert-09} and explicitly carrying out the maximization over $\xi$ yields
		\begin{align*}
			\ell^*(\theta) \ge 
			\left\{ 
			\begin{array}{cl}
				\inner{\theta}{\xi'}-\ell(\xi') & \mbox{if } \|\theta\|_* \le \ol{L}, \\
				\infty  & \mbox{otherwise.}
			\end{array}\right.
		\end{align*}
		Consequently, $\ell^*(\theta)$ is infinite for all $\theta$ with $\|\theta\|_*> \ol L$, which readily implies that the $\|\cdot\|_*$-ball of radius $\ol L$ contains the effective domain of $\ell^*$. Thus, $\kappa \le \ol{L}$. 
		
		As for \ref{itm:lem:lower}, we have 
		\begin{align*}
		\ell^*(\theta) = \sup_{\xi \in \R^m} \inner{\theta}{\xi} - \ell(\xi) & \le \sup_{\xi \in \R^m} \inner{\theta}{\xi} - \inner{z}{\xi-\xi'} - \ell(\xi') \\
		& = \sigma_{\R^m}(\theta - z)+ \inner{z}{\xi'} - \ell(\xi'), 
		\end{align*}
		which implies that $\ell^*(\theta) \le \inner{\theta}{\xi'} - \ell(\xi') < \infty$. Thus, $\theta$ belongs to the effective domain of $\ell^*$. We then conclude that $\kappa \ge \|\theta\|_* = \ul{L}$.
	\end{proof}
	
{ \begin{Rem}[Consistent formulations]
If $\Xi=\R^m$ and the loss function is given by $\ell(\xi) = \max_{k \le K}\{\inner{a_{k}}{\xi} + b_{k}\}$, then both Corollary~\ref{cor:affine} and Theorem~\ref{thm:convex} offer an exact reformulation of the worst-case expectation~\eqref{dist-rob-Ex} in terms of a finite-dimensional convex program. On the one hand, Corollary~\ref{cor:affine} implies that \eqref{dist-rob-Ex} is equivalent~to
\begin{align*}
\left\{
\begin{array}{clll} \Min{\lambda} & \lambda \eps + {1 \over N}\sum\limits_{i = 1}^{N} \ell(\data_i)\\
\st & \|a_k\|_* \le \lambda & \forall k \le K,
\end{array} \right.
\end{align*}
which is obtained by setting $C=0$ and $d=0$ in~\eqref{affine-max}. At optimality we have $\lambda^\star=\max_{k\le K} \|a_k\|_*$, which corresponds to the (best) Lipschitz constant of $\ell(\xi)$ with respect to the norm~$\|\cdot\|$. On the other hand, Theorem~\ref{thm:convex} implies that \eqref{dist-rob-Ex} is equivalent to \eqref{kappa_bound} with $\kappa=\lambda^\star$. Thus, Corollary~\ref{cor:affine} and Theorem~\ref{thm:convex} are consistent.
\end{Rem}}

{ \begin{Rem}[$\eps$-insensitive optimizers\footnote{{We are indepted to Vishal Gupta who has brought this interesting observation to our attention.}}]
Consider a loss function $h(x,\xi)$ that is convex in $\xi$, and assume that $\Xi=\R^m$. In this case Theorem~\ref{thm:convex} remains valid, but the steepness parameter $\kappa(x)$ may depend on $x$. For loss functions whose Lipschitz modulus with respect to $\xi$ is independent of $x$ (e.g., the newsvendor loss), however, $\kappa(x)$ is constant. In this case the distributionally robust optimization problem~\eqref{DRO} and the SAA problem~\eqref{Ex_emp} share the same minimizers irrespective of the Wasserstein radius~$\eps$. This phenomenon could explain why the SAA solutions tend to display a surprisingly strong out-of-sample performance in these problems.
\end{Rem}}

%===============================================================================
\section{Numerical Results}
\label{sec:num}
%===============================================================================
We validate the theoretical results of this paper in the context of a stylized portfolio selection problem. The subsequent simulation experiments are designed to provide additional insights into the performance guarantees of the proposed distributionally robust optimization scheme.

%-------------------------------------------------------------------------------
\subsection{Mean-Risk Portfolio Optimization}
%-------------------------------------------------------------------------------

Consider a capital market consisting of $m$ assets whose yearly returns are captured by the random vector $\xi = [\xi_1, \ldots, \xi_m]\tr$. If short-selling is forbidden, a portfolio is encoded by a vector of percentage weights $x=[x_1,\ldots,x_m]\tr$ ranging over the probability simplex $\X=\{x\in\mathbb R^m_+: \sum_{i=1}^{m}x_i = 1\}$. As portfolio $x$ invests a percentage $x_i$ of the available capital in asset $i$ for each $i=1,\ldots,m$, its return amounts to $\inner{x}{\xi}$. In the remainder we aim to solve the single-stage stochastic program
\begin{align}
    \J = \inf_{x\in\X} \bigg\{\EE^{\PP}\big[-\inner{x}{\xi}\big] + \rho\, \PP\text{-}\cvar_{\alpha}\big(-\inner{x}{\xi}\big) \bigg\},
    \label{portfolio}
\end{align}
which minimizes a weighted sum of the mean and the conditional value-at-risk (CVaR) of the portfolio loss $-\inner{x}{\xi}$, where $\alpha\in (0,1]$ is referred to as the confidence level of the CVaR, and $\rho\in\R_+$ quantifies the investor's risk-aversion. Intuitively, the CVaR at level $\alpha$ represents the average of the $\alpha\times 100\%$ worst (highest) portfolio losses under the distribution $\PP$. Replacing the CVaR in the above expression with its formal definition \cite{ref:Rock&Yry-00}, we obtain   
\begin{align*}
    \J & = \inf_{x\in\X}\bigg\{\EE^{\PP}\big[-\inner{x}{\xi}\big] + \rho\,\inf_{\tau\in\R} \EE^\PP \Big[ \tau + {1\over \alpha} \max\big\{ -\inner{x}{\xi} - \tau, 0 \big \}\Big] \bigg\} \\
    & =  \inf_{x\in\X, \tau\in\R} \EE^\PP \Big[ \max_{k\le K} \, a_k\inner{ x}{\xi}+b_k \tau \Big],
\end{align*}
where $K=2$, $a_1= -1$, $a_2= -1-\frac{\rho}{\alpha}$, $b_1=\rho$ and $b_2= \rho(1-\frac{1}{\alpha})$. An investor who is unaware of the distribution $\PP$ but has observed a dataset $\Xiem$ of $N$ historical samples from $\PP$ and knows that the support of $\PP$ is contained in $\Xi=\{\xi\in \R^m:C\xi\leq d\}$ might solve the distributionally robust counterpart of \eqref{portfolio} with respect to the Wasserstein ambiguity set $\ball{\Pem}{\eps}$, that is,
\begin{align*}
    {\Jdd(\eps)} \Let  \inf_{x\in\X, \tau\in\R} \sup\limits_{\Q \in \ball{\Pem}{\eps}} \EE^\Q \Big[ \max_{k\le K} \, a_k \inner{ x}{\xi}+b_k \tau \Big],
\end{align*}
{where we make the dependence on the Wasserstein radius $\eps$ explicit.} By Corollary~\ref{cor:affine} we know that
\begin{align}
    \label{dro:portfolio}
    {\Jdd (\eps)} = \left\{
    \begin{array}{lclll} & \inf\limits_{x,\tau,\lambda,s_i, \gamma_{ik}} & \lambda \eps + {1 \over N}\sum\limits_{i = 1}^{N} s_i  \\
    & \st & x\in\X\\
        & & b_k\tau +a_k\inner{x}{\data_i}+ \inner{\gamma_{ik}}{d-C\data_i} \le s_i & \forall i \le N, & \forall k \le K\\
        & & \|C\tr\gamma_{ik} - a_{k}x\|_* \le \lambda & \forall i \le N, & \forall k \le K  \\
        & & \gamma_{ik} \ge 0& \forall i \le N, & \forall k \le K.
    \end{array}
    \right.
\end{align}
%{For the sake of clarity, we explicitly denote the dependence of the above optimal value to the parameter $\eps$. Similarly, the optimizer of the optimization program is also denoted by $\xdro(\eps)$.}
Before proceeding with the numerical analysis of this problem, we provide some analytical insights into its optimal solutions when there is significant ambiguity. In what follows we keep the training data set fixed and let $\xdd(\eps)$ be an optimal distributionally robust portfolio corresponding to the Wasserstein ambiguity set of radius $\eps$. 
We will now show that, for natural choices of the ambiguity set, $\xdd(\eps)$ converges to the equally weighted portfolio $\frac{1}{m}e$ as $\eps$ tends to infinity, where $e \Let (1,\ldots,1)\tr$. The optimality of the equally weighted portfolio under high ambiguity has first been demonstrated in \cite{ref:PfluRichWoz-11} using analytical methods. We identify this result here as an immediate consequence of Theorem~\ref{thm:dist-rob-opt}, which is primarily a computational result.

For any non-empty set $S\subseteq \R^m$ we denote by $\mbox{recc}(S) \Let \{y\in\R^m:x+\lambda y\in S~\forall x\in S, ~\forall \lambda\geq 0\}$ the recession cone and by $S^\circ \Let \{y\in \R^m:\inner{y}{x}\leq 0~\forall x\in S\}$ the polar cone of $S$.

\begin{Lem}
\label{polarrecession}
If $\{\eps_k\}_{k\in\mathbb N}\subset \R_+$ tends to infinity, then any accumulation point $x^\star$ of $\big\{\xdd(\eps_k)\big\}_{k\in\mathbb N}$ is a portfolio that has minimum distance to $(\mbox{\em recc}(\Xi))^\circ$ with respect to $\|\cdot\|_*$. 
\end{Lem}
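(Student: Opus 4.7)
The plan is to exploit the explicit linear-programming reformulation \eqref{dro:portfolio} of $\Jdd(\eps)$ and perform a two-scale asymptotic analysis as $\eps\to\infty$. Since the only term in the objective of \eqref{dro:portfolio} that is proportional to $\eps$ is $\lambda\eps$, the scaled value $\Jdd(\eps)/\eps$ should converge to the minimum achievable $\lambda$, and accumulation points of the minimizers $\xdd(\eps)$ should therefore minimize this limiting quantity over the simplex $\X$. The geometric content of the lemma will then emerge by rewriting this limiting quantity as a distance to a certain cone via Farkas' Lemma.

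More precisely, first I would fix any $x\in\X$ and note that the smallest $\lambda$ admissible by the constraints $\gamma_{ik}\ge 0$ and $\|C\tr\gamma_{ik}-a_k x\|_*\le\lambda$ in \eqref{dro:portfolio} is
\[
\Lambda(x) \Let \max_{k\le K}\,\inf_{\gamma\ge 0}\,\|C\tr\gamma - a_k x\|_*,
\]
since the decision variables $\gamma_{ik}$ decouple across $(i,k)$ and the constraint does not involve $\data_i$. Farkas' Lemma applied to $\mbox{recc}(\Xi)=\{\xi:C\xi\le 0\}$ identifies the cone generated by the rows of $C$ with $(\mbox{recc}(\Xi))^\circ$, that is, $\{C\tr\gamma:\gamma\ge 0\}=(\mbox{recc}(\Xi))^\circ$. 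Hence
\[
\Lambda(x) = \max_{k\le K}\,\mathrm{dist}_{\|\cdot\|_*}\big(a_k x,\,(\mbox{recc}(\Xi))^\circ\big),
\]
and since $(\mbox{recc}(\Xi))^\circ$ is a cone while the scalars $a_k$ are all negative and $|a_2|\ge|a_1|$, a short homogeneity argument reduces $\Lambda(x)$ to a positive multiple of the $\|\cdot\|_*$-distance from $x$ (up to sign reflection) to $(\mbox{recc}(\Xi))^\circ$. In particular, $\arg\min_{x\in\X}\Lambda(x)=\arg\min_{x\in\X}\mathrm{dist}_{\|\cdot\|_*}(x,(\mbox{recc}(\Xi))^\circ)$.

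Finally, I would close the argument by an epi-convergence (or $\Gamma$-convergence) statement: the family of rescaled objectives in \eqref{dro:portfolio} divided by $\eps$ converges pointwise to $x\mapsto\Lambda(x)$ on $\X$, since the remaining term $(N\eps)^{-1}\sum_i s_i$ can be driven to zero by choosing $\tau=0$ and $\gamma_{ik}$ at the argmin of $\|C\tr\gamma-a_kx\|_*$, both of which can be kept uniformly bounded on any bounded subset of $\X$. Standard results on convergence of minimizers under epi-convergence then imply that every accumulation point of $\xdd(\eps_k)$ lies in $\arg\min_{x\in\X}\Lambda(x)$, which is exactly the set of portfolios of minimum distance to $(\mbox{recc}(\Xi))^\circ$.

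The main obstacle I anticipate is the clean verification that $\tau$ and the auxiliary multipliers $\gamma_{ik}$ can be chosen jointly so as to keep $s_i$ bounded uniformly for all $x$ in a neighbourhood of an optimizer — this is what makes the second-order term truly negligible against $\lambda\eps$ and legitimizes the epi-limit. A subsidiary subtlety is the careful sign bookkeeping when pulling the negative coefficients $a_k$ out of the distance $\mathrm{dist}_{\|\cdot\|_*}(a_k x,(\mbox{recc}(\Xi))^\circ)$, which relies on the cone property of $(\mbox{recc}(\Xi))^\circ$ and on $\|\cdot\|_*$ being absolutely homogeneous.
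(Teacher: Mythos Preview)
Your proposal is correct and follows essentially the same route as the paper: both arguments invoke epi-convergence to show that as $\eps\to\infty$ the minimizers of \eqref{dro:portfolio} converge to those of $\min_{x\in\X}\min_{\gamma\ge 0}\max_k\|C\tr\gamma-a_kx\|_*$, then identify the cone $\{C\tr\gamma:\gamma\ge 0\}$ with $(\mbox{recc}(\Xi))^\circ$ via polar duality, and finally use the negativity of the $a_k$ together with the cone's positive homogeneity to strip the scalars. Your explicit rescaling by $\eps$ and discussion of the uniform boundedness of the $s_i$ terms is slightly more detailed than the paper's one-line appeal to ``standard epi-convergence results,'' but the substance is identical.
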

\begin{proof}
Note first that $\xdd(\eps_k)$, $k\in\mathbb N$, and $x^\star$ exist because $\X$ is compact. For large Wasserstein radii $\eps$, the term $\lambda\eps$ dominates the objective function of problem~\eqref{dro:portfolio}. Using standard epi-convergence results \cite[Section~7.E]{ref:Rockafellar-10}, one can thus show that
\begin{align*}
    x^\star\, & \in \arg\min_{x\in\X}~ \min_{\gamma_{ik}\geq 0}~ \max_{i\le N,\, k\le K}  \|C\tr\gamma_{ik} - a_{k}x\|_*\\
    & = \arg\min_{x\in\X}~ \max_{i\le N,\, k\le K}~ \min_{\gamma\geq 0} ~ \|C\tr\gamma + |a_{k}|\,x\|_* \\
    & = \arg\min_{x\in\X} ~ \min_{\gamma\geq 0} ~ \|C\tr\gamma + x\|_* ~ \max_{k\le K} |a_k| \\
    & = \arg\min_{x\in\X} ~ \min_{\gamma\geq 0} ~ \|C\tr\gamma + x\|_*,
\end{align*}
where the first equality follows from the fact that $a_k<0$ for all $k\le K$, the second equality uses the substitution $\gamma\rightarrow \gamma |a_k|$, and the last equality holds because the set of minimizers of an optimization problem is not affected by a positive scaling of the objective function. Thus, $x^\star$ is the portfolio nearest to the cone $\mathcal C=\{C\tr\gamma:\gamma\geq 0\}$. The claim now follows as the polar cone
\begin{align*}
\mathcal C^\circ \Let \{y\in\R^m:y\tr x\leq 0~\forall x\in\mathcal C\}= \{y\in\R^m:y\tr C\tr \gamma \leq 0~\forall \gamma\geq 0\}= \{y\in\R^m: Cy\geq 0\}
\end{align*}
is readily recognized as the recession cone of $\Xi$ and as $\mathcal C=(\mathcal C^\circ)^\circ$.
\end{proof}

\begin{Prop}[Equally weighted portfolio]
\label{prop:1/n}
Assume that the Wasserstein metric is defined in terms of the $p$-norm in the uncertainty space for some $p\in[1,\infty)$. If $\{\eps_k\}_{k\in\mathbb N}\subset \R_+$ tends to infinity, then $\big\{\xdd(\eps_k)\big\}_{k\in\mathbb N}$ converges to the equally weighted portfolio $x^\star=\frac{1}{m}e$ provided that the uncertainty set is given by
\begin{itemize}
\item[(i)] the entire space, i.e., $\Xi=\R^m$, or
\item[(ii)] the nonnegative orthant shifted by $-e$, i.e., $\Xi=\{\xi\in\R^m:\xi \geq-e\}$, which captures the idea that no asset can lose more than $100\%$ of its value.
\end{itemize}
\end{Prop}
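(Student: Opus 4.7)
The plan is to reduce the claim to a purely geometric question via Lemma~\ref{polarrecession}, which tells us that every accumulation point of $\{\xdd(\eps_k)\}_{k\in\N}$ minimizes the distance (in the dual norm $\|\cdot\|_*$) from $\X$ to $(\mbox{recc}(\Xi))^\circ$. Since the Wasserstein metric is built from the $p$-norm for some $p\in[1,\infty)$, the dual norm is the $q$-norm with conjugate exponent $q\in(1,\infty]$. The main task is therefore to compute $(\mbox{recc}(\Xi))^\circ$ in each case, verify that the distance collapses to $\|x\|_q$, and identify the unique minimizer of $\|\cdot\|_q$ over the simplex as the equally weighted portfolio.

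In case~(i), $\mbox{recc}(\R^m)=\R^m$ has polar $\{0\}$, so the distance function is literally $\|x\|_q$. In case~(ii), since $\Xi=-e+\R^m_+$, the recession cone is $\R^m_+$ and its polar is $\R^m_-$. Because every $x\in\X$ is nonnegative, componentwise monotonicity of the $q$-norm on the positive orthant shows that the nearest point in $\R^m_-$ to $x$ is the origin, so the distance is again $\|x\|_q$. Both cases thus reduce to the same scalar optimization problem.

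Finally, I will argue that $\tfrac{1}{m}e$ is the unique minimizer of $\|x\|_q$ on the simplex: for $q\in(1,\infty)$ this is a direct consequence of Jensen's inequality applied to the strictly convex function $t\mapsto t^q$, while for $q=\infty$ (the case $p=1$) it follows from the elementary bound $\max_i x_i\geq \tfrac{1}{m}\sum_i x_i$, which is tight on $\X$ only when all coordinates agree. Uniqueness of the minimizer combined with Lemma~\ref{polarrecession} then forces every accumulation point of $\{\xdd(\eps_k)\}$ to equal $\tfrac{1}{m}e$, and compactness of $\X$ promotes this to convergence of the entire sequence. The only genuinely delicate step is the polar-cone computation in case~(ii) and the verification that projecting a nonnegative vector onto $\R^m_-$ yields the origin; the remainder amounts to recognizing that minimizing a symmetric strictly convex function over the simplex is attained at its barycenter.
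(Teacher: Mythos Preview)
Your proposal is correct and follows exactly the same route as the paper: invoke Lemma~\ref{polarrecession}, compute $(\mbox{recc}(\Xi))^\circ$ as $\{0\}$ in case~(i) and $\R^m_-$ in case~(ii), reduce both to minimizing $\|x\|_q$ over the simplex, and identify the unique minimizer as $\tfrac{1}{m}e$. You actually supply more detail than the paper does---the projection argument in case~(ii), the Jensen/max-average justification for uniqueness, and the compactness step upgrading accumulation-point to full-sequence convergence---all of which the paper leaves implicit.
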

\begin{proof}
(i) One easily verifies from the definitions that $(\mbox{recc}(\Xi))^\circ=\{0\}$. Moreover, we have $\|\cdot\|_*=\|\cdot\|_q$ where $\frac{1}{p}+\frac{1}{q}=1$. As $p\in [1,\infty)$, we conclude that $q\in (1,\infty]$, and thus the unique nearest portfolio to $(\mbox{recc}(\Xi))^\circ$ with respect to $\|\cdot\|_*$ is $x^\star=\frac{1}{m}e$. The claim then follows from Lemma \ref{polarrecession}. Assertion~(ii) follows in a similar manner from the observation that  $(\mbox{recc}(\Xi))^\circ$ is now the non-positive orthant. 
\end{proof}

{With some extra effort one can show that for every $p\in[1,\infty)$ there is a threshold $\bar \eps>0$ with $\xdd(\eps)=x^\star$ for all $\eps\geq \bar \eps$, see~\cite[Proposition~3]{ref:PfluRichWoz-11}. Moreover, for $p\in\{1,2\}$ the threshold $\bar \eps$ is known analytically.}

%-------------------------------------------------------------------------------
\subsection{Simulation Results: Portfolio Optimization}
%-------------------------------------------------------------------------------

\label{sec:simulation}

{Our experiments are based on a market with $m=10$ assets considered in~\cite[Section~7.5]{ref:BertSAA-14}.} In view of the capital asset pricing model we may assume that the return $\xi_i$ is decomposable into a systematic risk factor $\psi\sim \mathcal N(0,2\%)$ common to all assets and an unsystematic or idiosyncratic risk factor $\zeta_i\sim\mathcal N(i\times 3\%, i\times 2.5\%)$ specific to asset $i$. Thus, we set $\xi_i=\psi+\zeta_i$, where $\psi$ and the idiosyncratic risk factors $\zeta_i$, $i=1,\ldots,m$, constitute independent normal random variables. By construction, assets with higher indices promise higher mean returns at a higher risk. Note that the given moments of the risk factors completely determine the distribution $\PP$ of~$\xi$. This distribution has support $\Xi=\R^m$ and satisfies Assumption~\ref{a:exp} for the tail exponent $a=1$, say. We also set $\alpha=20\%$ and $\rho=10$ in all numerical experiments, and we use the $1$-norm to measure distances in the uncertainty space. Thus, $\|\cdot\|_*$ is the $\infty$-norm, whereby \eqref{dro:portfolio} reduces to a linear program.

\begin{figure*} [t]
	\centering
	\subfigure[$N=30$ training samples]{\label{fig:x:1} \includegraphics[width=0.31\columnwidth]{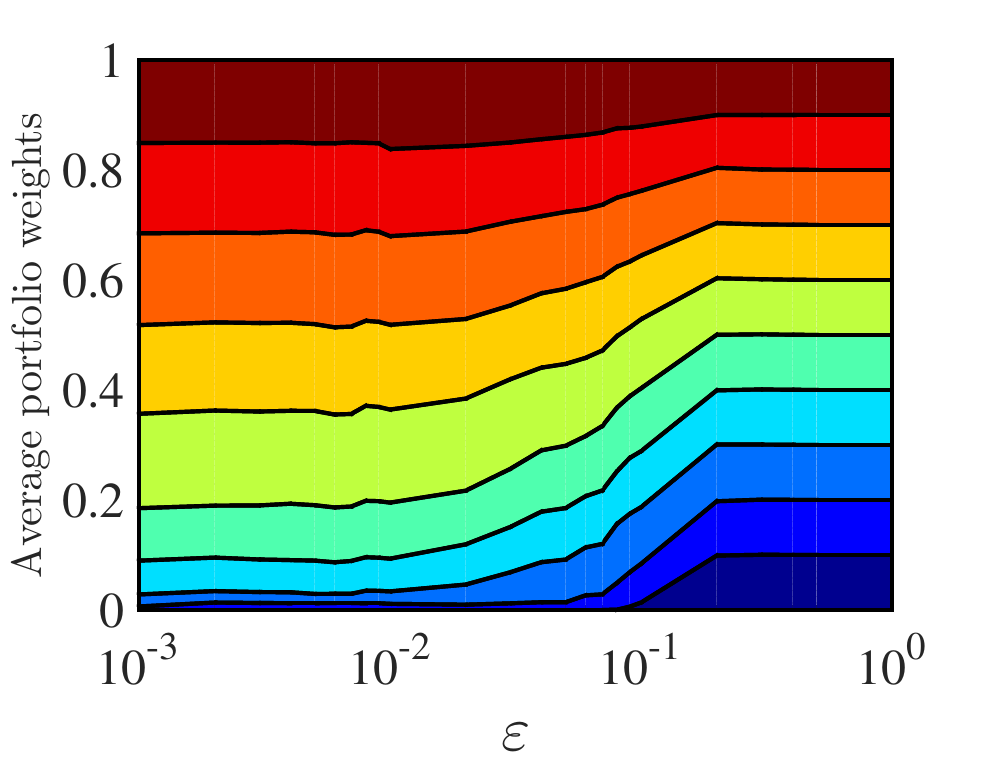}} \hspace{0mm}
	\subfigure[$N=300$ training samples]{\label{fig:x:2} \includegraphics[width=0.31\columnwidth]{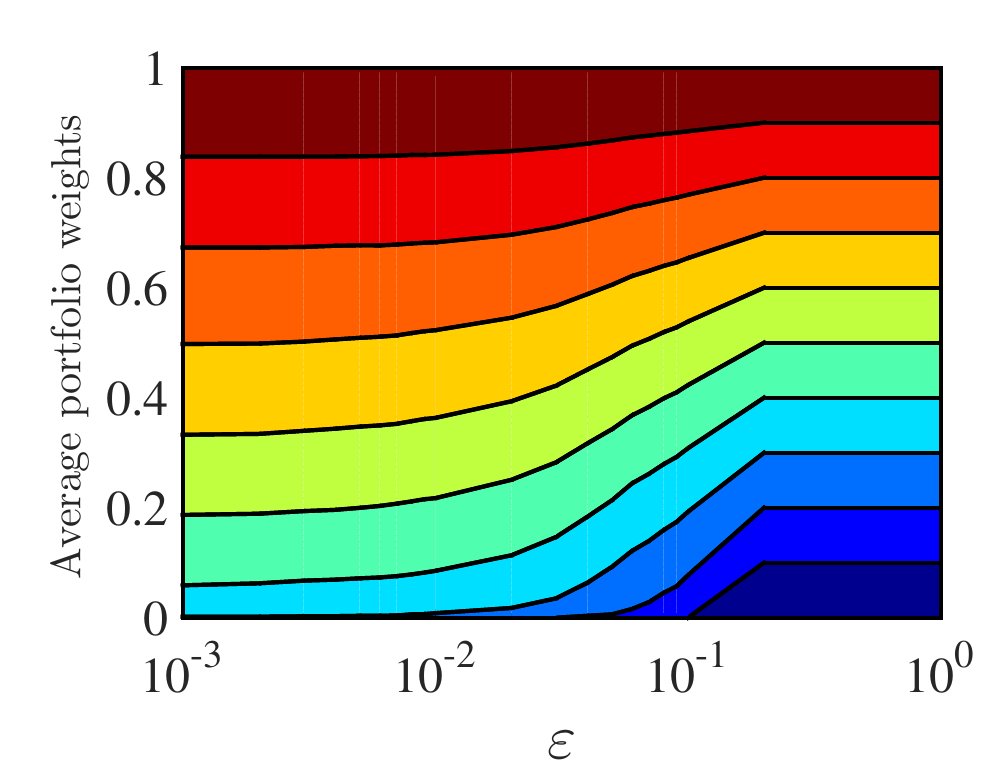}} \hspace{0mm}
	\subfigure[$N=3000$ training samples]{\label{fig:x:3} \includegraphics[width=0.31\columnwidth]{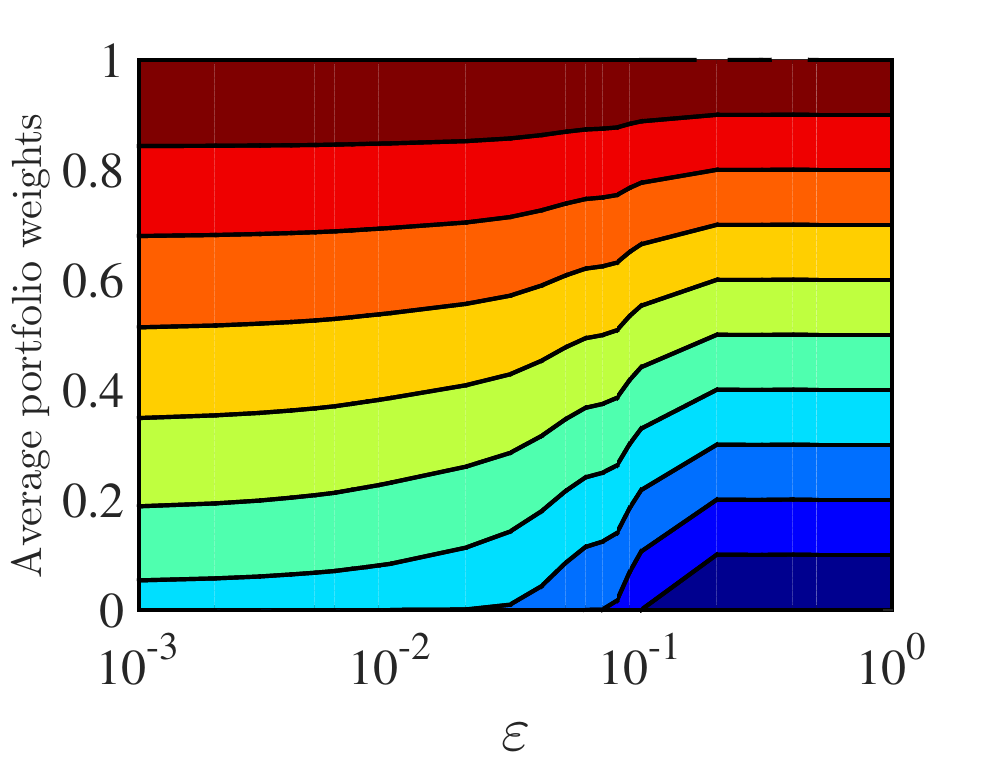}} \hspace{0mm}
	\caption{Optimal portfolio composition as a function of the Wasserstein radius~$\eps$ averaged over 200 simulations; the portfolio weights are depicted in ascending order, {\em i.e.}, the weight of asset~1 at the bottom (dark blue area) and that of asset~10 at the top (dark red area)}
	\label{fig:x}
\end{figure*}

	\subsubsection{\bf Impact of the Wasserstein Radius}
In the first experiment we investigate the impact of the Wasserstein radius $\eps$ on the optimal distributionally robust portfolios and their out-of-sample performance. We solve problem \eqref{dro:portfolio} using training datasets of cardinality $N \in \{30, 300, 3000\}$. Figure~\ref{fig:x} visualizes the corresponding optimal portfolio weights $\xdd(\eps)$ as a function of $\eps$, averaged over $200$ independent simulation runs. Our numerical results confirm the theoretical insight of Proposition~\ref{prop:1/n} that the optimal distributionally robust portfolios converge to the equally weighted portfolio as the Wasserstein radius $\eps$ increases; see also~\cite{ref:PfluRichWoz-11}.

The out-of-sample performance
\begin{align*}
J\big(\xdd(\eps)\big) \Let \EE^{\PP}\big[-\inner{\xdd(\eps)}{\xi}\big] + \rho\, \PP\text{-}\cvar_{\alpha}\big(-\inner{\xdd(\eps)}{\xi}\big)
\end{align*}
of any fixed distributionally robust portfolio $\xdd(\eps)$ can be computed analytically as $\PP$ constitutes a normal distribution by design, see, {\em e.g.}, \cite[p.~29]{ref:Rock&Yry-00}. %Notice that $J\big(\xdro(\eps)\big)$ is basically a random variable depending on the training dataset $\Xiem$ and is different from the distributionally robust optimization optimal value $\Jdro(\eps)$ in \eqref{dro:portfolio}. Let us remind that the latter is interpreted as the certificate of the proposed solution $\xdro(\eps)$. 
Figure~\ref{fig:perf:eps} shows the tubes between the 20\% and 80\% quantiles (shaded areas) and the means (solid lines) of the out-of-sample performance $J\big(\xdd(\eps)\big)$ as a function of $\eps$---estimated using $200$ independent simulation runs. We observe that the out-of-sample performance improves (decreases) up to a critical Wasserstein radius $\eps_{\rm crit}$ and then deteriorates (increases). This stylized fact was observed consistently across all of simulations and provides an empirical justification for adopting a distributionally robust approach.

Figure~\ref{fig:perf:eps} also visualizes the reliability of the performance guarantees offered by our distributionally robust portfolio model. Specifically, the dashed lines represent the empirical probability of the event $J\big(\xdd(\eps)\big) \le \Jdd(\eps)$ with respect to $200$ independent training datasets. We find that the reliability is nondecreasing in $\eps$. This observation has intuitive appeal because $\Jdd(\eps) \ge J(\xdd(\eps))$ whenever $\PP\in \ball{\Pem}{\eps}$, and the latter event becomes increasingly likely as $\eps$ grows. Figure~\ref{fig:perf:eps} also indicates that the certificate guarantee sharply rises towards 1 near the critical Wasserstein radius $\eps_{\rm crit}$. Hence, the out-of-sample performance of the distributionally robust portfolios improves as long as the reliability of the performance guarantee is noticeably smaller than 1 and deteriorates when it saturates at 1. Even though this observation was made consistently across all simulations, we were unable to validate it theoretically.

\begin{figure*}[t]
	\centering
	\subfigure[$N=30$ training samples]{\label{fig:perf:1} \includegraphics[width=0.32\columnwidth]{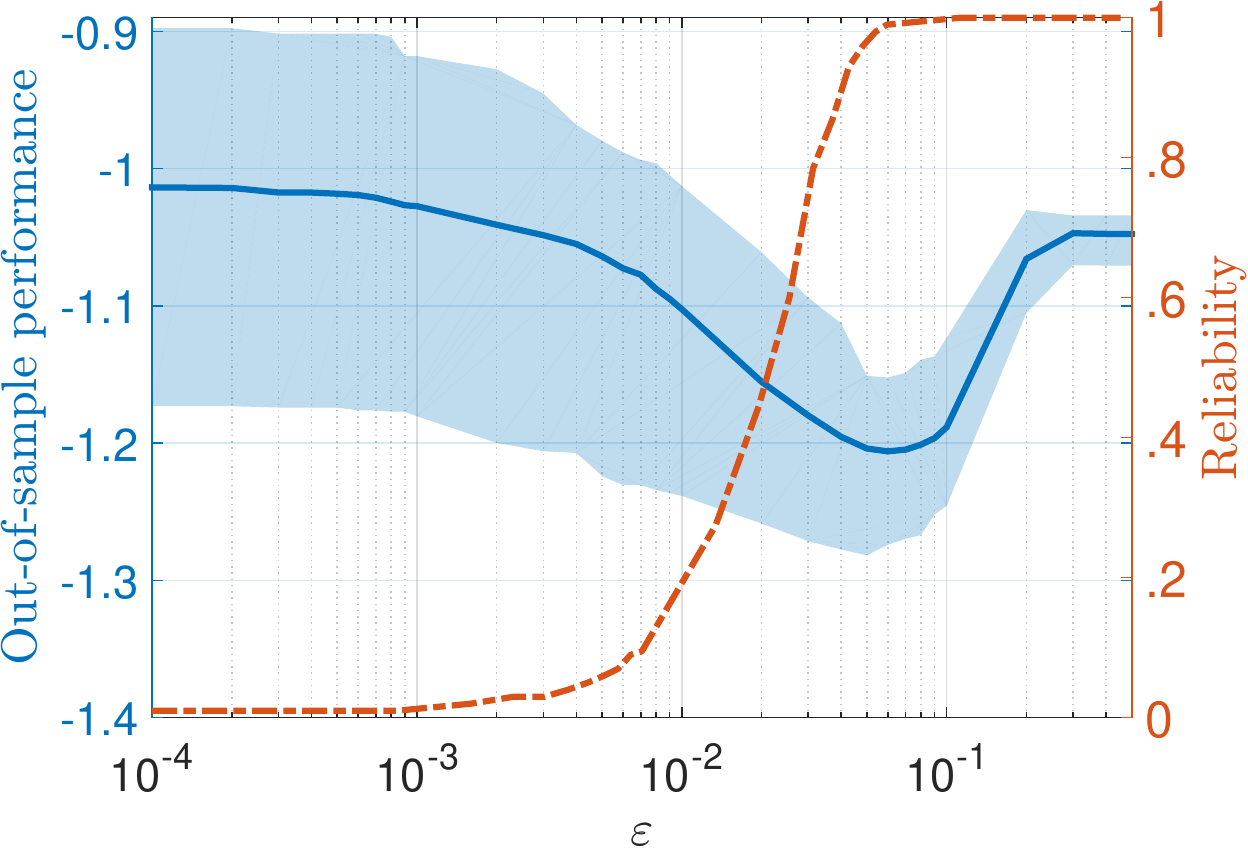}}
	\subfigure[$N=300$ training samples]{\label{fig:perf:2} \includegraphics[width=0.32\columnwidth]{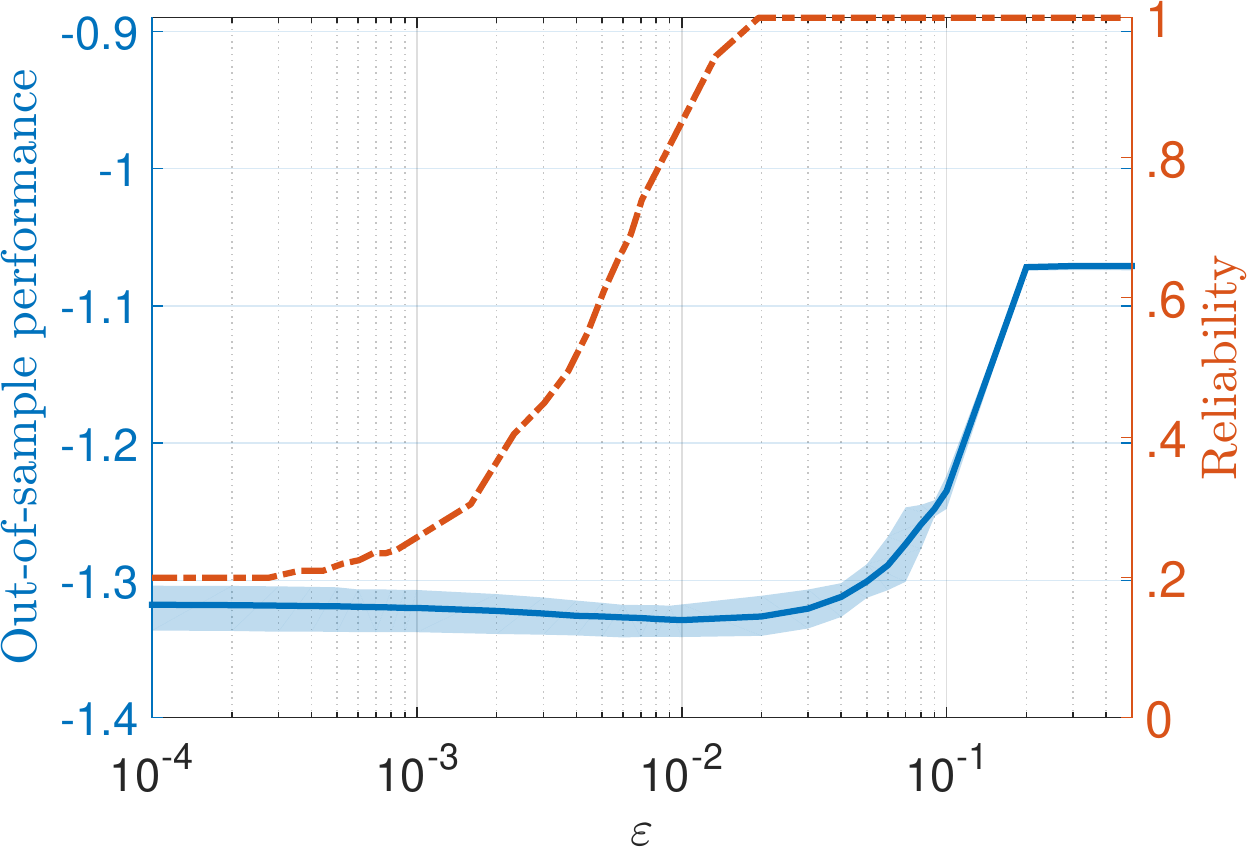}}
	\subfigure[$N=3000$ training samples]{\label{fig:perf:3} \includegraphics[width=0.32\columnwidth]{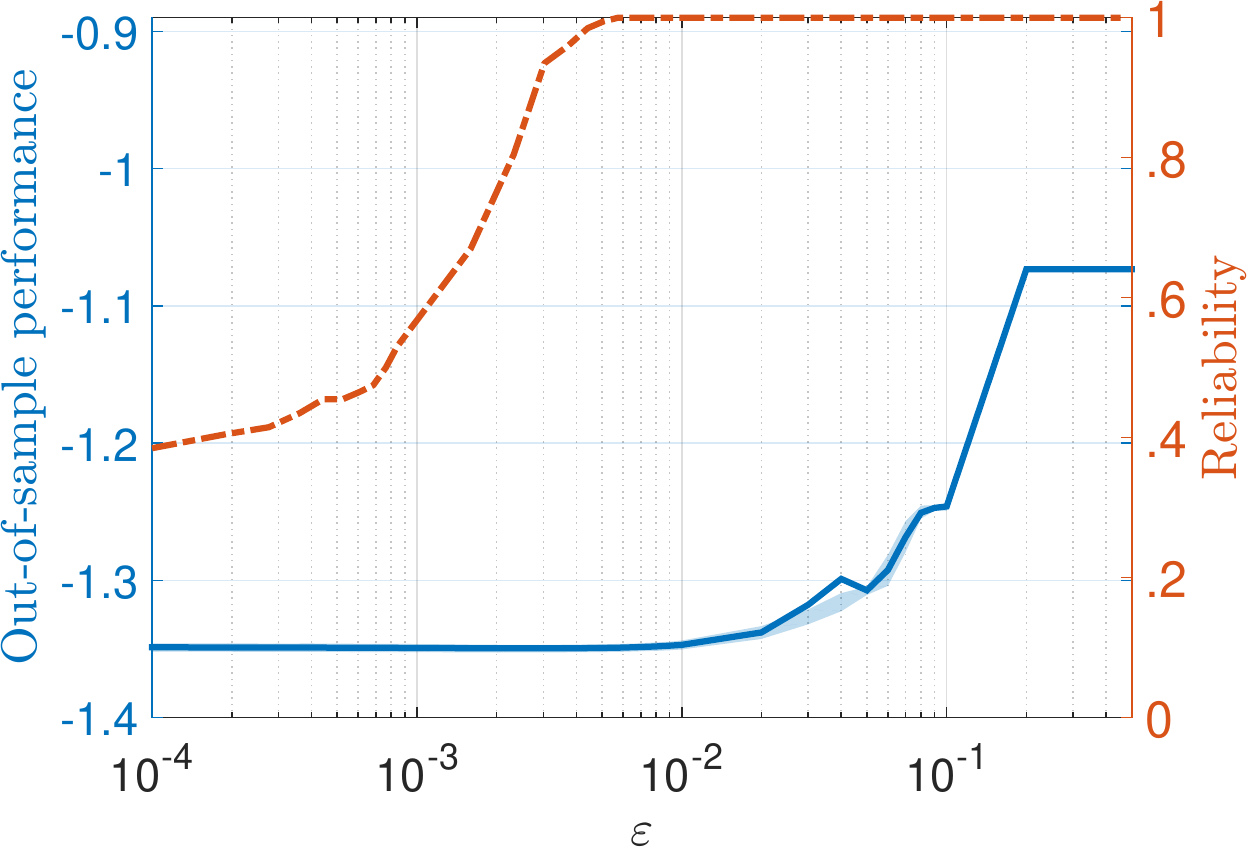}} 
	\caption{Out-of-sample performance $J(\xdd(\eps)) $ (left axis, solid line and shaded area) and reliability $\PP^N[J(\xdd(\eps)) \le \Jdd(\eps)]$ (right axis, dashed line) as a function of the Wasserstein radius $\eps$ and estimated on the basis of 200 simulations}
	\label{fig:perf:eps}
\end{figure*}

\subsubsection{\bf Portfolios Driven by Out-of-Sample Performance}
\label{subsub:sim:N}

Different Wasserstein radii $\eps$ may result in robust portfolios $\xdd(\eps)$ with vastly different out-of-sample performance $J(\xdd(\eps))$. Ideally, one should select the radius $\wh \eps_N^{\rm \; opt}$ that minimizes $J(\xdd(\eps))$ over all $\eps\geq 0$; note that $\wh \eps_N^{\rm \; opt}$ inherits the dependence on the training data from $J(\xdd(\eps))$. As the true distribution $\PP$ is unknown, however, it is impossible to evaluate and minimize $J(\xdd(\eps))$. In practice, the best we can hope for is to approximate $\wh \eps_N^{\rm \; opt}$ using the training data. Statistics offers several methods to accomplish this goal:
\begin{itemize}
	\item {\em Holdout method:} Partition $\data_1,\ldots,\data_N$ into a training dataset of size $N_T$ and a validation dataset of size $N_V=N-N_T$. Using only the training dataset, solve~\eqref{dro:portfolio} for a large but finite number of candidate radii $\eps$ to obtain $\wh x_{N_T}(\eps)$. Use the validation dataset to estimate the out-of-sample performance of $\wh x_{N_T}(\eps)$ via the sample average approximation. Set $\wh \eps_N^{\rm \; hm}$ to any $\eps$ that minimizes this quantity. Report $\xdd^{\rm \; hm}=\wh x_{N_T}(\wh \eps_N^{\rm \; hm})$ as the data-driven solution and $\Jdd^{\rm \; hm}=\wh J_{N_T}(\wh \eps_N^{\rm \; hm})$ as the corresponding certificate. %In all experiments we use $80\%$ of the data for training and $20\%$ for validation.
\item {\em $k$-fold cross validation:} Partition $\data_1,\ldots,\data_N$ into $k$ subsets, and run the holdout method $k$ times. In each run, use exactly one subset as the validation dataset and merge the remaining $k-1$ subsets to a training dataset. Set $\wh \eps_N^{\rm \; cv}$ to the average of the Wasserstein radii obtained from the $k$ holdout runs. Resolve~\eqref{dro:portfolio} with $\eps=\wh \eps_N^{\rm \; cv}$ using all $N$ samples, and report $\xdd^{\rm \; cv}=\xdd(\wh \eps_N^{\rm \; cv})$ as the data-driven solution and $\Jdd^{\rm \; cv}=\Jdd(\wh \eps_N^{\rm \; cv})$ as the corresponding certificate. %In all experiments we set $k=5$.
\end{itemize}
The holdout method is computationally cheaper, but cross validation has superior statistical properties. There are several other methods to estimate the best Wassertein radius $\wh \eps_N^{\rm \; opt}$. By construction, however, no method can provide a radius $\wh \eps_N$ such that $\xdd(\wh \eps_N)$ has a better out-of-sample performance than $\xdd(\wh \eps_N^{\rm \; opt})$.

In all experiments we compare the distributionally robust approach based on the Wasserstein ambiguity set with the classical sample average approximation (SAA) and with a state-of-the-art data-driven distributionally robust approach, where the ambiguity set is defined via a linear-convex ordering (LCX)-based goodness-of-fit test \cite[Section~3.3.2]{ref:BertSAA-14}. The size of the LCX ambiguity set is determined by a single parameter, which should be tuned to optimize the out-of-sample performance. While the best parameter value is unavailable, it can again be estimated using the holdout method or via cross validation. To our best knowledge, the LCX approach represents the only existing data-driven distributionally robust approach for {\em continuous} uncertainty spaces that enjoys strong finite-sample guarantees, asymptotic consistency as well as computational tractability.\footnote{{Much like worst-case expectations over Wasserstein balls, worst-case expectations over LCX ambiguity sets can be reformulated as finite convex programs whenever the underlying loss function represents a pointwise maximum of $K$ concave component functions. Unlike problem~\eqref{eq:thm-dual:2} in Theorem~\ref{thm:dist-rob-opt}, however, the resulting convex program scales exponentially with $K$.}}

To keep the computational burden manageable, in all experiments we select the Wasserstein radius as well as the LCX size parameter from within the discrete set $\mathcal E=\{\eps=b\cdot 10^c:b\in\{0,\ldots,9\},\; c\in\{-3,-2,-1\}\}$ instead of $\mathbb R_+$. We have verified that refining or extending $\mathcal E$ has only a marginal impact on our results, which indicates that $\mathcal E$ provides a sufficiently rich approximation of~$\R_+$.

\begin{figure*} [t]
	\centering
	\subfigure[Holdout method]{\label{fig:perf-val} 
		\includegraphics[width=0.31\columnwidth]{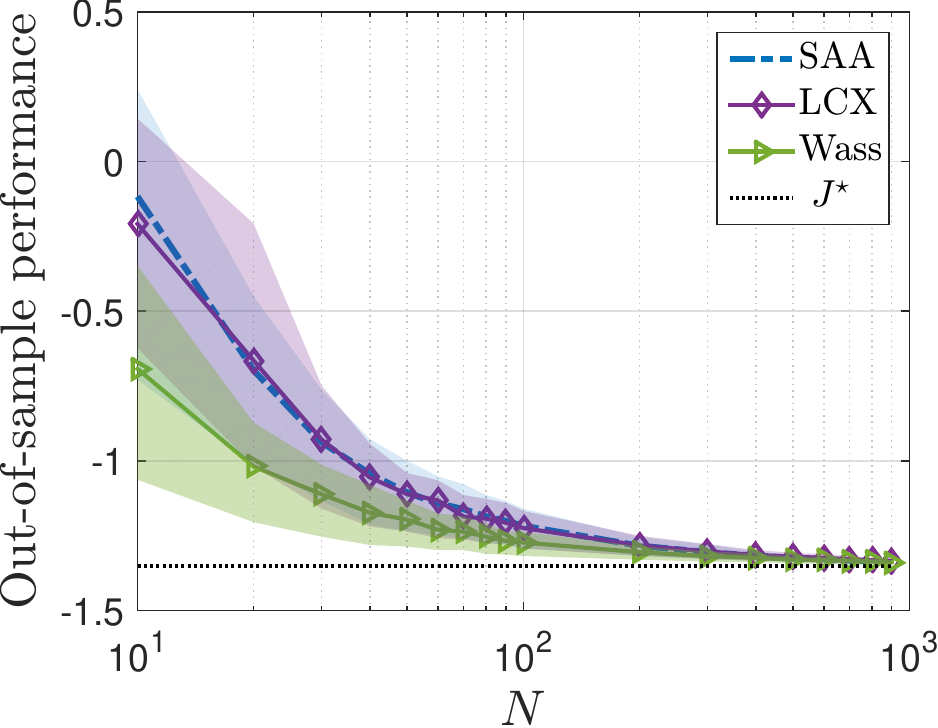}} \hspace{1mm}
	\subfigure[Holdout method]{\label{fig:cert-val}
		\includegraphics[width=0.31\columnwidth]{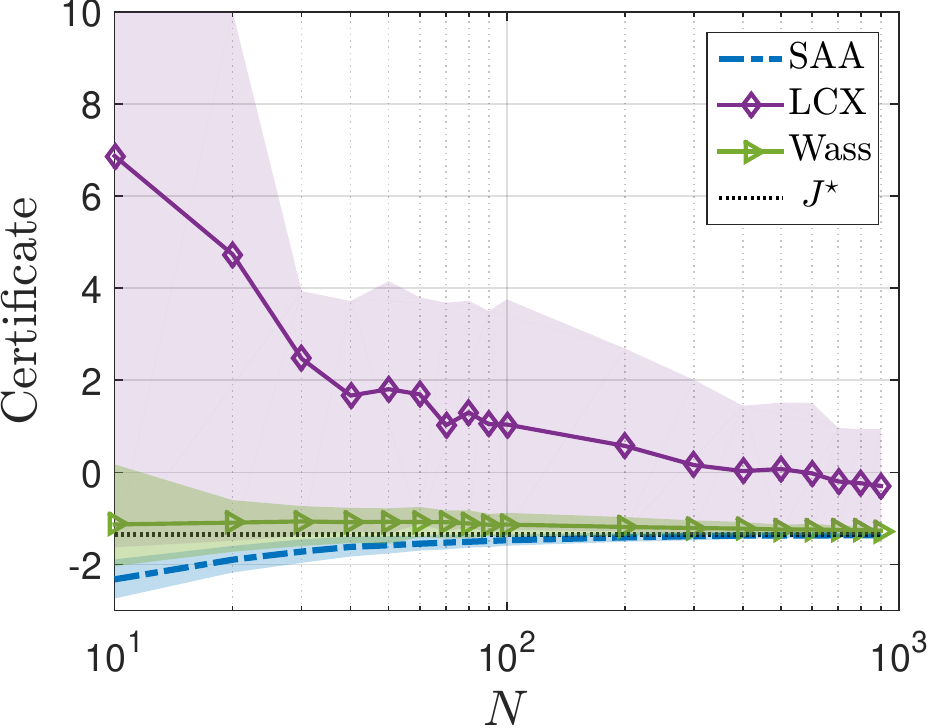}} \hspace{1mm}
	\subfigure[Holdout method]{\label{fig:beta-val}
		\includegraphics[width=0.31\columnwidth]{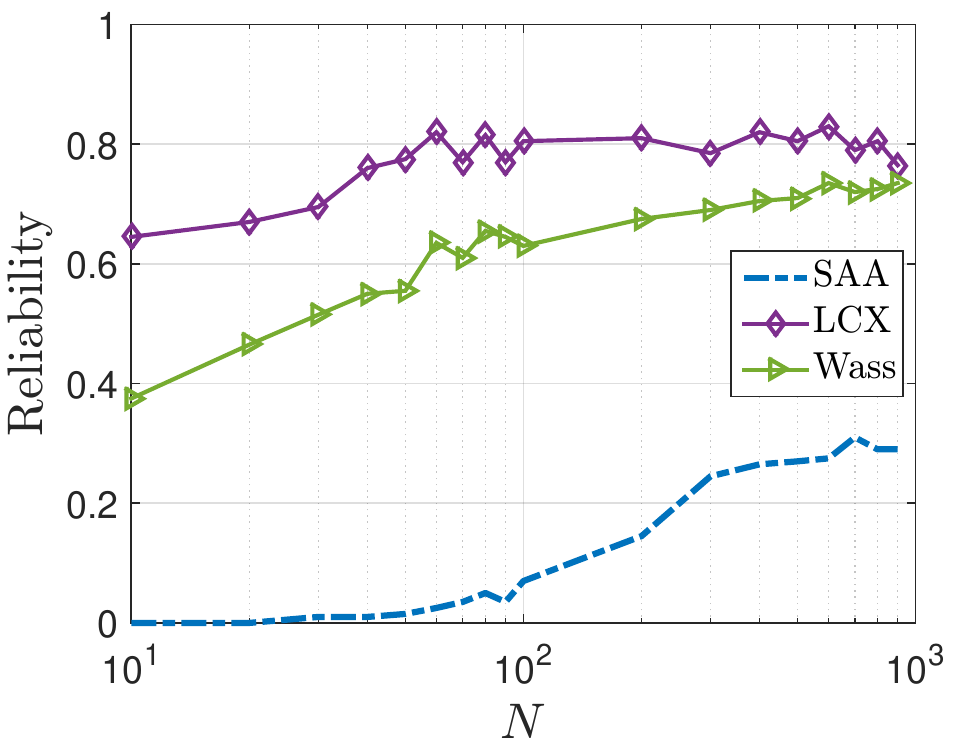}} \hspace{1mm}	
	\subfigure[$k$-fold cross validation]{\label{fig:perf-k}
		\includegraphics[width=0.31\columnwidth]{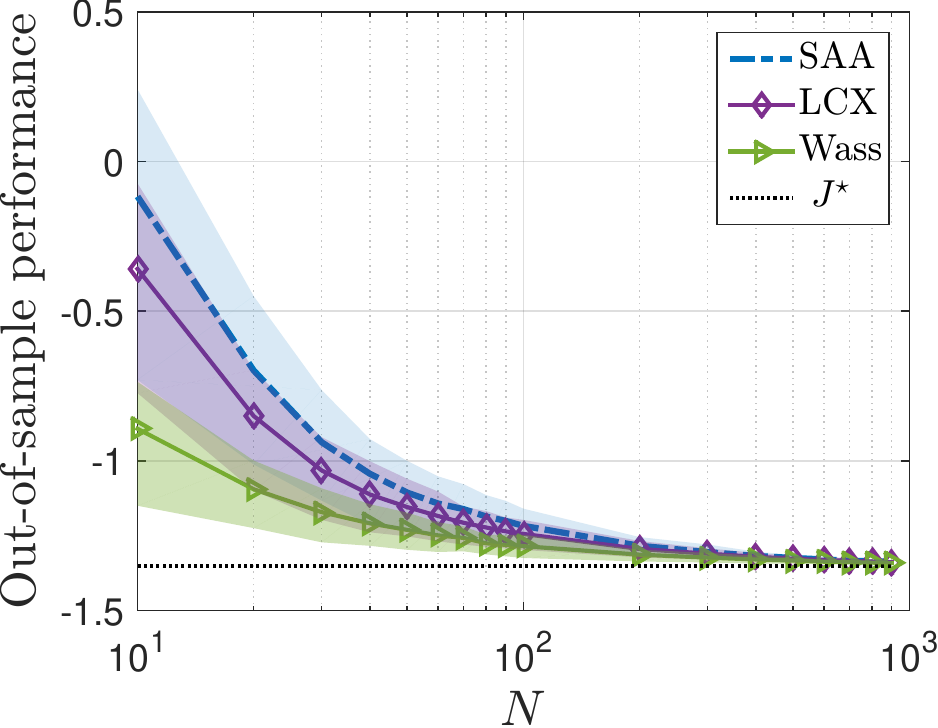}} \hspace{1mm}
	\subfigure[$k$-fold cross validation]{\label{fig:cert-k} 
		\includegraphics[width=0.31\columnwidth]{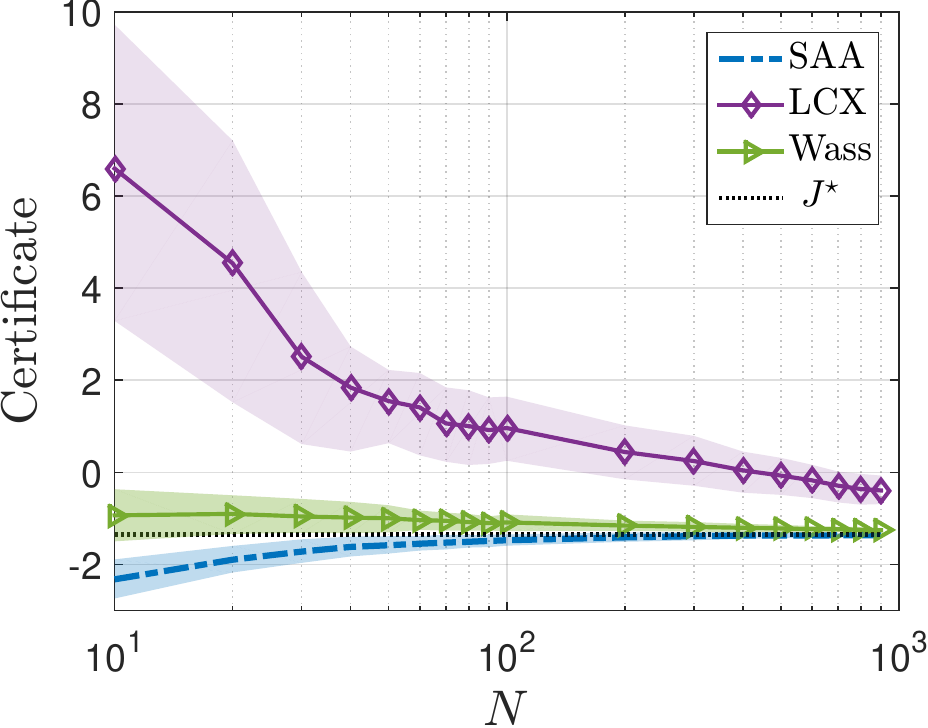}} \hspace{1mm}
	\subfigure[$k$-fold cross validation]{\label{fig:beta-k} 
		\includegraphics[width=0.31\columnwidth]{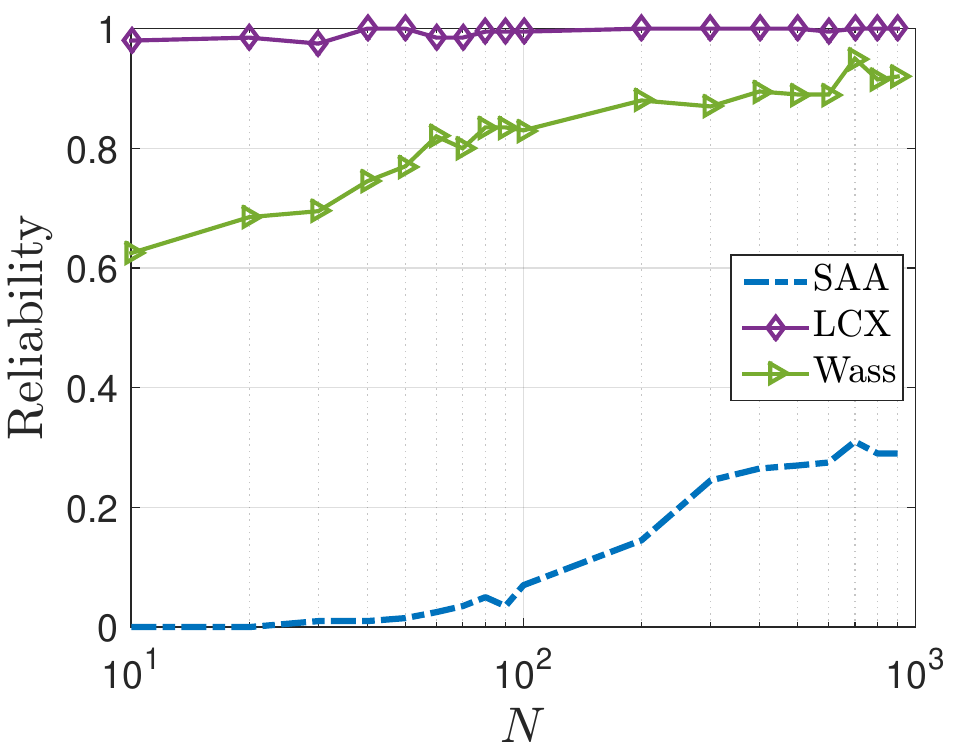}} \hspace{1mm}	
	\subfigure[Optimal size]{\label{fig:perf-best}
		\includegraphics[width=0.31\columnwidth]{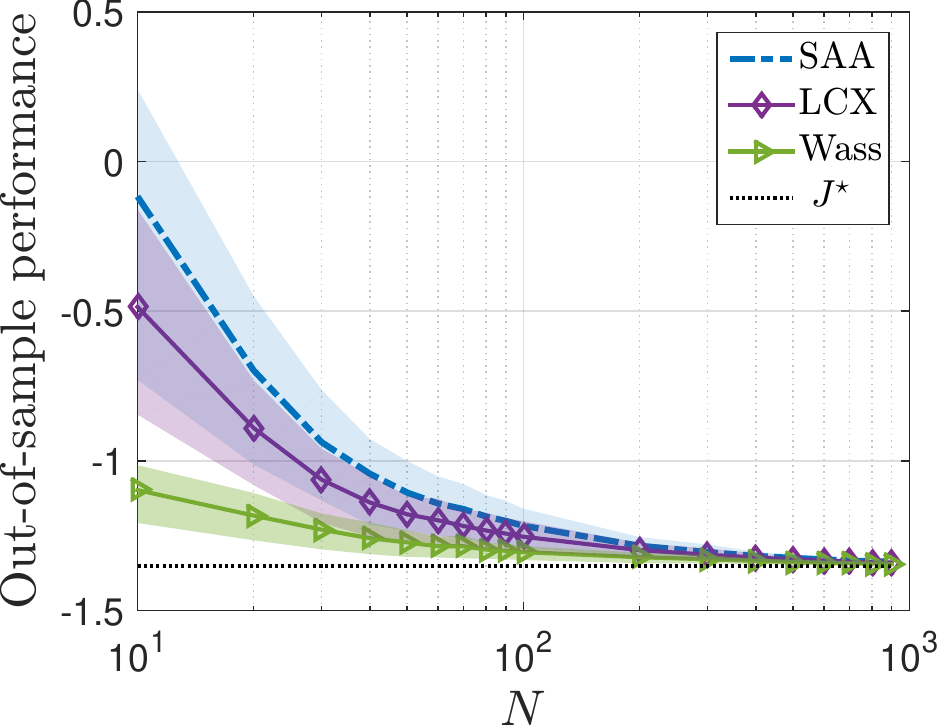}} \hspace{1mm}
	\subfigure[Optimal size]{\label{fig:cert-best}
		 \includegraphics[width=0.31\columnwidth]{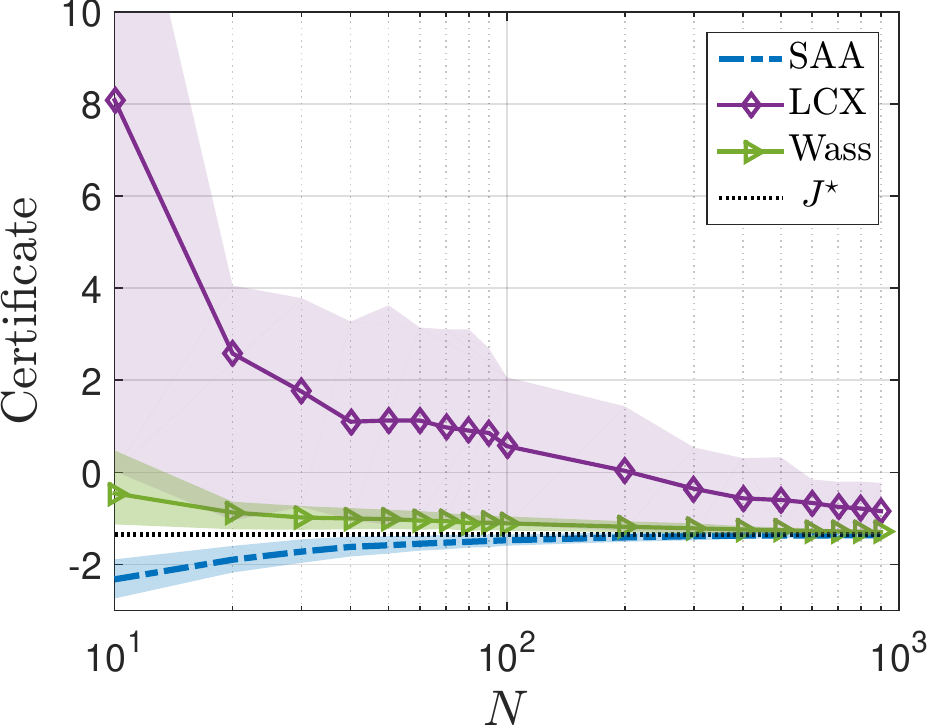}} \hspace{1mm}
	\subfigure[Optimal size]{\label{fig:beta-best}
		\includegraphics[width=0.31\columnwidth]{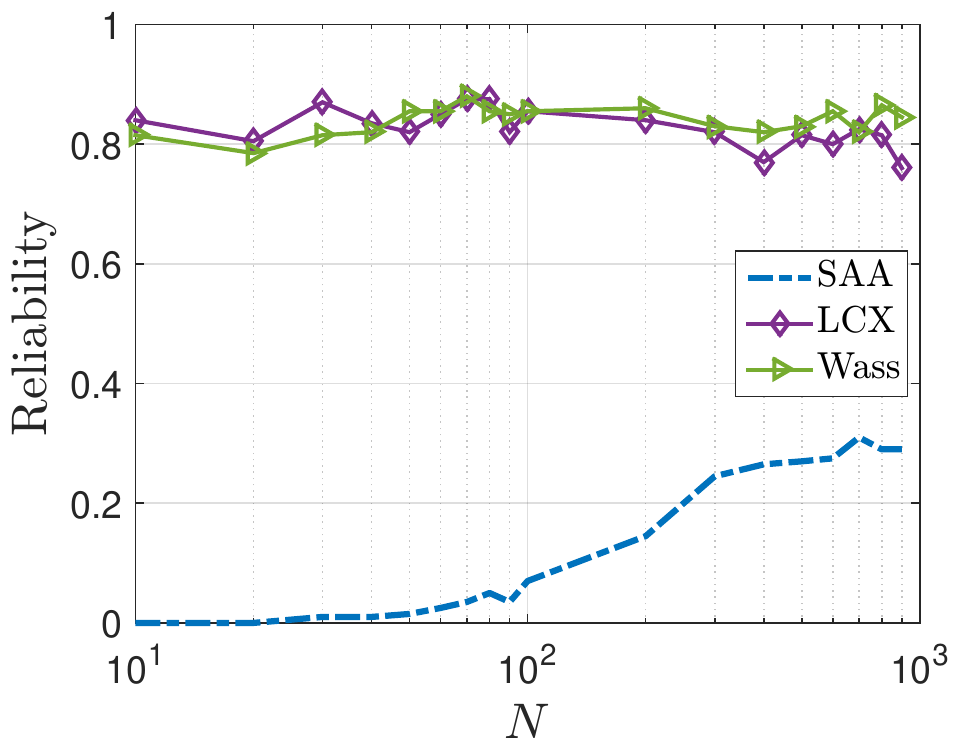}} 
	\caption{Out-of-sample performance $J(\xdd)$, certificate $\Jdd$, and certificate reliability $\PP^N\big[J(\xdd) \le \Jdd\big]$ for the performance-driven SAA, LCX and Wasserstein solutions as a function of $N$}
	\label{fig:perf:N}
\end{figure*}

In Figures~\ref{fig:perf-val}--\ref{fig:beta-val} the sizes of the (LCX and Wasserstein) ambiguity sets are determined via the holdout method, where $80\%$ of the data are used for training and $20\%$ for validation. Figure~\ref{fig:perf-val} visualizes the tube between the $20\%$ and $80\%$ quantiles (shaded areas) as well as the mean value (solid lines) of the out-of-sample performance $J(\xdd)$ as a function of the sample size $N$ and based on 200 independent simulation runs, where $\xdd$ is set to the minimizer of the SAA (blue), LCX (purple) and Wasserstein (green) problems, respectively. The constant dashed line represents the optimal value $\J$ of the original stochastic program~\eqref{Ex-true}, which is computed through an SAA problem with $N = 10^6$ samples. We observe that the Wasserstein solutions tend to be superior to the SAA and LCX solutions in terms of out-of-sample performance.

Figure~\ref{fig:cert-val} shows the optimal values $\Jdd$ of the SAA, LCX and Wasserstein problems, where the sizes of the ambiguity sets are chosen via the holdout method. Unlike Figure~\ref{fig:perf-val}, Figure~\ref{fig:cert-val} thus reports {\em in-sample} estimates of the achievable portfolio performance. As expected, the SAA approach is over-optimistic due to the optimizer's curse, while the LCX and Wasserstein approaches err on the side of caution. All three methods are known to enjoy asymptotic consistency, which is in agreement with all in-sample and out-of-sample results.

Figure~\ref{fig:beta-val} visualizes the reliability of the different performance certificates, that is, the empirical probability of the event $J(\xdd) \le \Jdd$ evaluated over 200 independent simulation runs. Here, $\xdd$ represents either an optimal portfolio of the SAA, LCX or Wasserstein problems, while $\Jdd$ denotes the corresponding optimal value. The optimal SAA portfolios display a disappointing out-of-sample performance relative to the optimistically biased mimimum of the SAA problem---particularly when the training data is scarce. In contrast, the out-of-sample performance of the optimal LCX and Wasserstein portfolios often undershoots~$\Jdd$.

Figures~\ref{fig:perf-k}--\ref{fig:beta-k} show the same graphs as Figures~\ref{fig:perf-val}--\ref{fig:beta-val}, but now the sizes of the ambiguity sets are determined via $k$-fold cross validation with $k=5$. In this case, the out-of-sample performance of both distributionally robust methods improves slightly, while the corresponding certificates and their reliabilities increase significantly with respect to the na\"ive holdout method. However, these improvements come at the expense of a $k$-fold increase in the computational cost.  

One could think of numerous other statistical methods to select the size of the Wasserstein ambiguity set. As discussed above, however, if the ultimate goal is to minimize the out-of-sample performance of $\xdd(\eps)$, then the best possible choice is $\eps=\wh \eps_N^{\rm \; opt}$. Similarly, one can construct a size parameter for the LCX ambiguity set that leads to the best possible out-of-sample performance of any LCX solution. We emphasize that these optimal Wasserstein radii and LCX size parameters are not available in practice because computing $J(\xdd(\eps))$ requires knowledge of the data-generating distribution. In our experiments we evaluate $J(\xdd(\eps))$ to high accuracy for every fixed $\eps\in\mathcal E$ using $2\cdot 10^5$ validation samples, which are independent from the (much fewer) training samples used to compute $\xdd(\eps)$. Figures~\ref{fig:perf-best}--\ref{fig:beta-best} show the same graphs as Figures~\ref{fig:perf-val}--\ref{fig:beta-val} for optimally sized ambiguity sets. By construction, no method for sizing the Wasserstein or LCX ambiguity sets can result in a better out-of-sample performance, respectively. In this sense, the graphs in Figure~\ref{fig:perf-best} capture the fundamental limitations of the different distributionally robust schemes.

\subsubsection{\bf Portfolios Driven by Reliability}
\label{subsub:sim:N2}

In Section~\ref{subsub:sim:N} the Wasserstein radii and LCX size parameters were calibrated with the goal to achieve the best out-of-sample performance. Figures~\ref{fig:perf:N}(c),~\ref{fig:perf:N}(f) and~\ref{fig:perf:N}(i) reveal, however, that by optimizing the out-of-sample performance one may sacrifice reliability. An alternative objective more in line with the general philosophy of Section~\ref{sec:prob} would be to choose Wasserstein radii that guarantee a prescribed reliability level. Thus, for a given $\beta\in[0,1]$ we should find the smallest Wasserstein radius $\eps\geq 0$ for which the optimal value $\Jdd(\eps)$ of~\eqref{dro:portfolio} provides an upper $1-\beta$ confidence bound on the out-of-sample performance $J(\xdd(\eps))$ of its optimal solution. As the true distribution $\PP$ is unknown, however, the optimal Wasserstein radius corresponding to a given~$\beta$ cannot be computed exactly. Instead, we must derive an estimator $\wh \eps_N^{\; \beta}$ that depends on the training data. We construct $\wh \eps_N^{\; \beta}$ and the corresponding reliability-driven portfolio via bootstrapping as follows:
\begin{enumerate}
\item Construct $k$ resamples of size $N$ (with replacement) from the original training dataset. It is well known that, as $N$ grows, the probability that any fixed training data point appears in a particular resample converges to $\frac{e-1}{e}\approx \frac{2}{3}$. Thus, about $\frac{N}{3}$ training samples are absent from any resample. We collect all unused samples in a validation dataset. 
\item For each resample $\kappa=1,\ldots, k$ and $\eps\geq 0$, solve problem~\eqref{dro:portfolio} using the Wasserstein ball of radius $\eps$ around the empirical distribution $\Pem^\kappa$ on the $\kappa$-th resample. The resulting optimal decision and optimal value are denoted as $\wh x_N^\kappa(\eps)$ and $\wh J_N^\kappa(\eps)$, respectively. Next, estimate the out-of-sample performance $J(\xdd^\kappa(\eps))$ of $\xdd^\kappa(\eps)$ using the sample average over the $\kappa$-th validation dataset.
\item Set $\wh \eps_N^{\; \beta}$ to the smallest $\eps\geq 0$ so that the certificate $\wh J_N^\kappa(\eps)$ exceeds the estimate of $J(\wh x_N^\kappa(\eps))$ in at least $(1-\beta)\times k$ different resamples. 
\item Compute the data-driven portfolio $\xdd=\xdd(\wh \eps_N^{\; \beta})$ and the corresponding certificate $\Jdd=\wh J_N(\wh \eps_N^{\; \beta})$ using the original training dataset.
\end{enumerate}
As in Section~\ref{subsub:sim:N}, we compare the Wasserstein approach with the LCX and SAA approaches. Specifically, by using bootstrapping, we calibrate the size of the LCX ambiguity set so as to guarantee a desired reliability level $1-\beta$. The SAA problem, on the other hand, has no free parameter that can be tuned to meet a prescribed reliability target. Nevertheless, we can construct a meaningful certificate of the form $\Jdd(\Delta):=\Jsaa+\Delta$ for the SAA portfolio by adding a non-negative constant to the optimal value of the SAA problem. Our aim is to find the smallest offset $\Delta\geq 0$ with the property that $\Jdd(\Delta)$ provides an upper $1-\beta$ confidence bound on the out-of-sample performance $J(\xsaa)$ of the optimal SAA portfolio $\xsaa$. The optimal offset corresponding to a given~$\beta$ cannot be computed exactly. Instead, we must derive an estimator $\wh \Delta_N^{\; \beta}$ that depends on the training data. Such an estimator can be found through a simple variant of the above bootstrapping procedure.

\begin{figure*} [t]
    \centering
    \subfigure[$\beta = 10\%$]{\label{fig:perf-boot-10}
    \includegraphics[width=0.31\columnwidth]{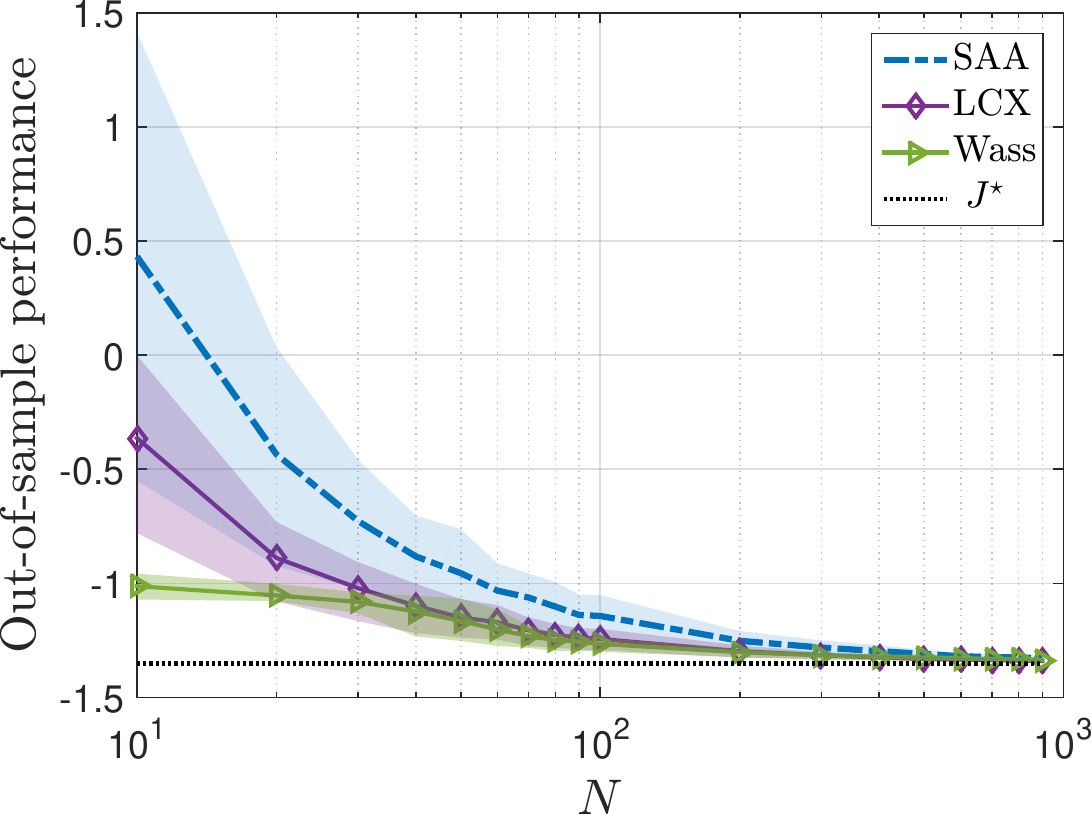}} \hspace{1mm}
    \subfigure[$\beta = 10\%$]{\label{fig:cert-boot-10}
    \includegraphics[width=0.31\columnwidth]{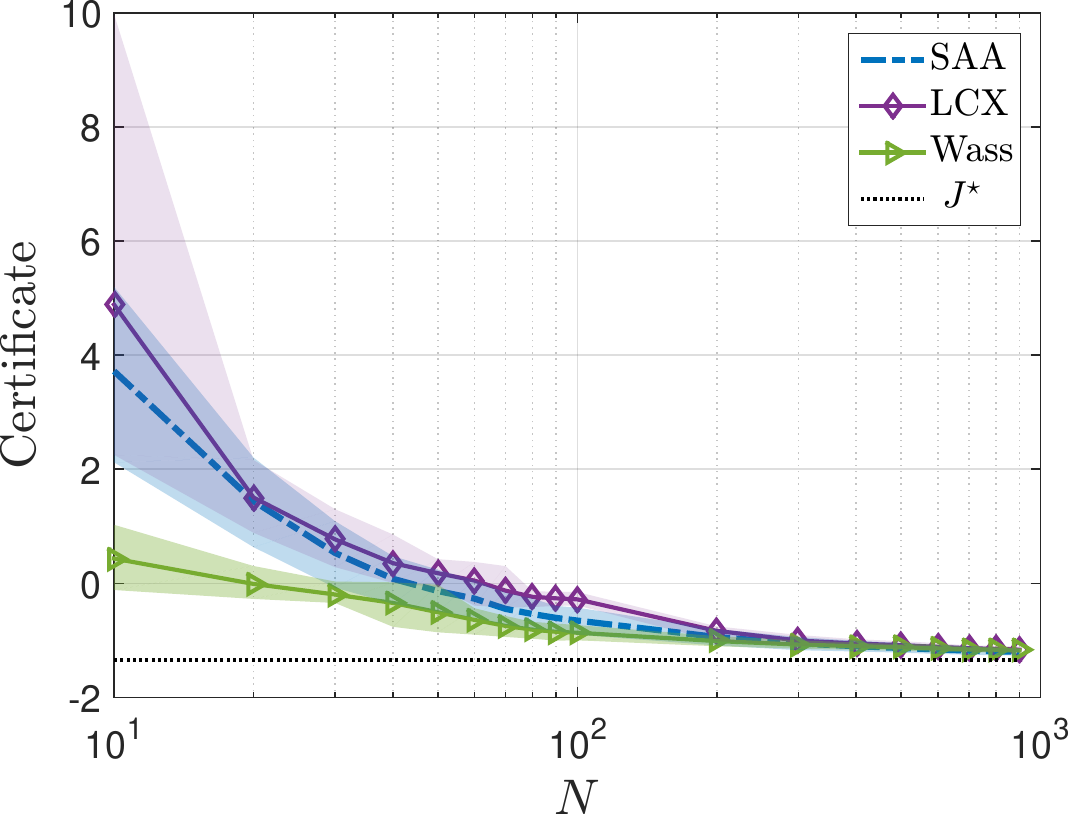}} \hspace{1mm}
    \subfigure[$\beta = 10\%$]{\label{fig:beta-boot-10}
    \includegraphics[width=0.31\columnwidth]{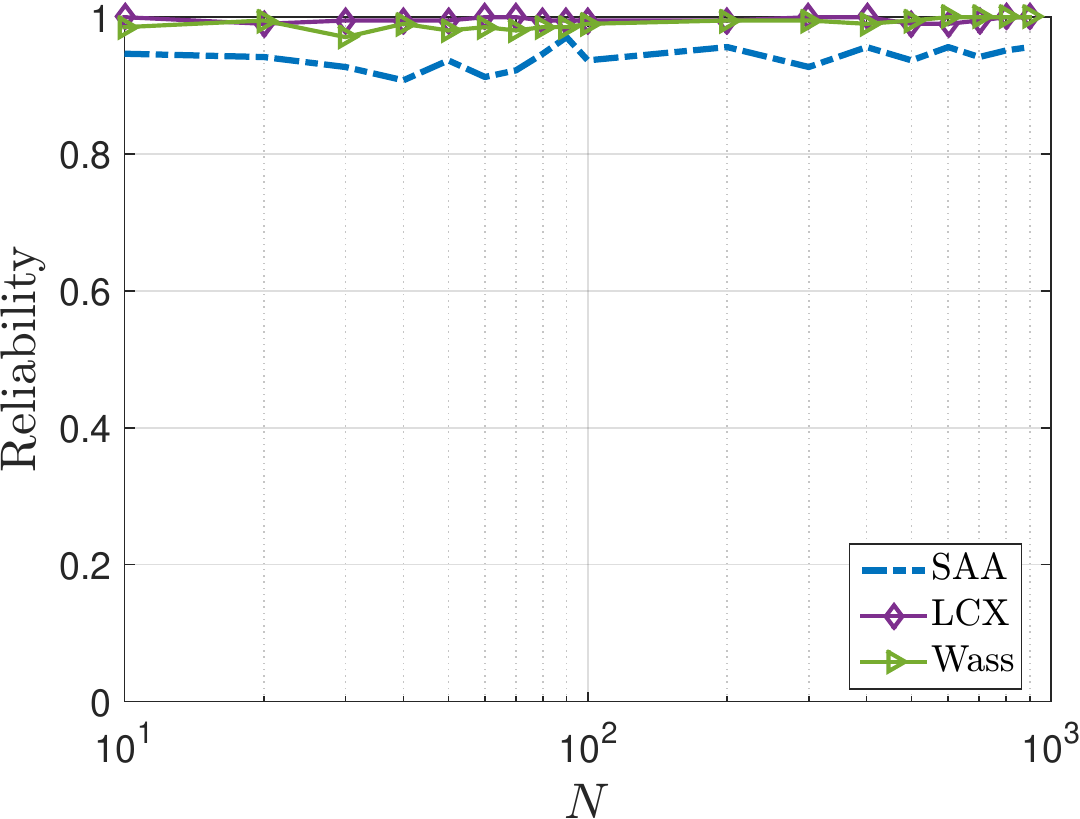}} \hspace{1mm}
    \subfigure[$\beta = 25\%$]{\label{fig:perf-boot-25}
    \includegraphics[width=0.31\columnwidth]{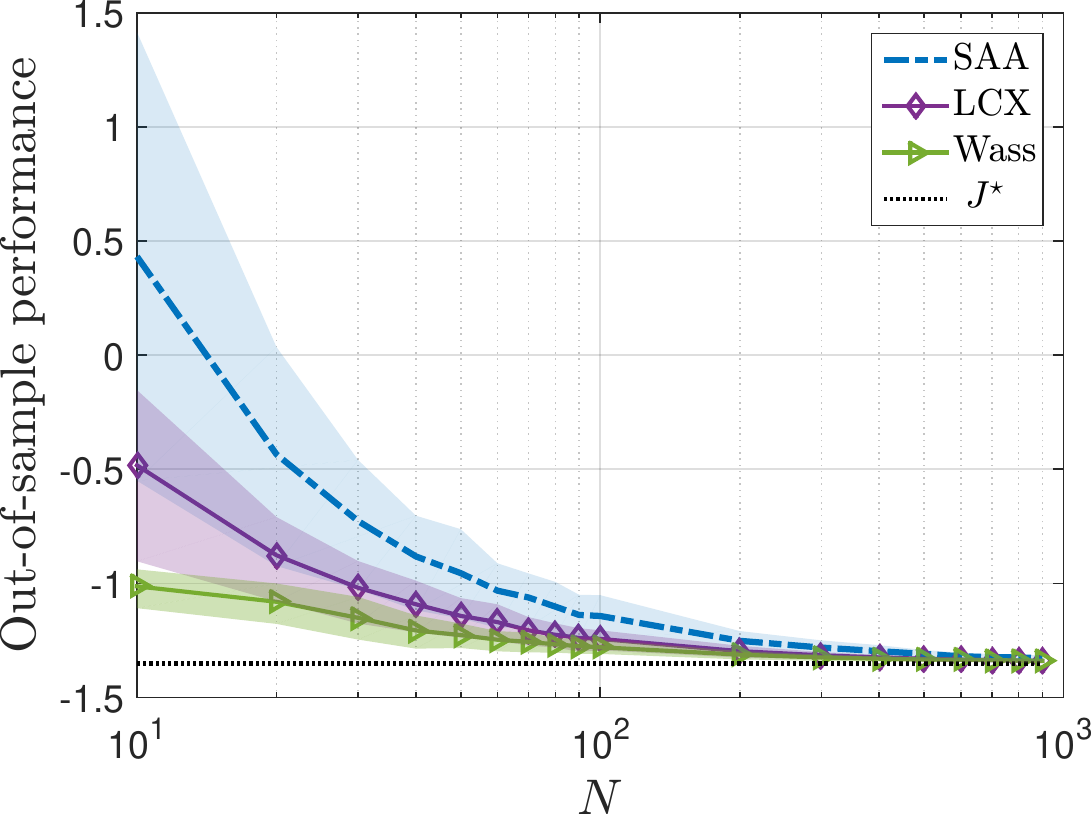}} \hspace{1mm}
    \subfigure[$\beta = 25\%$]{\label{fig:cert-boot-25}
    \includegraphics[width=0.31\columnwidth]{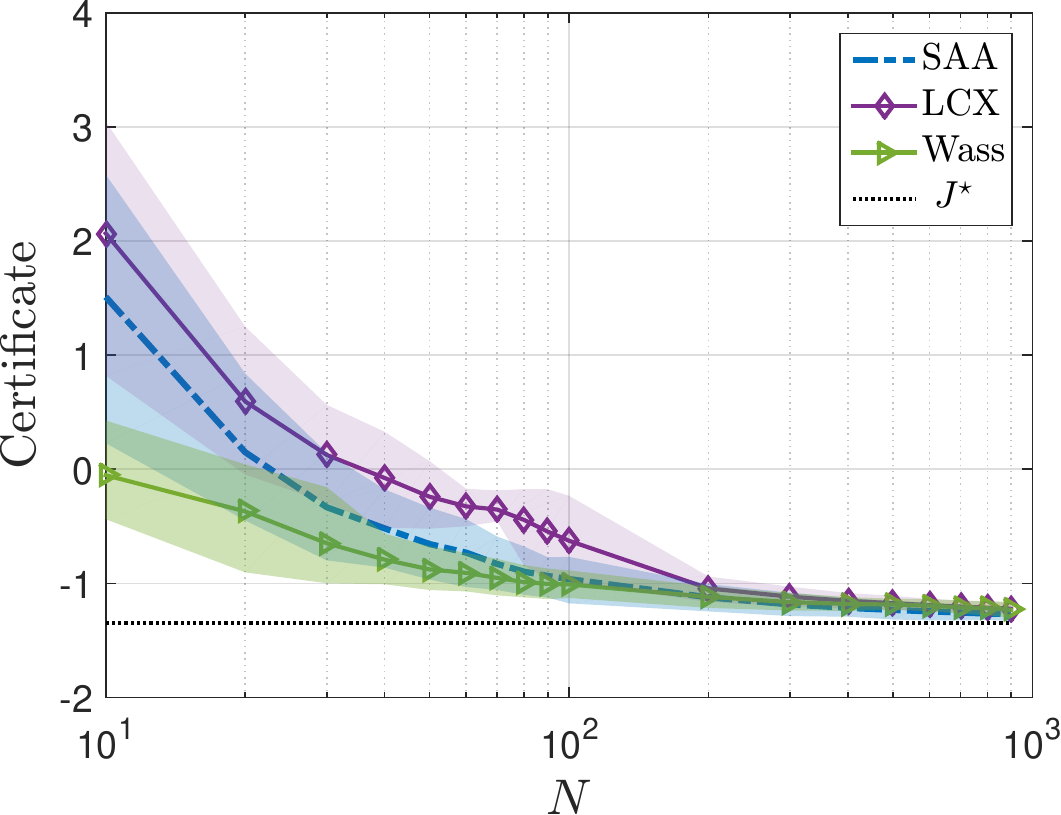}} \hspace{1mm}
    \subfigure[$\beta = 25\%$]{\label{fig:beta-boot-25}
    \includegraphics[width=0.31\columnwidth]{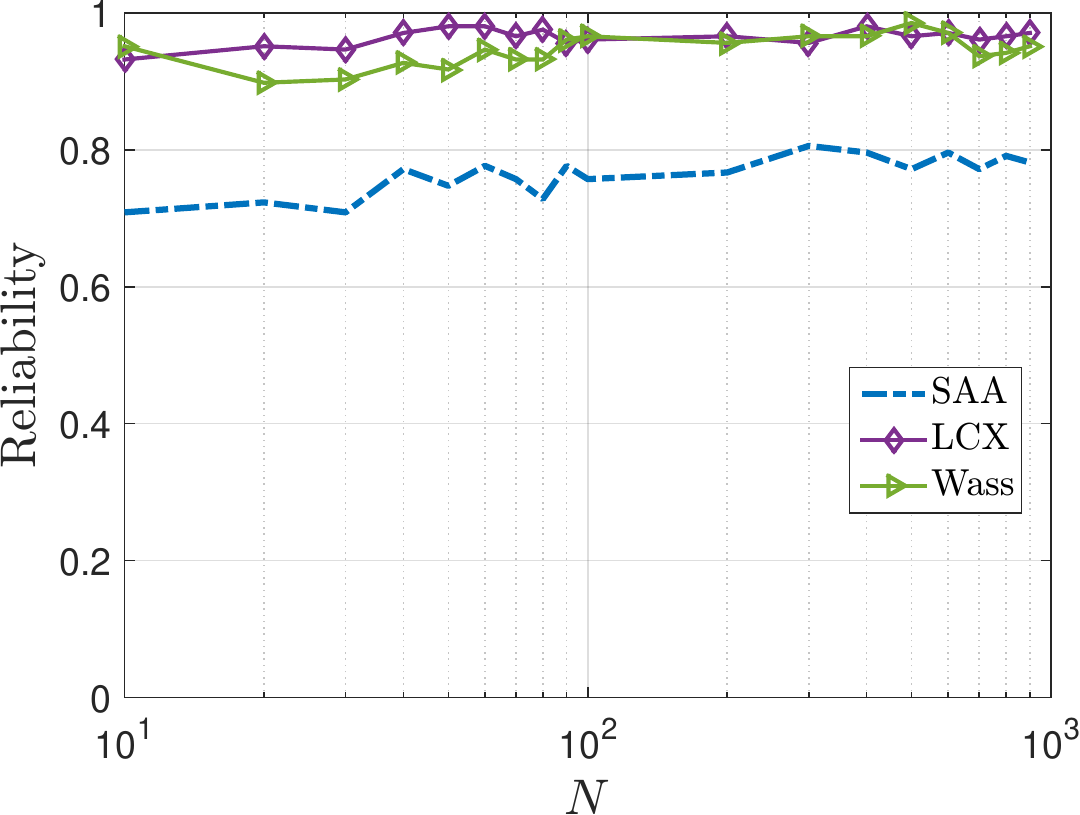}} \hspace{1mm}
    \caption{Out-of-sample performance $J(\xdd)$, certificate $\Jdd$, and certificate reliability $\PP^N\big[J(\xdd) \le \Jdd\big]$ for the reliability-driven SAA, LCX and Wasserstein portfolios as a function of $N$}
    \label{fig:boot}
\end{figure*}

In all experiments we set the number of resamples to $k=50$. Figures~\ref{fig:boot}(a)--\ref{fig:boot}(c) visualize the out-of-sample performance, the certificate and the empirical reliability of the reliability-driven portfolios obtained with the SAA, LCX and Wasserstein approaches, respectively, for the reliability target $1-\beta=90\%$ and based on 200 independent simulation runs. Figures~\ref{fig:boot}(d)--\ref{fig:boot}(f) show the same graphs as Figures~\ref{fig:boot}(a)--\ref{fig:boot}(c) but for the reliability target $1-\beta=75\%$. We observe that the new SAA certificate now overestimates the true optimal value of the portfolio problem. Moreover, while the empirical reliability of the SAA solution now closely matches the desired reliability target, the empirical reliabilities of the LCX and Wasserstein solutions are similar but noticeably exceed the prescribed reliability threshold. A possible explanation for this phenomenon is that the $k$ resamples generated by the bootstrapping algorithm are not independent, which may give rise to a systematic bias in estimating the Wasserstein radii required for the desired reliability levels.

\subsubsection{\bf Impact of the Sample Size on the Wasserstein Radius}
\label{subsub:sim:N3}

It is instructive to analyze the dependence of the Wasserstein radii on the sample size $N$ for different data-driven schemes. As for the performance-driven portfolios from Section~\ref{subsub:sim:N}, Figure~\ref{fig:eps:N} depicts the best possible Wasserstein radius $\wh\eps_N^{\rm \; opt}$ as well as the Wasserstein radii $\wh\eps_N^{\rm \; hm}$ and $\wh\eps_N^{\rm \; cv}$ obtained by the holdout method and via $k$-fold cross validation, respectively. As for the reliability-driven portfolios from Section~\ref{subsub:sim:N2}, Figure~\ref{fig:eps:N} further depicts the Wasserstein radii $\wh\eps_N^{\beta}$, for $\beta\in\{10\%,25\%\}$, obtained by bootstrapping. All results are averaged across 200 independent simulation runs. As expected from Theorem~\ref{thm:convergence}, all Wasserstein radii tend to zero as $N$ increases. Moreover, the convergence rate is approximately equal to $N^{-\frac{1}{2}}$. This rate is likely to be optimal. Indeed, if $\X$ is a singleton, then every quantile of the sample average estimator $\Jsaa$ converges to $\J$ at rate $N^{-\frac{1}{2}}$ due to the central limit theorem. Thus, if $\wh \eps_N= o(N^{-\frac{1}{2}})$, then $\Jdd$ also converges to $\J$ at leading order $N^{-\frac{1}{2}}$ by Theorem~\ref{thm:convex}, which applies as the loss function is convex. This indicates that the a~priori rate $N^{-\frac{1}{m}}$ suggested by Theorem~\ref{thm:concentration} is too pessimistic in~practice.

\begin{figure} [t]
	\centering
	\includegraphics[width=.43\columnwidth]{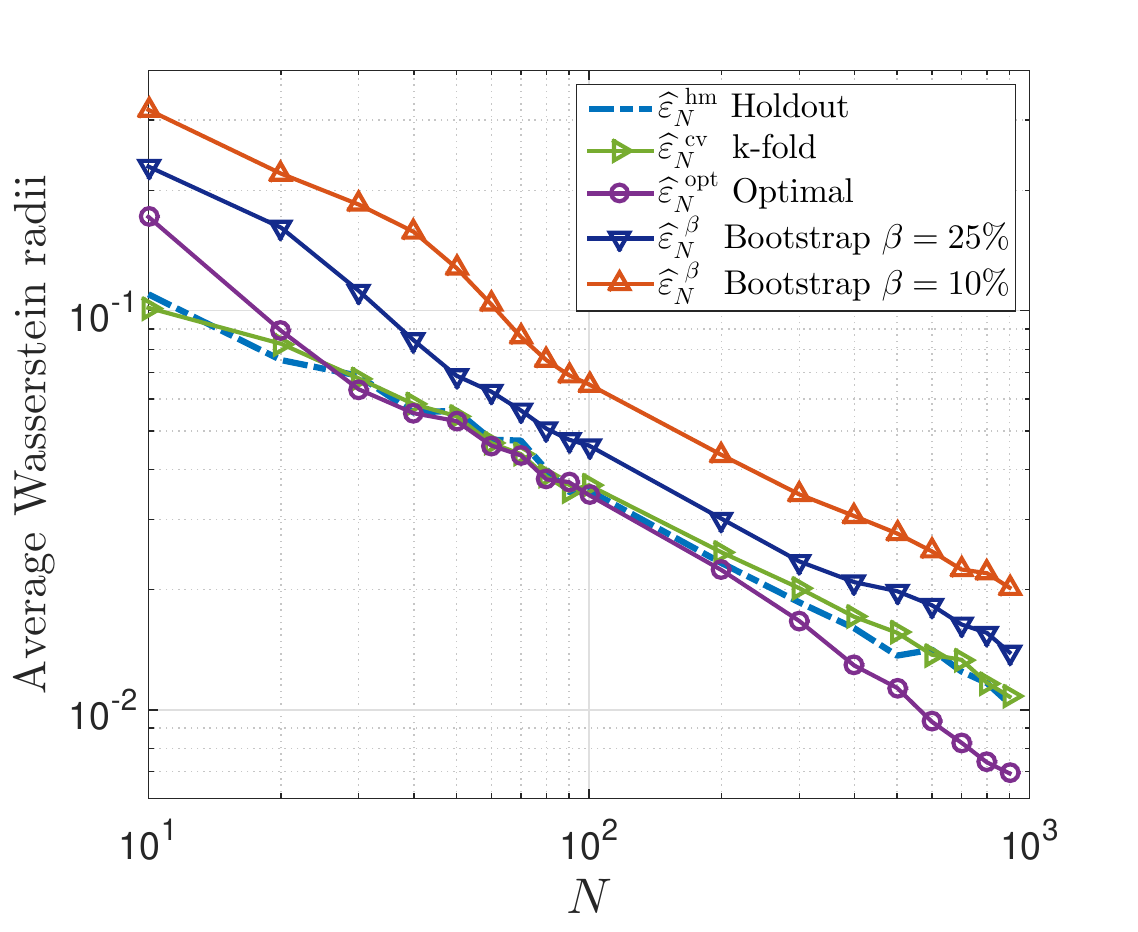} 
	\caption{Optimal performance-driven Wasserstein radius $\wh\eps_N^{\rm\; opt}$ and its estimates $\wh\eps_N^{\rm\; hm}$ and $\wh\eps_N^{\rm\; cv}$ obtained via the holdout method and $k$-fold cross validation, respectively, as well as the reliability-driven Wasserstein radius $\wh\eps_N^{\beta}$ for $\beta\in\{10\%,25\%\}$ obtained via  bootstrapping}
	\label{fig:eps:N}
\end{figure}

%-------------------------------------------------------------------------------
\subsection{Simulation Results: Uncertainty Quantification}
%-------------------------------------------------------------------------------

\label{sec:uqsimulation}

Investors often wish to determine the probability that a given portfolio will outperform various benchmark indices or assets. Our results on uncertainty quantification developed in Section~\ref{subsec:UQ} enable us to compute this probability in a meaningful way---solely on the basis of the training dataset. 

Assume for example that we wish to quantify the probability that any data-driven portfolio $\xdd$ outperforms the three most risky assets in the market {\em jointly}. Thus, we should compute the probability of the closed polytope
\begin{align*}
	\wh{\set{A}} = \Big\{\xi \in \R^m ~:~ \inner{\xdd}{\xi}\geq \xi_i ~ \forall i=8,9,10 \Big \}.
\end{align*}
As the true distribution $\PP$ is unknown, the probability $\PP[\xi \in \wh{\set A}]$ cannot be evaluated exactly. Note that $\wh{\set A}$ as well as $\PP[\xi \in \wh{\set A}]$ constitute random objects that depend on $\xdd$ and thus on the training data. Using the {\em same} training dataset that was used to compute $\xdd$, however, we may estimate $\PP[\xi \in \wh{\set A}]$ from above and below by 
\begin{align*}
 	\sup\limits_{\Q \in \ball{\Pem}{\eps}} \Q \left[ \xi\in \wh{\set A}\right]\qquad \text{and}\qquad 
 	\inf\limits_{\Q \in \ball{\Pem}{\eps}} \Q \left[ \xi\in \wh{\set A}\right] = 1 - \sup\limits_{\Q \in \ball{\Pem}{\eps}} \Q \left[ \xi\notin \wh{\set A}\right],
\end{align*}
respectively. Indeed, recall that the true data-generating probability distribution resides in the Wasserstein ball of radius $\eps_N(\beta)$ defined in~\eqref{eps_N} with probability $1-\beta$. Therefore, we have 
\begin{align*}
		1 - \beta  \le \PP^N\Big[\Xiem : \PP \in \ball{\Pem}{\eps_N(\beta)}\Big] & \le 	\PP^N\Big[\Xiem : ~ \sup_{\Q \in \ball{\Pem}{\eps_N(\beta)}} \Q\big[{\mathbb A}\big] \ge \PP\big[{\mathbb A}\big] \quad \forall \mathbb A \in \borel(\Xi) \Big] \\
		& = \PP^N\Big[\Xiem : ~ \inf_{\mathbb A \in \borel(\Xi)} \sup_{\Q \in \ball{\Pem}{\eps_N(\beta)}} \Q\big[{\mathbb A}\big] - \PP\big[{\mathbb A}\big] \ge 0 \Big],
\end{align*}
where $\borel(\Xi)$ denotes the set of all Borel subsets of $\Xi$. The data-dependent set $\wh{\set A}_N$ can now be viewed as a (measurable) mapping from $\Xiem$ to the subsets in $\borel(\Xi)$. The above inequality then implies
\begin{align*}
		\PP^N\Big[\Xiem : ~  \sup_{\Q \in \ball{\Pem}{\eps_N(\beta)}} \Q\big[{\wh{\set A}_N}\big] - \PP\big[{\wh{\set A}_N}\big] \ge 0 \Big]\ge 1-\beta.
		\end{align*}
Thus, $ \sup\{\Q [{\wh{\set A}_N}]:\Q \in \ball{\Pem}{\eps_N(\beta)}\}$ provides indeed an upper bound on $\PP[{\wh{\set A}_N}]$ with confidence $1-\beta$. Similarly, one can show that $ \inf \{\Q[{\wh{\set A}_N}]: \Q \in \ball{\Pem}{\eps_N(\beta)}\}$ provides a lower confidence bound on $\PP[{\wh{\set A}_N}]$.

The upper confidence bound can be computed by solving the linear program~\eqref{chance-worst}. Replacing $\wh{\set A}$ with its interior in the lower confidence bound leads to another (potentially weaker) lower bound that can be computed by solving the linear program~\eqref{chance-best}. We denote these computable bounds by $\Jdd^+(\eps)$ and $\Jdd^-(\eps)$, respectively. In all subsequent experiments $\xdd$ is set to a solution of the distributionally robust program~\eqref{dro:portfolio} calibrated via $k$-fold cross validation as described in Section~\ref{subsub:sim:N}. %Similar to the previous example, we illustrate the results in two parts: the impact of the Wasserstein radii $\eps$ in Subsection~\ref{subsubsec:sim:UQ_eps}, and the size $N$ of the training dataset in Subsection~\ref{subsubsec:sim:UQ_N}. 

\subsubsection{\bf Impact of the Wasserstein Radius}\label{subsubsec:sim:UQ_eps}

\begin{figure}[t!]
	\centering
	\subfigure[$N=30$]{\label{fig:UQ:30} \includegraphics[width=0.43\columnwidth]{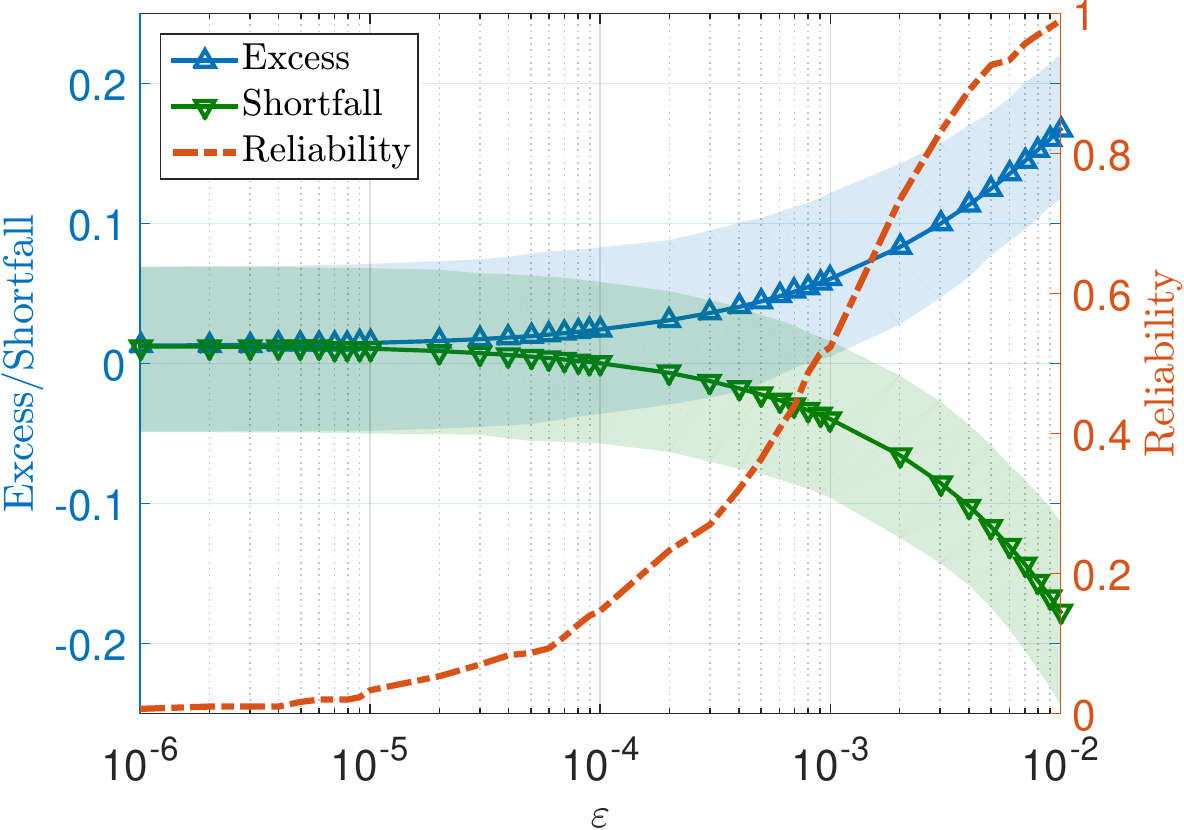}} \quad
	\subfigure[$N=300$]{\label{fig:UQ:300} \includegraphics[width=0.43\columnwidth]{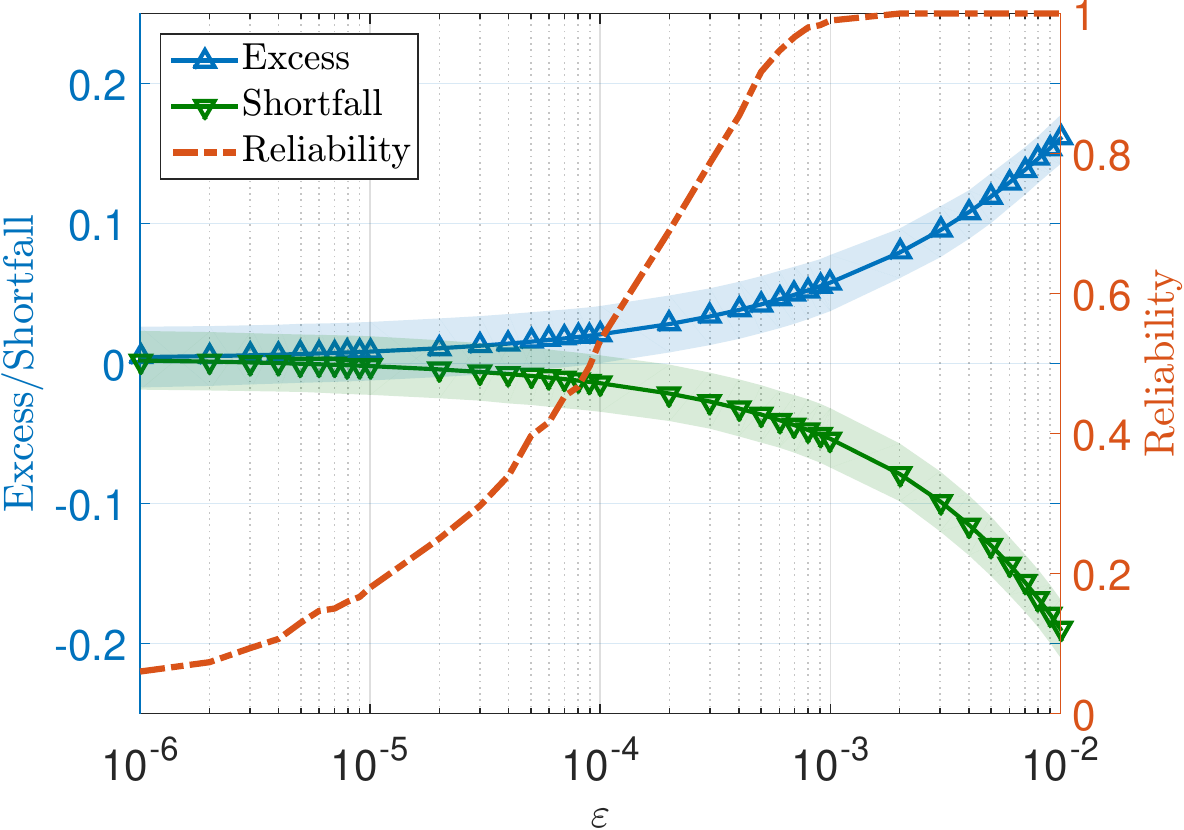}}
	\caption{Excess $\Jdd^+(\eps)- \PP[\wh{\set A}]$ and shortfall $\Jdd^-(\eps)- \PP[\wh{\set A}]$ (solid lines, left axis) as well as reliability $\PP^N[\Jdd^-(\eps) \leq \PP[\wh{\set A}] \leq \Jdd^+(\eps)]$ (dashed lines, right axis) as a function of $\eps$}
	\label{fig:UQ_radius}
\end{figure}

As $\Jdd^+(\eps)$ and $\Jdd^-(\eps)$ estimate a random target $\PP[\wh{\set A}]$, it makes sense to filter out the randomness of the target and to study only the differences $\Jdd^+(\eps)- \PP[\wh{\set A}]$ and $\Jdd^-(\eps)- \PP[\wh{\set A}]$. Figures~\ref{fig:UQ:30} and~\ref{fig:UQ:300} visualize the empirical mean (solid lines) as well as the tube between the empirical 20\% and 80\% quantiles (shaded areas) of these differences as a function of the Wasserstein radius $\eps$, based on 200 training datasets of cardinality $N = 30$ and $N=300$, respectively. %Each tube represents the quantile interval of $[20\%, 80\%]$, where the centerline is the average over all the experiments. 
Figure~\ref{fig:UQ_radius} also shows the empirical reliability of the bounds (dashed lines), that is, the empirical probability of the event $\Jdd^-(\eps) \leq \PP[\wh{\set A}] \leq \Jdd^+(\eps)$. Note that the reliability drops to 0 for $\eps=0$, in which case both $\Jdd^+(0)$ and $\Jdd^-(0)$ coincide with the SAA estimator for $\PP[\wh{\set A}]$. Moreover, at $\eps=0$ the set $\wh{\set A}$ is constructed from the SAA portfolio $\xdd$, whose performance is overestimated on the training dataset. Thus, the SAA estimator for $\PP[\wh{\set A}]$, which is evaluated using the same training dataset, is positively biased. For $\eps>0$, finally, the reliability increases as the shaded confidence intervals move away from 0.

\subsubsection{\bf Impact of the Sample Size}\label{subsubsec:sim:UQ_N}

We propose a variant of the $k$-fold cross validation procedure for selecting $\eps$ in uncertainty quantification. Partition $\data_1,\ldots,\data_N$ into $k$ subsets and repeat the following holdout method $k$ times. Select one of the subsets as the validation set of size $N_V$ and merge the remaining $k-1$ subsets to a training dataset of size $N_T=N-N_V$. Use the validation set to compute the SAA estimator of $\PP[\wh{\set A}]$, and use the training dataset to compute $\wh J_{N_T}^+(\eps)$ for a large but finite number of candidate radii $\eps$. Set $\wh \eps_N^{\; \rm hm}$ to the smallest candidate radius for which the SAA estimator of $\PP[\wh{\set A}]$ is not larger than $\wh J_{N_T}^+(\eps)$. Next, set $\wh \eps_N^{\rm \; cv}$ to the average of the Wasserstein radii obtained from the $k$ holdout runs, and report $\Jdd^+=\wh J_{N}^+(\wh \eps_N^{\rm \; cv})$ as the data-driven upper bound on $\PP[\wh{\set A}]$. The data-driven lower bound $\Jdd^-$ is constructed analogously in the obvious way. 

Figure~\ref{fig:UQ_N} visualizes the empirical means (solid lines) as well as the tubes between the empirical 20\% and 80\% quantiles (shaded areas) of $\Jdd^+-\PP[\wh{\set A}]$ and $\Jdd^--\PP[\wh{\set A}]$ as a function of the sample size $N$, based on 300 independent training datasets. As expected, the confidence intervals shrink and converge to $0$ as $N$ increases. We emphasize that $\Jdd^+$ and $\Jdd^-$ are computed solely on the basis of $N$ training samples, whereas the computation of $\PP[\wh{\set A}]$ necessitates a much larger dataset, particularly if $\wh{\set A}$ constitutes a rare event. 

\begin{figure*} [t]
	\centering
	\subfigure[Excess $\Jdd^+-\PP{[}\wh{\set A}{]}$ and shortfall $\Jdd^--\PP{[}\wh{\set A}{]}$ of the data-driven confidence bounds for $\PP{[}\wh{\set A}{]}$]{\label{fig:UQ_N} \includegraphics[width=0.465\columnwidth]{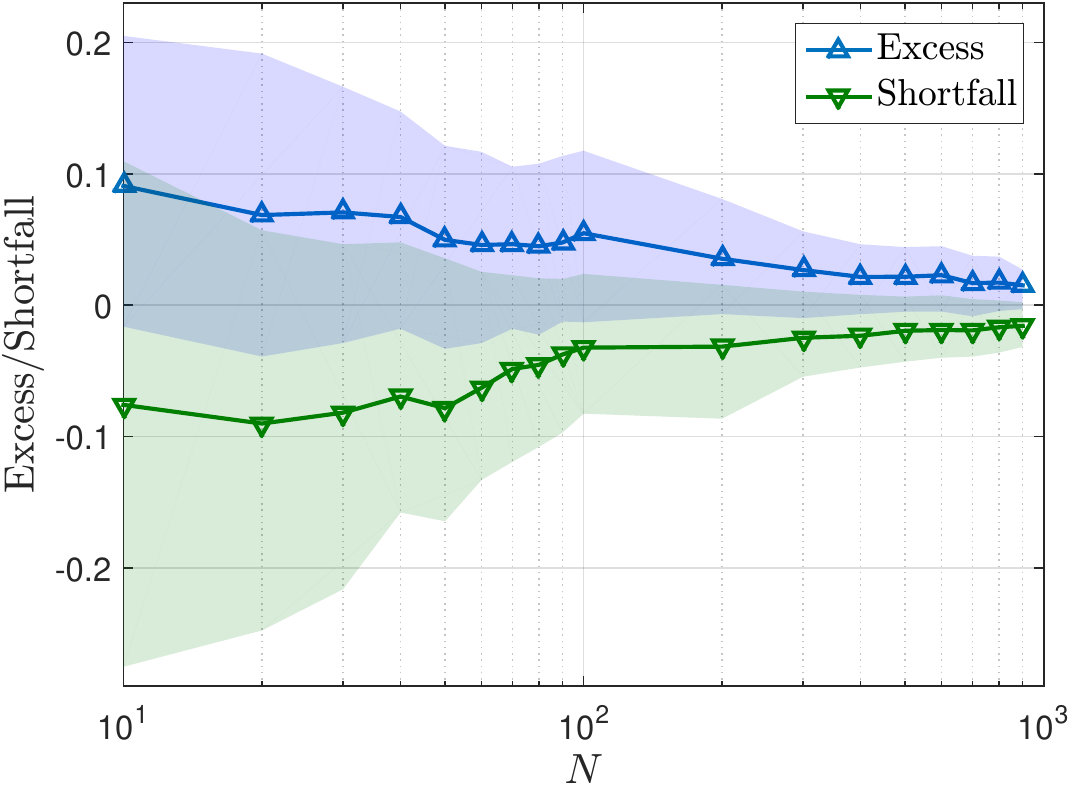}} \quad
	\subfigure[Data-driven Wasserstein radius $\wh\eps_N^{\rm \; cv}$ obtained via $k$-fold cross validation]{\label{fig:UQ_eps} \includegraphics[width=0.49\columnwidth]{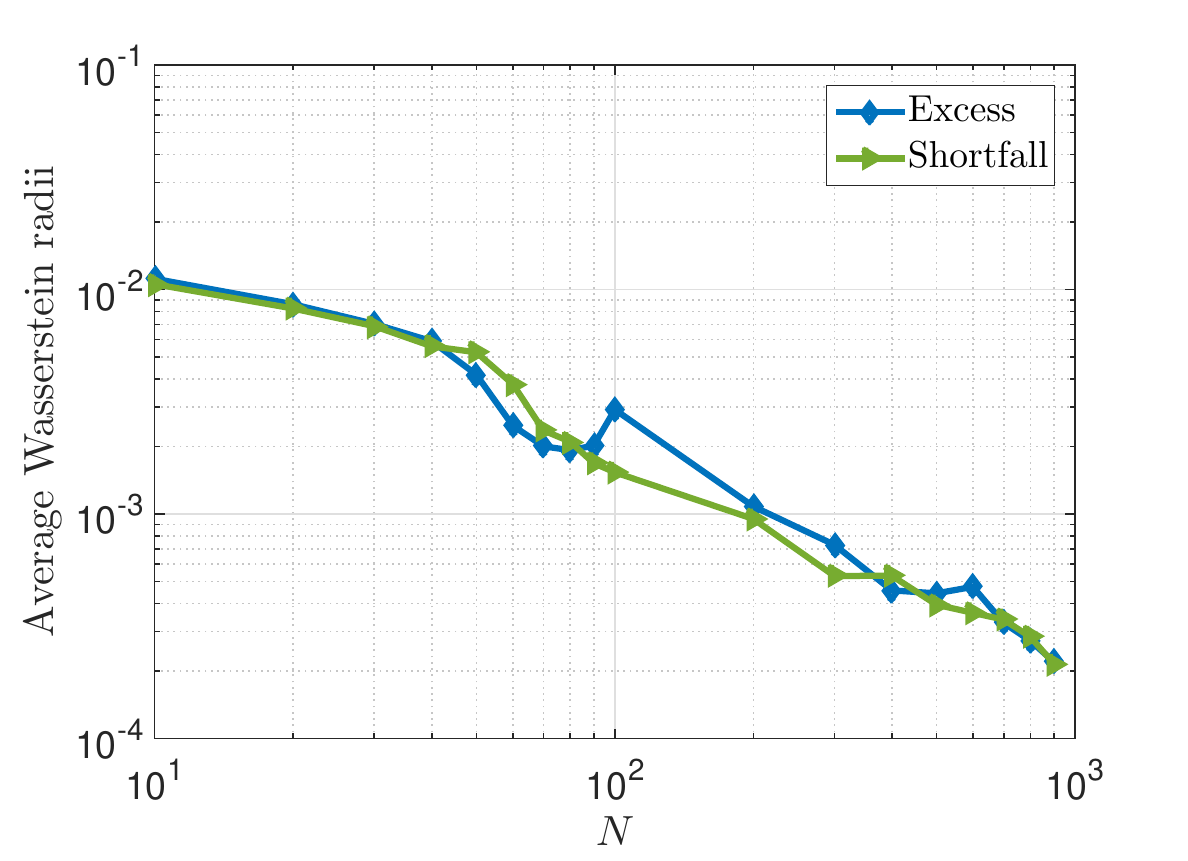}}
	\caption{Dependence of the confidence bounds and the Wasserstein radius on $N$}
	\label{fig:UQ_learning}
\end{figure*}

Figure~\ref{fig:UQ_eps} shows the Wasserstein radius $\wh\eps_N^{\rm \; cv}$ obtained via $k$-fold cross validation (both for $\Jdd^+$ and $\Jdd^-$). As usual, all results are averaged across 300 independent simulation runs. A comparison with Figure~\ref{fig:eps:N} reveals that the data-driven Wasserstein radii in uncertainty quantification display a similar but faster polynomial decay than in portfolio optimization. We conjecture that this is due to the absence of decisions, which implies that uncertainty quantification is less susceptible to the optimizer's curse. Thus, nature ({\em i.e.}, the fictitious adversary choosing the distribution in the ambiguity set) only has to compensate for noise but not for bias. A smaller Wasserstein radius seems to be sufficient for this purpose.

\paragraph*{\bf Acknowledgments} We thank Soroosh Shafieezadeh Abadeh for helping us with the numerical experiments. The authors are grateful to Vishal Gupta, Ruiwei Jiang and Nathan Kallus for their valuable comments. This research was supported by the Swiss National Science Foundation under Grant BSCGI0\_157733.

\appendix

\section{}\label{app}
The following technical lemma on the pointwise approximation of an upper semicontinuous function by a non-increasing sequence of Lipschitz continuous majorants strengthens~\cite[Theorem 4.2]{ref:Mash-09}, which focuses on bounded domains and continuous (but not necessarily Lipschitz continuous) majorants.

\begin{Lem}
	\label{lem:p.w.app}
	If $h:\Xi \ra \R$ is upper semicontinuous and satisfies $h(\xi) \le L(1+\|\xi\|)$ for some $L\geq 0$, then there exists a non-increasing sequence of Lipschitz continuous functions that converge pointwise to~$h$ on~$\Xi$. 
\end{Lem}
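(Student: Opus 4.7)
The plan is to construct the required sequence via the Pasch--Hausdorff (inf--convolution) envelope. For each $k\in\N$ with $k\geq L$, define
\[
 h_k(\xi) \Let \sup_{\eta \in \Xi} \Big\{ h(\eta) - k\|\eta - \xi\|\Big\}.
\]
Setting $\eta=\xi$ inside the supremum shows $h_k(\xi)\geq h(\xi)$, while the growth condition yields
\[
 h(\eta)-k\|\eta-\xi\|\;\leq\; L(1+\|\xi\|) + (L-k)\|\eta-\xi\|,
\]
which is bounded above by $L(1+\|\xi\|)<\infty$ whenever $k\geq L$. Thus $h_k$ is real-valued. A standard argument using the triangle inequality inside the supremum shows that $h_k$ is $k$-Lipschitz (for any fixed norm on $\R^m$), and the sequence is non-increasing in $k$ because enlarging $k$ makes the subtracted penalty larger pointwise.

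The main step is to establish $h_k(\xi)\downarrow h(\xi)$ for every $\xi\in\Xi$. Fix $\xi$ and $\eps>0$. By upper semicontinuity there exists $\delta>0$ such that $h(\eta)\leq h(\xi)+\eps$ for all $\eta\in\Xi$ with $\|\eta-\xi\|<\delta$. Split the supremum defining $h_k(\xi)$ into a ``near'' part over $\{\eta\in\Xi:\|\eta-\xi\|<\delta\}$ and a ``far'' part over $\{\eta\in\Xi:\|\eta-\xi\|\geq\delta\}$. The near part is bounded above by $h(\xi)+\eps$ (dropping the non-positive penalty). For the far part and $k>L$,
\[
 h(\eta)-k\|\eta-\xi\|\;\leq\; L(1+\|\xi\|)+(L-k)\|\eta-\xi\|\;\leq\; L(1+\|\xi\|)-(k-L)\delta,
\]
since the right-hand side is decreasing in $\|\eta-\xi\|$. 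This last expression falls below $h(\xi)+\eps$ as soon as $k\geq L+\delta^{-1}\big(L(1+\|\xi\|)-h(\xi)-\eps\big)$. Combining both parts gives $h_k(\xi)\leq h(\xi)+\eps$ for all sufficiently large $k$, and together with $h_k(\xi)\geq h(\xi)$ this yields the desired pointwise convergence.

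The only delicate point is ensuring finiteness of $h_k$ and controlling the ``far'' contribution, both of which rely crucially on the linear growth hypothesis $h(\xi)\leq L(1+\|\xi\|)$; without it, the supremum could be $+\infty$ and the argument that the penalty eventually dominates would fail. Everything else (Lipschitz continuity, monotonicity, majorization) is routine. Hence the sequence $\{h_k\}_{k\geq L}$ meets all requirements of the lemma.
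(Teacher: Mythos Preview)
Your proof is correct and uses the same Pasch--Hausdorff envelope construction as the paper (the paper takes penalty $kL\|\cdot\|$ instead of $k\|\cdot\|$, a cosmetic difference). The only variation is in verifying pointwise convergence: the paper extracts near-maximizers $\xi'_k$ and shows $\xi'_k\to\xi$ before invoking upper semicontinuity, whereas you use a near/far split of the supremum---both arguments are standard and equally valid.
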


\begin{proof}
	The proof is constructive. Define the functions  
	\begin{align*}
		h_k(\xi) = \sup_{\xi' \in \Xi} h(\xi') - kL\|\xi - \xi'\|, \quad k \in \N,
	\end{align*}
	where $L$ is the linear growth rate of~$h$. Note that by construction $h_k(\xi)\le L(1+\|\xi\|)$. As $\xi'=\xi$ is feasible in the maximization problem defining $h_k(\xi)$, we have $h_k(\xi) \ge h(\xi)$ for all $\xi\in\Xi$ and $k\in\N$. Moreover, $h_k(\xi)$ is Lipschitz continuous with Lipschitz constant~$kL$ (as $h_k(\xi)$ constitutes a supremum of norm functions with this property). %Using the triangle inequality, one can also show that the function $h_k$ is Lipschitz continuous with the constant $kL$, {\em i.e.}, $|h_k(x,\xi) - h_k(x,\xi')|\le kL\|\xi-\xi'\|$ for all $\xi,\xi'$ in $\Xi$. 
	Given any $\xi \in \Xi$, it remains to be shown that $\lim_{k\ra \infty}h_k(\xi) = h(\xi)$. Thus, choose $\xi'_k \in \Xi$ with 
	\begin{align*}
		h_k(\xi) = \sup_{\xi' \in \Xi} h(\xi') - kL\|\xi - \xi'\| \le h(\xi'_k) - kL\|\xi - \xi'_k\| + {1\over k}.
	\end{align*}
	We first show that $\xi_k$ converges to $\xi$ as $k$ tends to $\infty$. Indeed, we have 
	\begin{align*} 
		h(\xi) &\le h_k(\xi)  \le h(\xi'_k) - kL\|\xi - \xi'_k\| + {1\over k} \le L(1+\|\xi'_k\|) - kL\|\xi - \xi'_k\| + {1\over k}\\
		&  \le L(1+\|\xi - \xi'_k\| + \|\xi\|) - kL\|\xi - \xi'_k\| + {1\over k} = L(1 + \|\xi\|) + {1\over k} - (k-1)  	L\|\xi - \xi'_k\|,
	\end{align*} 
	which implies
	\[
		\|\xi - \xi'_k\|\leq  {1\over L(k-1)} \left( h(\xi) - L(1 + \|\xi\|) -  {1\over k}\right),
	\]
	that is, $\|\xi - \xi'_k\| \ra 0$ as $k \ra \infty$. Therefore, we find
	\begin{align*}
			h(\xi) &\le \lim_{k\ra \infty} h_k(\xi) \le \limsup_{k\ra \infty} h(\xi'_k) - kL\|\xi - \xi'_k\| + {1\over k} \le \limsup_{k\ra \infty} h(\xi'_k) \le h(\xi),
	\end{align*}
	where the last inequality is due to the upper semicontinuity of $h$. This concludes the proof.
\end{proof}

%===============================================================================
	\bibliographystyle{siam}
	%\bibliography{../../Ref/ref_mine,../../Ref/ref_Opt,../../Ref/ref_SOC}
	\bibliography{ref}
%===============================================================================
\end{document}